\documentclass[12pt,a4paper]{article}
\usepackage{amsmath}
\usepackage{amsfonts}
\usepackage{amssymb}
\usepackage{enumitem}
\usepackage{amsthm}
\usepackage{tikz}
\usepackage{xcolor}
\usepackage{graphicx, subcaption}

\usepackage[english]{babel}
\usepackage[left=1cm,right=1cm,top=2cm,bottom=2cm]{geometry}
\author{PERLA EL KETTANI$^+$, TADAHISA FUNAKI$^\ast$, 
DANIELLE HILHORST$^\%$,\\ HYUNJOON PARK$^\dagger$,
AND SUNDER SETHURAMAN$^\diamond$}
\title{Singular limit of an Allen-Cahn equation with nonlinear diffusion}

\newtheorem{thm}{Theorem}[section]
\newtheorem{lem}{Lemma}
\newtheorem{rmk}{Remark}

\def\R{{\mathbb{ R}}}
\def\e{\varepsilon}
\def\vp{\varphi}

\renewcommand{\a}{\alpha}

\newcommand{\de}{\delta}
\newcommand{\fa}{\varphi}

\newcommand{\la}{\lambda}

\newcommand{\si}{\sigma}

\def\zn{z^{(N)}}

\def\wi{{w}}
\def\wic{{w^\chi}}
\def\twi{{{w}_\infty}}
\def\twic{{{w}^{\chi^*}_\infty}}

\definecolor{darkgreen}{rgb}{0,0.5,0}

\begin{document}

\maketitle
\begin{abstract}
{
We consider an Allen-Cahn equation with nonlinear diffusion, motivated by the study
of the scaling limit of certain interacting particle systems. We investigate
its singular limit and show the generation and propagation of an interface in the limit.  
The evolution of this limit interface is governed by mean curvature flow with 
a novel, homogenized speed in terms of a surface tension-mobility parameter emerging
from the nonlinearity in our equation. 
}
\end{abstract}

\footnote{ \hskip -6.5mm
{
$^+$Aix Marseille University, Toulon Univeristy, 
Laboratory Centre de Physique Théorique, CNRS,  Marseille, France. \\
e-mail: Kettaneh.perla@hotmail.com \\
$^\ast$Department of Mathematics,
Waseda University,
3-4-1 Okubo, Shinjuku-ku,
Tokyo 169-8555, Japan. \\
e-mail: funaki@ms.u-tokyo.ac.jp \\
$^\%$CNRS and Laboratoire de Math\'ematiques, University Paris-Saclay, 
Orsay Cedex 91405,  France. \\
e-mail: Danielle.Hilhorst@math.u-psud.fr \\
$^\dagger$Department of Mathematical Sciences, 
Korean Advanced Institute of Science and Technology, 
 291 Daehak-ro, Yuseong-gu, Daejeon 34141, Korea.
e-mail: hyunjoonps@gmail.com\\
$^\diamond$Department of Mathematics,
University of Arizona,
621 N.\ Santa Rita Ave.,
Tucson, AZ 85750, USA. \\
e-mail: sethuram@math.arizona.edu
}
}

\thanks{MSC 2020: 
35K57, 35B40.

\thanks{keywords: 
Allen-Cahn equation, Mean curvature flow, Singular limit, Nonlinear diffusion,
Interface, Surface tension}

\section{Introduction}

The Allen-Cahn equation with linear diffusion
\begin{align*}
	u_t = \Delta u - \dfrac{1}{\e^2} F'(u)
\end{align*}
was introduced to understand the phase separation phenomena which appears in the construction of polycrystalline materials \cite{AC1979}. Here, $u$ stands for the order parameter which describes the state of the material, $F$ is a double-well potential with two distinct local minima $\alpha_\pm$ at two different phases, and the parameter $\e > 0$ corresponds to the interface width in the phase separation process. When $\e$ is small, it is expected that $u$ converges to either of the two states $u = \alpha_+$ and $u = \alpha_-$. Thus, the limit $\e \downarrow 0$ creates a steep interface dividing two phases; this coincides with the phase separation phenomena and the limiting interface is known to evolve according to mean curvature flow; see \cite{AHM2008, Xinfu1990}.

In this paper,
we prove generation and propagation of interface properties for an Allen-Cahn equation with nondegenerate nonlinear diffusion. More precisely, we study the problem
\begin{align*}
(P^\e)~~
	\begin{cases}
	u_t
	= \Delta \vp(u)
	+ \displaystyle{ \frac{1}{\e^2}} f(u)
	&\mbox{ in } D \times \mathbb{R}^+\\
	\displaystyle{ \frac{\partial \vp(u)}{\partial \nu} }
	= 0
	&\mbox{ in } \partial D \times \mathbb{R}^+\\
	u(x,0) = u_0(x)
	&\text{ for } x \in D
	\end{cases}
	\end{align*}
where the unknown function $u$ denotes a phase function, $D$ is a smooth bounded domain in $\mathbb{R}^N, N \geq 2$, $\nu$ is the outward unit normal vector to the boundary $\partial D$ and $\e > 0$ is a small parameter. The nonlinear functions $\vp$ and $f$ satisfy the following properties.  

We assume that $f$ has exactly three zeros $f(\alpha_-) = f(\alpha_+) = f(\alpha) = 0$ where $\alpha_- < \alpha < \alpha_+$, and 
\begin{align}\label{cond_f_bistable}
	f \in C^2(\mathbb{R})
	,~
	f'(\alpha_-) < 0
	,~ f'(\alpha_+) < 0
	,~ f'(\alpha) > 0
\end{align}
so that
\begin{align}\label{cond_f_tech}
	f(s) > 0
	~
	\text{for}
	~
	s < \alpha_-
	,~
	f(s) < 0
	~
	\text{for}
	~
	s > \alpha_+.
\end{align}
We suppose that 
\begin{align}\label{cond_phi'_bounded}
	\vp \in C^4(\mathbb{R}), ~~ \vp' \geq C_\vp
\end{align}
for some positive constant $C_\vp $. We impose one more relation between $f$ and $\vp$, namely  
\begin{align}\label{cond_fphi_equipotential}
	\int_{\alpha_-}^{\alpha_+}
	\vp'(s) f(s) ds
	= 0.
\end{align}

\noindent  As for the initial condition $u_0(x)$ we assume that $u_0 \in C^2(\overline{D})$. Throughout the paper, we define $C_0$ and $C_1$ as follows:
\begin{align}
	C_0 
	&:= || u_0 || _{C^0 \left( \overline{D} \right)}
	+ || \nabla u_0 || _{C^0 \left( \overline{D} \right)}
	+ || \Delta u_0 || _{C^0 \left( \overline{D} \right)}\label{cond_C0}
	\\
	C_1 
	&:= 
	\max_{|s - \alpha| \leq I} \vp(s) +
	\max_{|s - \alpha| \leq I} \vp'(s) +
	\max_{|s - \alpha| \leq I} \vp''(s)
	,~~
	I = C_0 + \max(\alpha - \alpha_-, \alpha_+ - \alpha).\label{cond_C1}
\end{align}
Furthermore, we define $\Gamma_0$ by
\begin{align*}
	\Gamma_0 
	:= 
	\{
		x \in D: u_0(x) = \alpha
	\}.
\end{align*}
In addition, we suppose $\Gamma_0$  is a $C^{4+\nu}, 0 < \nu < 1$ hypersurface without boundary such that 
\begin{align}
	\Gamma_0 \Subset	D, \nabla u_0(x) \cdot n(x) \neq 0 \text{ if}~ x \in \Gamma_0 \label{cond_gamma0_normal} \\
		u_0 > \alpha \text{ in } D_0^+, ~~~~~~ u_0 < \alpha \text{ in } D_0^-, \label{cond_u0_inout}
\end{align}
where $D_0^-$ denotes the region enclosed by $\Gamma_0$, $D_0^+$ is the region enclosed between $\partial D$ and $\Gamma_0$, and $n$ is the outward normal vector to $D_0^-$.
It is standard that the above formulation, referred to as Problem $(P^\e)$, possesses a unique classical solution $u^\e$.

The present paper is originally motivated by the study of the scaling limit of a
Glauber+Zero-range particle system.  In this microscopic system of interacting random walks, the Zero-range part governs the rates of jumps, while the Glauber part prescribes
creation and annihilation rates of the particles.  In a companion paper \cite{EFHPS}, we show that
the system exhibits a phase separation and, under a certain space-time scaling limit, an interface
arises, in the limit macroscopic density field
of particles, evolving in time according to the motion by mean curvature.  The system is indeed well 
approximated from macroscopic viewpoint by the Allen-Cahn equation with nonlinear diffusion 
$(P^\e)$, or more precisely by its discretized equation.  Although, in this paper, we study
$(P^\e)$ under the Neumann boundary conditions, the formulation under periodic boundary conditions,
used in the particle system setting in \cite{EFHPS}, can be treated similarly; see Remark \ref{rem:1} below.

In some other physical situations, it is expected that the diffusion can depend on the order parameter
as in our case.
 In the experimental article \cite{Wagner1952}, Wagner suggested that for metal alloys the diffusion depends on the concentration. In \cite{MD2010, RLA1999}, the authors considered degenerate diffusions such as porous medium diffusions instead of linear diffusions. In \cite{FL1994}, Fife and Lacey generalized the Allen-Cahn equation, which leads them to a parameter dependent diffusion Allen-Cahn equation. Recently, \cite{FHLR2020} considered an Allen-Cahn equation with density dependent diffusion in $1$ space dimension and showed a slow motion property. However, no rigorous proof on the motion of the interface in the nonlinear diffusion context has been given for larger space dimensions $N\geq 2$.

In this context, the purpose of this article is to study the singular limit of $u^\e$ as 
 $\e \downarrow 0$.
 We first present a result on the generation of the interface.  We use the following notation:
\begin{align}\label{cond_mu_eta0}
	\mu = f'(\alpha)
	, ~~ 
	t^\e = \mu^{-1} \e^2 |\ln \e|
	, ~~
	\eta_0 = \min(\alpha - \alpha_-, \alpha_+ - \alpha).
\end{align}

\begin{thm}\label{Thm_Generation}
Let $u^\e$ be the solution of the problem $(P^\e)$, $\eta$ be an arbitrary constant satisfying $0 < \eta < \eta_0$. Then, there exist positive constants $\e_0$ and $M_0$ such that, for all $\e \in (0, \e_0)$, the following holds: 

\begin{enumerate}[label = (\roman*)]
\item for all $x \in D$
\begin{align}\label{Thm_generation_i}
	\alpha_- - \eta
	\leq
	u^\e(x,t^\e)
	\leq
	\alpha_+ + \eta;
\end{align}

\item if $u_0(x) \geq \alpha +  M_0 \e$, then
\begin{align}\label{Thm_generation_ii}
	u^\e(x,t^\e) \geq \alpha_+ - \eta;
\end{align}

\item if $u_0(x) \leq \alpha  - M_0 \e$, then
\begin{align}\label{Thm_generation_iii}
	u^\e(x,t^\e) \leq \alpha_- + \eta.
\end{align}

\end{enumerate}

\end{thm}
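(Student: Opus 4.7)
The plan is to prove all three statements by comparison with explicit sub- and supersolutions built from the spatially frozen reaction ODE. Let $Y(\tau,\xi)$ denote the solution of
\[
Y_\tau = f(Y), \qquad Y(0,\xi) = \xi,
\]
and set
\[
w^\pm(x,t) \;=\; Y\!\left(\frac{t}{\e^2},\;u_0(x)\pm \e^2 A\bigl(e^{\mu t/\e^2}-1\bigr)\right),
\]
where $A>0$ is a constant to be chosen below. Placing the shift \emph{inside} the $\xi$-argument of $Y$ produces the crucial identity $f(w^\pm)=f(Y)=Y_\tau$, which exactly cancels the singular $\e^{-2}f$ reaction term when the operator of $(P^\e)$ is applied to $w^\pm$.

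First I would record the properties of $Y$ needed below. Differentiating $Y_\tau=f(Y)$ in $\xi$ yields $Y_\xi = \exp\!\int_0^\tau f'(Y)\,ds>0$, and the substitution $ds=dy/f(y)$ along each orbit gives the sharper representation $Y_\xi(\tau,\xi) = f(Y(\tau,\xi))/f(\xi)$. A direct analysis using this identity shows that, uniformly on $|\xi|\leq C_0 + A$ and $0\leq\tau\leq \mu^{-1}|\ln\e|$, one has $0< Y_\xi\leq C e^{\mu\tau}$ and, after one further differentiation, $|Y_{\xi\xi}|\leq C\, Y_\xi\, e^{\mu\tau}$, the constant $C$ depending only on $f$ and $C_0$. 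On the qualitative side, the quadrature $\tau=\int_\xi^Y dy/f(y)$ yields two facts: (a) for every $\eta>0$ and $K<\infty$, $|Y(\tau,\xi)-\alpha_\pm|\leq\eta/2$ whenever $\pm(\xi-\alpha)>0$, $|\xi|\leq K$, and $\tau$ is sufficiently large; and (b) there exists $M_0>0$ such that for all small $\e$, $Y(\mu^{-1}|\ln\e|,\xi)\geq \alpha_+ -\eta$ when $\xi\geq\alpha+M_0\e$, and $Y(\mu^{-1}|\ln\e|,\xi)\leq\alpha_- +\eta$ when $\xi\leq\alpha-M_0\e$.

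The core computation is the supersolution property. Writing $\tau=t/\e^2$, $\nabla w^+=Y_\xi\nabla u_0$, $\Delta w^+=Y_\xi\Delta u_0+Y_{\xi\xi}|\nabla u_0|^2$, a direct expansion gives
\[
L^\e[w^+] \;:=\; w^+_t-\Delta\vp(w^+)-\frac{1}{\e^2}f(w^+) \;=\; A\mu\, Y_\xi\, e^{\mu\tau} - \vp'(w^+)\!\bigl[Y_\xi\Delta u_0+Y_{\xi\xi}|\nabla u_0|^2\bigr] - \vp''(w^+)\,Y_\xi^{2}|\nabla u_0|^2,
\]
the term $\e^{-2}[f(Y)-f(w^+)]$ vanishing identically. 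Since $Y_\xi>0$, the condition $L^\e[w^+]\geq 0$ is equivalent, after dividing by $Y_\xi$, to
\[
A\mu\, e^{\mu\tau} \;\geq\; \vp'(w^+)\Delta u_0 + \vp'(w^+)\frac{Y_{\xi\xi}}{Y_\xi}|\nabla u_0|^2 + \vp''(w^+)\,Y_\xi\,|\nabla u_0|^2,
\]
and by the bounds from the previous paragraph together with the constants $C_0,C_1$ of \eqref{cond_C0}--\eqref{cond_C1}, the right-hand side is bounded in absolute value by $K(C_0,C_1)(1+e^{\mu\tau})$. Choosing $A$ sufficiently large therefore yields $L^\e[w^+]\geq 0$; the symmetric argument gives $L^\e[w^-]\leq 0$. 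Since $w^\pm(\cdot,0)=u_0$ and $\nabla w^\pm\cdot\nu = Y_\xi\,\nabla u_0\cdot\nu$ inherits the Neumann condition from $u_0$ (assuming compatibility of the initial datum, or after a standard boundary-layer modification near $\partial D$), the comparison principle for the uniformly parabolic equation $(P^\e)$, guaranteed by \eqref{cond_phi'_bounded}, gives $w^-\leq u^\e\leq w^+$ on $\overline{D}\times[0,t^\e]$. Evaluated at $t=t^\e$ the interior shift equals $\pm A\e(1-\e)$, and statements (i)--(iii) follow from facts (a) and (b) upon enlarging $M_0$ to absorb the $A\e$ correction.

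In my view the principal obstacle is the handling of the new term $\vp''(w^\pm)|\nabla w^\pm|^2 = \vp''(w^\pm)\, Y_\xi^{2}\,|\nabla u_0|^2$, which is absent from the linear-diffusion Allen--Cahn analysis. Its nominal size $Y_\xi^2\sim e^{2\mu\tau}$ exactly matches the help term $A\mu Y_\xi e^{\mu\tau}\sim e^{2\mu\tau}$, so the estimate succeeds but only marginally; it relies essentially on using the sharp representation $Y_\xi=f(Y)/f(\xi)$ to obtain the correct exponential rate $\mu$, and on placing the perturbation inside the $\xi$-slot of $Y$ so that the cancellation $f(w^\pm)=f(Y)$ holds exactly.
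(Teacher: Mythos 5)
Your proposal follows essentially the same route as the paper: identical sub-/super-solutions $w^\pm(x,t)=Y\bigl(t/\e^2,\,u_0(x)\pm\e^2 A(e^{\mu t/\e^2}-1)\bigr)$ (the paper's $C_2$ is your $A$), the same exact cancellation of the singular $\e^{-2}f$ term, the same bounds $Y_\xi\le Ce^{\mu\tau}$ and $|Y_{\xi\xi}/Y_\xi|\le Ce^{\mu\tau}$, the same treatment of the extra $\vp''(w^\pm)Y_\xi^2|\nabla u_0|^2$ term after dividing out $Y_\xi$, and then the comparison principle followed by the ODE lemma at $\tau=\mu^{-1}|\ln\e|$. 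One small imprecision: your fact (a) --- closeness of $Y(\tau,\xi)$ to $\alpha_\pm$ for $\tau$ large --- cannot hold uniformly in $\xi$ for $\xi$ approaching $\alpha$; what is actually used (and is what the paper's Lemma on the ODE supplies) is the weaker but uniform-in-$\xi$ containment $Y(\mu^{-1}|\ln\e|,\xi)\in[\alpha_- -\eta,\,\alpha_+ +\eta]$ for all $\xi$ in a bounded interval, which suffices for part (i).
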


After the interface has been generated, the diffusion term has the same order as the reaction term. As a result the interface starts to propagate. Later, we will prove that the interface moves according to the following motion equation:
\begin{align}\label{eqn_motioneqn}
	(IP)
	\begin{cases}
		V_n = - (N - 1) \lambda_0 \kappa
		& 
		\text{ on } \Gamma_t
		\\
		\Gamma_t|_{t = 0} = \Gamma_0,
		&
		~~
	\end{cases}
\end{align}
where $\Gamma_t$ is the interface at time $t > 0$, $V_n$ is the normal velocity on the interface, $\kappa$ denotes its mean curvature, and $\lambda_0$ is a positive  constant which will be defined later (see \eqref{eqn_lambda0} and {\eqref{second lambda_0}}). It is well known that Problem $(IP)$ possesses locally in time a unique smooth solution. Fix $T > 0$ such that the solution of $(IP)$, in \eqref{eqn_motioneqn}, exists in $[0,T]$ and denote the solution by $\Gamma = \cup_{0\leq t < T} (\Gamma_t \times \{t\})$. From Proposition 2.1 of \cite{Xinfu1990} such a $T > 0$ exists, and one can deduce that $\Gamma \in C^{4 + \nu, \frac{4 + \nu}{2}}$ in $[0,T]$, given that $\Gamma_0 \in C^{4 + \nu}$. 

 The second main theorem states a result on the generation and the propagation of the interface.

\begin{thm}\label{Thm_Propagation}
	Under the conditions given in Theorem \ref{Thm_Generation}, for any given $0 < \eta < \eta_0$ there exist $\e_0 > 0$ and $C > 0$ such that 
\begin{align}\label{thm_propagation_1}
	u^\e
	\in
	\begin{cases}
		[\alpha_- - \eta, \alpha_+ + \eta]
		&
		\text{ for } x \in  D
		\\
		[\alpha_+ - \eta, \alpha_+ + \eta]
		&
		\text{ if } x \in D^+_t \setminus\mathcal{N}_{C\e}(\Gamma_t)
		\\
		[\alpha_- - \eta, \alpha_- + \eta]
		&
		\text{ if }  x \in D^-_t \setminus\mathcal{N}_{C\e}(\Gamma_t)
	\end{cases}
\end{align}
for all $\e \in (0, \e_0)$ and $t \in [t^\e,T]$, where  $D_t^-$ denotes the 
region enclosed by $\Gamma_t$, $D_t^+$ is that enclosed between $\partial D$ and 
$\Gamma_t$, and 
$\mathcal{N}_r(\Gamma_t) := \{ x \in D, dist(x, \Gamma_t) < r \}$.

\end{thm}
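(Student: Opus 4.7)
The plan is to combine Theorem \ref{Thm_Generation} with a comparison argument built on carefully chosen sub- and super-solutions. Because $\varphi' \geq C_\varphi > 0$ renders the diffusion uniformly elliptic, $(P^\e)$ satisfies the classical parabolic comparison principle, so it suffices to sandwich $u^\e$ between functions $u^\pm_\e$ that reproduce the interface profile for $t \in [t^\e, T]$. The first ingredient is the standing wave $U_0 = U_0(z)$ solving
\[
(\varphi(U_0))_{zz} + f(U_0) = 0, \qquad U_0(\pm \infty) = \alpha_\pm, \qquad U_0(0) = \alpha.
\]
The equipotential condition \eqref{cond_fphi_equipotential} is exactly what forces the wave speed to vanish, and \eqref{cond_f_bistable} yields exponential convergence of $U_0 - \alpha_\pm$ and of $U_0'$ at $\pm\infty$.

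Next, let $\tilde d(x,t)$ be a smooth extension of the signed distance to $\Gamma_t$ (positive on $D_t^+$, negative on $D_t^-$, cut off outside a fixed tubular neighborhood). I would use the ansatz
\[
u^\pm_\e(x,t) = U_0\!\left(\frac{\tilde d(x,t) \pm \e p(t)}{\e}\right) \pm q(t),
\]
with corrections $p(t), q(t) = O(\e)$ absorbing curvature and lower-order errors. Substituting into the PDE, the $O(\e^{-2})$ terms vanish by the profile equation; at order $O(\e^{-1})$ the Fredholm-type solvability condition (multiplying by $U_0'$ and integrating in $z$) reads
\[
V_n \int_\R (U_0')^2 \, dz + \Delta \tilde d \int_\R \varphi'(U_0)(U_0')^2 \, dz = 0.
\]
Since $\Delta \tilde d = (N-1)\kappa$ on $\Gamma_t$ and $\tilde d_t = -V_n$ there, this reproduces $V_n = -(N-1)\lambda_0 \kappa$ with
\[
\lambda_0 = \frac{\int_\R \varphi'(U_0)(U_0')^2 \, dz}{\int_\R (U_0')^2 \, dz} > 0,
\]
the effective homogenized mobility induced by the nonlinear diffusion.

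The remaining steps are standard in outline: (a) verify that $u^\pm_\e$ are actually super/sub-solutions on $[t^\e, T]$ by prescribing an ODE for $q(t)$ whose exponential contraction, coming from $f'(\alpha_\pm) < 0$, damps the $O(1)$ defect; (b) use Theorem \ref{Thm_Generation} at time $t^\e$, selecting $M_0$ and the initial values $p(t^\e), q(t^\e)$ so that $u^-_\e(\cdot,t^\e) \leq u^\e(\cdot,t^\e) \leq u^+_\e(\cdot,t^\e)$ (the nondegeneracy \eqref{cond_gamma0_normal} ensures that the set where $|u_0 - \alpha| < M_0 \e$ is already contained in an $O(\e)$ neighborhood of $\Gamma_0$); (c) apply the comparison principle; (d) read \eqref{thm_propagation_1} off the exponential decay of $U_0(z) - \alpha_\pm$ as $z \to \pm\infty$.

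The hard part will be (a): the quasilinear structure produces remainder terms involving $\varphi''(U_0)$, the defect $|\nabla \tilde d|^2 - 1$, and higher derivatives of $\tilde d$ that are only small inside a tubular neighborhood of $\Gamma_t$. Controlling these uniformly in $\e$ requires either the spectral gap of the linearization $\partial_{zz}(\varphi'(U_0)\,\cdot\,) + f'(U_0)$ around the profile, or a careful choice of $q(t)$ that absorbs the principal defects, while simultaneously ensuring the ansatz respects the Neumann condition $\partial_\nu \varphi(u) = 0$ on $\partial D$ through the cut-off of $\tilde d$. This is the quasilinear adaptation of the Alfaro--Hilhorst--Matano/Chen framework, and it is here that the analysis departs substantively from the semilinear case.
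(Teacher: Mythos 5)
Your plan captures the broad strategy of the paper (sub/super-solutions built from the profile $U_0$, matched at $t^\e$ to the output of Theorem~\ref{Thm_Generation}, then pushed forward by comparison), but two of its key steps are wrong, and both errors come from not accounting for the fact that the linearization of the quasilinear operator around $U_0$ is \emph{not} self-adjoint.

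First, your solvability condition tests the $O(\e^{-1})$ residual against $U_{0z}$. That is only correct when the linearized operator
$L[v] = (\vp'(U_0)v)_{zz} + f'(U_0)v$
is self-adjoint, i.e.\ when $\vp'\equiv\text{const}$. Here $L^*[w]=\vp'(U_0)w_{zz}+f'(U_0)w$, and its kernel is spanned by $(\vp(U_0))_z=\vp'(U_0)U_{0z}$, not by $U_{0z}$. Testing the residual $\overline d_t U_{0z}-(\vp(U_0))_z\De\overline d$ against $\vp'(U_0)U_{0z}$ gives the paper's
\[
\la_0=\frac{\int_\R (\vp'(U_0)U_{0z})^2\,dz}{\int_\R \vp'(U_0)U_{0z}^2\,dz},
\]
whereas your computation gives $\int_\R \vp'(U_0)U_{0z}^2\,dz\big/\int_\R U_{0z}^2\,dz$; these coincide only when $\vp'$ is constant on the range of $U_0$. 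Since the theorem refers to $\Ga_t$ evolving by the paper's $\la_0$ (see \eqref{eqn_lambda0} and \eqref{second lambda_0}), a barrier built around the wrong $\la_0$ would track a different interface and the comparison argument would not yield \eqref{thm_propagation_1}.

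Second, your ansatz $u^\pm_\e=U_0(\tfrac{\tilde d\pm\e p}{\e})\pm q(t)$ omits the first-order corrector. After substitution, the $O(\e^{-1})$ residual is $\tfrac{U_{0z}}{\e}\big[\tilde d_t-\vp'(U_0)\De\tilde d\big]$. Even with $\tilde d_t=\la_0\De\tilde d$ exactly on $\Ga_t$, this equals $\tfrac{U_{0z}\De\tilde d}{\e}\big[\la_0-\vp'(U_0(z))\big]$, which is a genuine $O(1/\e)$ term whose sign varies in $z$ (because $\vp'(U_0(z))$ is not constant) and which also depends on $x$ through $\De\tilde d$. A spatially constant shift $q(t)$ cannot absorb it. This is why the paper includes the corrector $\e U_1(x,t,\cdot)$ (see \eqref{star0} and \eqref{eqn_U1_bar}), whose role is precisely to cancel the $(\vp(U_0))_z\De d$ piece so that the $O(\e^{-1})$ residual collapses to $\tfrac{U_{0z}}{\e}(d_t-\la_0\De d)$, which is then controllable by Lemma~\ref{Lem_d_bound}(ii). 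In the semilinear case $\vp(u)=u$ both issues disappear ($\la_0=1$ and $\vp'\equiv 1$), which is presumably why your outline feels consistent; but in the nonlinear-diffusion setting they are exactly where the analysis must depart from AHM, and your proposal does not yet handle either of them.
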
 
This theorem implies that, after generation, the interface propagates according to the motion $(IP)$ with a width of order $\mathcal{O}(\e)$. Note that Theorems \ref{Thm_Generation} and \ref{Thm_Propagation} extend similar results for linear diffusion Allen-Cahn equations due to \cite{AHM2008}. 

We now state an approximation result inspired by a similar result proved in \cite{MH2012}.

\begin{thm}\label{thm_asymvali}
\begin{enumerate}
\item[(i)]
Let the assumptions of Theorem \ref{Thm_Propagation} hold and $\rho > 1$. Then,  the solution $u^\e$ of $(P^\e)$ satisfies
\begin{align}\label{thm_asymvali_i}
	\lim_{\e \rightarrow 0} 
	\sup_{\rho t^\e \leq t \leq T,~x \in D}
	\left|
		u^\e(x,t) 
		- U_0
		\left(
			\dfrac{d^\e(x,t)}{\e} 
		\right)
	\right|
	= 0,
\end{align}
where $U_0$ is a standing wave solution defined in \eqref{eqn_AsymptExp_U0} and $d^\e$ denotes the signed distance function associated with $\Gamma_t^\e := \{ x \in D : u^\e(x,t) = \alpha \}$, defined as follows:
\begin{align*}
	d^\e(x,t)
	=
	\begin{cases}
		dist(x,\Gamma^\e_t)
		&\text{if}~~
		x \in D^{\e,+}_t
		\\
		- dist(x,\Gamma^\e_t)
		&\text{if}~~
		x \in D^{\e,-}_t
	\end{cases}
\end{align*}
where $D^{\e,-}_t$ denotes the region enclosed by $\Gamma^\e_t$ and $D^{\e,+}_t$ denotes the region enclosed between $\partial D$ and $\Gamma^\e_t$.

\item[(ii)]
For small enough $\e > 0$ and for any $t \in [\rho t_\e, T]$, $\Gamma^\e_t$ can be expressed as a graph over $\Gamma_t$.
\end{enumerate}
\end{thm}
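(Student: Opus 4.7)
My plan is to prove part (i) by a comparison argument with sub- and super-solutions built from the formal asymptotic expansion of $u^\e$ around the standing wave $U_0$, and to deduce part (ii) from the resulting uniform closeness together with a parabolic gradient estimate. The analysis can be localized to a thin tubular neighborhood of $\Gamma_t$: Theorem \ref{Thm_Propagation} already confines $\Gamma^\e_t$ inside $\mathcal{N}_{C\e}(\Gamma_t)$ and pins $u^\e$ near $\alpha_\pm$ outside the tube, where $U_0(d^\e/\e)$ is exponentially close to $\alpha_\pm$ as well. Inside the tube the signed distance $d(x,t)$ to $\Gamma_t$ is smooth with $|\nabla d|=1$, so the standard matched-asymptotic scheme in the fast variable $d/\e$ is available; I would extend $d$ smoothly to the rest of $D$ so that the ansatz below makes sense globally.

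For part (i) I take sub- and super-solutions of $(P^\e)$ of the form
\[
u^\pm_\e(x,t)=U_0\!\left(\frac{d(x,t)\pm \e^2 P(t)}{\e}\right)\pm\e Q(t),
\]
with $P,Q$ smooth positive functions to be chosen. Plugging into $\partial_t-\De\vp(\cdot)-\e^{-2}f(\cdot)$ and using the standing-wave ODE $(\vp(U_0))_{\xi\xi}+f(U_0)=0$, the identity $|\nabla d|=1$, and the motion law $V_n=-(N-1)\lambda_0\kappa$ satisfied by $\Gamma_t$, the contributions of orders $\e^{-2}$ and $\e^{-1}$ cancel; the equi-potential condition \eqref{cond_fphi_equipotential} is what guarantees that $U_0$ is a \emph{standing} rather than travelling wave and that the $O(\e^{-1})$ solvability condition reduces to exactly $V_n=-(N-1)\lambda_0\kappa$. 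The remaining $O(1)$ residual is absorbed by $\pm\e Q(t)$ near the interface and, outside the tube, by the strict stability $f'(\alpha_\pm)<0$ of the outer states. The initial ordering $u^-_\e\leq u^\e\leq u^+_\e$ at $t=\rho t^\e$ comes from Theorem \ref{Thm_Generation} together with the exponential decay of $U_0(\xi)-\alpha_\pm$ at $\pm\infty$ — the choice $\rho>1$ enters here so that the extra time $(\rho-1)t^\e$ gives the profile enough room to relax past the generation scale — and the uniform parabolicity from $\vp'\geq C_\vp$ lets the comparison principle propagate the ordering to $t=T$, producing $|u^\e(x,t)-U_0(d/\e)|\leq C\e$ uniformly on $[\rho t^\e,T]\times D$.

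To exchange $d$ for $d^\e$, I note that the bound above combined with $U_0'(0)>0$ forces $|d|\leq C'\e^2$ on $\Gamma^\e_t$ and $|d^\e|\leq C'\e^2$ on $\Gamma_t$ (the second via a one-step Taylor argument along normals, using $\partial_r u^\e\sim 1/\e$), so the Hausdorff distance between $\Gamma_t$ and $\Gamma^\e_t$ is $O(\e^2)$ and $|d-d^\e|=O(\e^2)$ uniformly; since $U_0'$ is bounded, this yields $|U_0(d/\e)-U_0(d^\e/\e)|=O(\e)$, which combined with the exponential closeness in the outer region proves (i). For part (ii) I use parabolic Schauder estimates for $\vp(u^\e)$ — uniformly elliptic thanks to $\vp'\geq C_\vp$ — to lift the $L^\infty$ closeness to a $C^1$ closeness in the tube, so that $\partial_r u^\e(y+r\nu(y),t)=\e^{-1}U_0'(d/\e)\nabla d\cdot\nu(y)+O(1)$ for $y\in\Gamma_t$ and $|r|\leq C\e$. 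Because $\nabla d\cdot\nu(y)=1$ on $\Gamma_t$, this derivative is strictly positive on each such normal, so $\{u^\e=\alpha\}$ meets it exactly once; combined with $\Gamma^\e_t\subset\mathcal{N}_{C\e}(\Gamma_t)$, this is the desired graph representation.

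The main obstacle will be verifying that the $O(1)$ residual left after the matched-asymptotic cancellations has the correct sign to be absorbed by $\pm\e Q(t)$: unlike the classical linear-diffusion Allen-Cahn, this residual mixes $(\vp(U_0))_{\xi\xi}$ rather than plain $U_0''$, the geometric identity $\Delta d+(N-1)\kappa=O(d)$, and the specific constant $\lambda_0$ built from the standing-wave profile, so one must track the $\vp$-dependent coefficients carefully and check their sign both in the transition layer (where $Q(t)$ provides the margin) and in the outer region (where the bistable reaction $f'(\alpha_\pm)<0$ takes over).
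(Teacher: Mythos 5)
Your plan takes a genuinely different route from the paper, but it has a gap that I believe is fatal, not just a technical one to be tracked carefully.

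The paper proves (i) by contradiction and compactness: assuming the supremum does not vanish, one rescales $u^{\e_k}$ around the orthogonal projection $p_k$ of the offending point $x_k$ onto $\Gamma_{t_k}$, uses Lemma \ref{lem_locregularity} to extract a $C^{2,1}_{loc}$-limit $w$ solving the eternal equation \eqref{eqn_entire}, and then invokes the Liouville-type rigidity of Lemma \ref{lem_entire} to conclude $w(z,\tau)=U_0(\zn-z^*)$. The contradiction then comes from a geometric argument (the tangent-ball argument around \eqref{eqn_asymproof_8}) that forces the rescaled blow-up profile to align with its own $\alpha$-level set, exactly matching $U_0(d^\e/\e)$. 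Your proposal instead tries to prove the stronger pointwise estimate $|u^\e - U_0(d/\e)| \leq C\e$ by sub/super solutions and then trade $d$ for $d^\e$.

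The problem is that $|u^\e - U_0(d/\e)| \leq C\e$ is too strong to be obtainable, and the intermediate claim $|d-d^\e|=O(\e^2)$ is almost certainly false. The propagation sub/super solutions in Section \ref{section_4} (see \eqref{star0}) have the form $U_0\big((d\pm\e p(t))/\e\big)+\e U_1 \pm q(t)$ with $p(t)=O(1)$, i.e.\ the argument of $U_0$ is shifted by an $O(1)$ quantity and the physical interface shift is $O(\e)$. This is forced: the geometric error $d_t - \lambda_0\Delta d = O(|d|)$ (Lemma \ref{Lem_d_bound}(ii)) produces, in the transition layer where $|d|=O(\e)$, a contribution $\tfrac{1}{\e}U_{0z}(d_t-\lambda_0\Delta d)=O(1)$ that must be dominated, and the only mechanism with the right sign is the $U_{0z}p_t$ term with $p_t=O(\e^{-2}e^{-\beta t/\e^2})+O(1)$; see the treatment of $E_2$ and $E_3$. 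Replacing $p(t)$ by $\e P(t)$ kills the $U_{0z}p_t$ margin at order $\e^{-1}$ and there is then no term to absorb the $O(\e^{-1})$ residual $(\vp(U_0))_z\Delta d/\e$ (which your ansatz also fails to cancel, since you dropped the $\e U_1$ corrector that equation \eqref{eqn_U1_bar} is specifically built to eliminate). The upshot is that $\Gamma_t^\e$ is only located to within $O(\e)$ of $\Gamma_t$ — exactly what Theorem \ref{Thm_Propagation} asserts and no better — so $|d-d^\e|=O(\e)$ and $|U_0(d/\e)-U_0(d^\e/\e)|$ can be $O(1)$ in the layer. This is precisely why the theorem is phrased with $U_0(d^\e/\e)$ rather than $U_0(d/\e)$: the statement is insensitive to $O(\e)$ tangential shifts of the interface, while your comparison argument is not. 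The paper's blow-up/Liouville approach is not an alternative to the comparison estimate — it is a way to prove the profile statement while sidestepping exactly this $O(\e)$ indeterminacy.

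Your "main obstacle" paragraph correctly flags that the residual has to be tracked carefully, but that is not the actual obstacle; the obstacle is that the $O(\e^2)$-precision claim on the interface location cannot hold. For part (ii) the Schauder-plus-monotonicity idea is reasonable and close in spirit to what \cite{MH2012} does (the paper defers the proof of (ii) to that reference), so that part would survive once (i) is reproved correctly.
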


\begin{rmk}  \label{rem:1}
	Theorems \ref{Thm_Generation}, \ref{Thm_Propagation} and \ref{thm_asymvali} hold not only for the Neumann boundary condition of Problem $(P^\e)$ but also for periodic boundary conditions with $D = \mathbb{T}^N$, with similar proofs as given in  Sections \ref{section_3}, \ref{section_4} and \ref{section_5}.
\end{rmk}

The paper is organized as follows. In Section \ref{sec:2}, the interface motion $(IP)$ is
formally derived from the problem $(P^\e)$ as $\e \downarrow 0$.  In particular,
the constant $\lambda_0$ is obtained.  Section \ref{section_3} studies the generation
of interface and gives the proof of Theorem \ref{Thm_Generation}.
In a short time, the reaction term $f$ governs the system and the solution of 
$(P^\e)$ behaves close to that of an ordinary differential equation.
Section \ref{section_4} discusses the propagation of interface and 
Theorem \ref{Thm_Propagation} is proved.  The sub- and super-solutions are
constructed by means of two functions $U_0$ and $U_1$ formally introduced in
asymptotic expansions in Section \ref{sec:2}.
Section \ref{section_5} gives the proof of Theorem \ref{thm_asymvali}.
Finally, in the Appendix, we define the mobility $\mu_{AC}$ and the surface tension
$\si_{AC}$ of the interface, especially in our nonlinear setting, and show the relation
$\la_0= \mu_{AC}\si_{AC}$.

\section{Formal derivation of the interface motion equation}
\label{sec:2}

 In this section, we formally derive the equation of interface motion of the Problem $(P^\e)$ by applying the method of matched asymptotic expansions. To this purpose, we first define the interface $\Gamma_t$ and then derive its equation of motion. 

 Suppose that $u^\e$  converges to a step function $u$ where
\begin{align*}
	u(x,t) 
	= 
	\begin{cases}
		\alpha_+
		&
		\text{in}~ D^+_t
		\\
		\alpha_-
		&
		\text{in}~ D^-_t.
	\end{cases}
\end{align*}
Let
\begin{align*}
	\Gamma_t = \overline{D^+_t}\cap \overline{D^-_t}, \overline{D^+_t}\cup \overline{D^-_t} = D ,~t \in [0,T].
\end{align*}

 Let also $\overline{d}(x,t)$ be the signed distance function to $\Gamma_t$ defined by 
\begin{align}\label{eqn_signed_dist}
	\overline{d}(x,t)
	:=
	\begin{cases}
		- dist(x, \Gamma_t)
		& \text{ for } x \in \overline{D^-_t}
		\\
		dist(x, \Gamma_t)
		& \text{ for } x \in D^+_t.
	\end{cases}
\end{align}

 Assume that $u^\e$ has the expansions
\begin{align*}
	u^\e(x,t)
	= \alpha_\pm + \e u^\pm_1(x,t) + \e^2 u^\pm_2(x,t) + \cdots
\end{align*}
away from the interface $\Gamma$ and that

\begin{align}\label{eqn_u^eps_expansion}
	u^\e(x,t)
	= U_0(x,t,\xi)
	+ \e U_1(x,t,\xi)
	+ \e^2 U_2(x,t,\xi)
	+ \cdots
\end{align}
near $\Gamma$, where $\displaystyle{\xi = \frac{\overline{d}}{\e}}$. Here, the variable $\xi$ is given to describe the rapid transition between the regions $\{ u^\e \simeq \alpha^+ \}$ and $ \{ u^\e \simeq \alpha^- \}$. In addition, we normalize $U_0$ and $U_k$ so that
\begin{align}\label{cond_Uk_normal}
	U_0(x,t,0) = \alpha
	\nonumber \\
	U_k(x,t,0) = 0.
\end{align}
	
To match the inner and outer expansions, we require that 
\begin{align}\label{cond_U0_matching}
	U_0(x,t,\pm \infty) = \alpha_\pm, 
	~~~
	U_k(x,t,\pm \infty) = u^\pm_k(x,t)
\end{align}
for all $k \geq 2$. 

After substituting the expansion (\ref{eqn_u^eps_expansion}) into $(P^\e)$, we collect the $\e^{-2}$  terms, to obtain 
\begin{align*}
	\vp(U_0)_{zz} + f(U_0) = 0.
\end{align*}
Since this equation only depends on the variable $z$, we may assume that $U_0$ is only a function of the variable $z$, that is $U_0(x,t,z) = U_0(z)$. In view of the conditions (\ref{cond_Uk_normal}) and (\ref{cond_U0_matching}), we find that $U_0$ is the unique increasing solution of the following problem
\begin{align}\label{eqn_AsymptExp_U0}
	\begin{cases}
	(\vp(U_0))_{zz} + f(U_0) 
	= 0
	\\
	U_0(-\infty) = \alpha_-,~ U_0(0)= \alpha,~  U_0(+\infty) = \alpha_+.
	\end{cases}
\end{align}
In order to understand the nonlinearity more clearly, we set
\begin{align*}
	g(v) := f(\vp^{-1}(v)), 
\end{align*}
where $\vp^{-1}$ is the inverse function of $\vp$ and define $V_0(z) := \vp(U_0(z))$; note that such transformation is possible by the condition (\ref{cond_phi'_bounded}). Substituting $V_0$ into equation (\ref{eqn_AsymptExp_U0}) yields
\begin{align}\label{eqn_AsymptExp_V0}
	\begin{cases}
		V_{0zz} + g(V_0) = 0
		\\
		V_0(-\infty) = \vp(\alpha_-),~
		V_0(0)= \vp(\alpha), ~
		V_0(+\infty) = \vp(\alpha_+).
	\end{cases}
\end{align}
Condition (\ref{cond_fphi_equipotential}) then implies  the existence of the unique increasing solution of (\ref{eqn_AsymptExp_V0}).

Next we collect the $\e^{-1}$ terms in the asymptotic expansion. In view of the definition of $U_0(z)$ and the condition (\ref{cond_Uk_normal}), we obtain the following problem
\begin{align}\label{eqn_AsymptExp_U1}
	\begin{cases}
	(\vp'(U_0) \overline{U_1})_{zz} + f'(U_0)\overline{U_1} 
	= \overline{d}_t U_{0z} - (\vp(U_0))_z \Delta \overline{d}
	\\
	\overline{U_1}(x,t,0) = 0, ~~~ \vp'(U_0) \overline{U_1} \in L^\infty(\mathbb{R}).
	\end{cases}
\end{align}
To prove the existence of solution to (\ref{eqn_AsymptExp_U1}), we consider the transformed function $\overline{V_1} = \vp'(U_0)\overline{U_1}$, which gives the problem
\begin{align}\label{eqn_AsymptExp_V1}
	\begin{cases}
		\overline{V_{1}}_{zz} + g'(V_0)\overline{V_1} 
		= 
		\displaystyle{\frac{V_{0z}}{\vp'(\vp^{-1} (V_0) )}} \overline{d}_t
		-  
		V_{0z}	\Delta \overline{d}
		\\
		\overline{V_1}(x,t,0) = 0, ~~~
		\overline{V_1} \in L^\infty(\mathbb{R}).
	\end{cases}
\end{align}
Now, Lemma 2.2 of \cite{AHM2008} implies the existence and uniqueness of $V_1$ provided that  
\begin{align*}
	\int_\R 
	\left(
		\frac{1}{\vp'(\vp^{-1}(V_0))}
		\overline{d}_t
		- \Delta \overline{d}
	\right)
	V_{0z}^2
	= 0.
\end{align*}
Substituting $V_0 = \vp(U_0)$ and $ V_{0z} = \vp'(U_0) U_{0z} $ yields 
\begin{align}
	\overline{d}_t
	= \frac
	{\int_\R V_{0z}^2}
	{\int_\R \frac{V_{0z}^2}{\vp'(\vp^{-1}(V_0))}}
	\Delta \overline{d}
	= \frac
	{\int_\R (\vp'(U_0) U_{0z})^2}{\int_\R \vp'(U_0) U_{0z}^2}
	\Delta \overline{d}.
\end{align}
It is known that $\overline{d}_t=-V_n$, where $V_n$ is equal to the normal velocity on the interface $\Gamma_t$, and $\Delta \overline{d}$ is equal to $(N - 1) \kappa$, where $\kappa$ is the mean curvature of $\Gamma_t$. Thus, we obtain the equation of motion of the interface $\Gamma_t$,

\begin{align}
	V_n = -( N - 1 ) \lambda_0 \kappa,
\end{align}
where 
\begin{align}
	\lambda_0 
	=
	\frac
	{\int_\R (\vp'(U_0) U_{0z})^2}{\int_\R \vp'(U_0) U_{0z}^2}.  \label{eqn_lambda0}
\end{align}
The constant $\lambda_0$ is interpreted as the surface tension $\si_{AC}$ multiplied by the 
mobility $\mu_{AC}$ of the interface; see Appendix.  In particular, $(IP)$ coincides with 
the equation (1) in \cite{AC1979}.

Finally, we derive an explicit form of $\lambda_0$. Indeed, we multiply
the equation \eqref{eqn_AsymptExp_U0} by ${\vp(U_0)}_z$, yielding
\begin{equation*}
{\vp(U_0)}_{zz} {\vp(U_0)}_z+f(U_0) {\vp(U_0)}_{z}=0\,.
\end{equation*}
Integrating from $-\infty$ to $z$, we obtain
$$
\frac 12 \big[\vp(U_0)_z\big]^2(z)+\int _{-\infty} ^{z} f(U_0) {\vp(U_0)}_{z} dz=0\,
$$
or alternatively 
$$
\frac 12 \big[\vp(U_0)_z\big]^2(z)+\int _{\alpha_-} ^{U_0(z)} f(s) \vp'(s) ds=0\,.
$$
Hence, 
\begin{equation}\label{intrinsic}
{\vp(U_0)_z}(z)= \sqrt 2 \sqrt {W(U_0(z))}\,,
\end{equation}
where $W$ is given by 
\begin{align}\label{eq:28}
	W(u) 
	= - \int ^{u} _{\alpha_-} f(s) \vp'(s) ds 
	= \int _{u} ^{\alpha_+} f(s) \vp'(s) ds,
\end{align}
the last equality holdinf by \eqref{cond_fphi_equipotential}. It follows that
$$
\int_{\R} {\vp(U_0)}_z{U_{0z}}(z) dz= \sqrt 2 \int_{\R} \sqrt{W(U_0(z))} U_{0z}(z) dz
$$
so that also
$$
\int _{\R} {\vp'(U_0)} U_{0z}^2(z)dz= \sqrt 2 \int _{\alpha_-} ^{\alpha_+}  \sqrt{W(u)}du.
$$
Similarly, since
$$
\int_\R (\vp'(U_0) U_{0z})^2dz =  \sqrt 2 \int_\R (\vp'(U_0) \sqrt{W(U_0(z))}  U_{0z}) dz,
$$
we get 
\begin{align}\label{eq:29}
	\int_\R (\vp'(U_0) U_{0z})^2dz =  \sqrt 2 \int _{\alpha_-} ^{\alpha_+} \vp'(u) \sqrt{W(u)} du,
\end{align}
so that we finally obtain the formula 
\begin{align}
\label{second lambda_0}
\lambda_0 = \frac
{\int _{\alpha_-} ^{\alpha_+} \vp'(u) \sqrt{W(u)} du}{\int _{\alpha_-} ^{\alpha_+}  \sqrt{W(u)}du}.
\end{align}
Note that if $\vp(u)=u$, the case of the linear diffusion Allen-Cahn equation, we recover the value $\lambda_0 =1$ as expected.

\section{Generation of the interface}\label{section_3}
 In this section, we prove Theorem \ref{Thm_Generation} on the generation of the interface.
  The main idea, based on the comparison principle Lemma \ref{lem_comparison}, is to construct suitable sub- and super-solutions.  The proof of Theorem \ref{Thm_Generation} is given in Section \ref{proof_subsec_thm_gen}. 
  
\subsection{Comparison principle}

\begin{lem}\label{lem_comparison}
	Let $v \in C^{2,1} (\overline{D} \times \R^+)$ satisfy
	\begin{align*}
	(P)~
	\begin{cases}
	v_t
	\geq \Delta \vp(v)
	+ \displaystyle{ \frac{1}{\e^2}} f(v)
	&\mbox{ in } D \times \mathbb{R}^+\\
	{ \dfrac{\partial \vp(v)}{\partial \nu} }
	= 0
	&\mbox{ in } \partial D \times \mathbb{R}^+\\
	v(x,0) \geq u_0(x)
	&\text{ for } x \in D.
	\end{cases}
\end{align*}
Then, $v$ is a super-solution of Problem $(P^\e)$ and we have
\begin{align*}
	v(x,t) \geq u^\e(x,t)
	,~~
	(x,t) \in D \times \R^+.
\end{align*}
If $v$ satisfies the opposite inequalities in Problem $(P)$, then $v$ is a sub-solution of Problem $(P^\e)$ and we have
\begin{align*}
	v(x,t) \leq u^\e(x,t)
	,~~
	(x,t) \in D \times \R^+.
\end{align*}

\end{lem}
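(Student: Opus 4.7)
The plan is to set $w := v - u^\e$ and linearize the inequality via the fundamental theorem of calculus, so that the nonlinear comparison reduces to a standard linear parabolic one. Define
\[
a(x,t) := \int_0^1 \vp'\bigl(\theta v(x,t) + (1-\theta)u^\e(x,t)\bigr)\, d\theta, \qquad b(x,t) := \int_0^1 f'\bigl(\theta v(x,t) + (1-\theta)u^\e(x,t)\bigr)\, d\theta,
\]
so that $\vp(v)-\vp(u^\e) = a\,w$ and $f(v)-f(u^\e) = b\,w$ pointwise. Subtracting the PDE for $u^\e$ from the differential inequality for $v$ then yields the linear inequality
\[
w_t \geq \Delta(a\,w) + \e^{-2}\,b\,w \qquad \text{in } D \times \R^+.
\]
By hypothesis \eqref{cond_phi'_bounded} we have $a \geq C_\vp > 0$, and since $v \in C^{2,1}(\overline{D}\times\R^+)$ while $u^\e$ is a classical solution, both functions are bounded on $\overline{D}\times[0,T]$ for every $T>0$; hence $a$, $\nabla a$ and $b$ are all bounded on each such slab.

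Next I would handle the boundary and initial data. Because $\vp' > 0$ pointwise, the conditions $\partial_\nu \vp(u^\e) = 0$ and $\partial_\nu \vp(v) = 0$ separately force $\partial_\nu u^\e = \partial_\nu v = 0$ on $\partial D$, so in particular $\partial_\nu(aw) = \partial_\nu(\vp(v)-\vp(u^\e)) = 0$, and the initial inequality reads $w(\cdot,0) \geq 0$. From here I would conclude $w \geq 0$ by either invoking the classical parabolic maximum principle for the uniformly parabolic linear operator $w \mapsto w_t - \Delta(aw) - \e^{-2}b\,w$ under Neumann data, or, equivalently, by running a standard energy estimate: testing the linear inequality against $-w_-$ with $w_- := \max(-w,0)$, integrating by parts using $\partial_\nu(aw)=0$, and applying Young's inequality to absorb the $\nabla a \cdot \nabla w_-$ cross-term into $\int_D a|\nabla w_-|^2$, one obtains
\[
\tfrac{d}{dt}\|w_-(\cdot,t)\|_{L^2(D)}^2 \leq C_\e\,\|w_-(\cdot,t)\|_{L^2(D)}^2.
\]
Since $w_-(\cdot,0) \equiv 0$, Gronwall's lemma gives $w_- \equiv 0$, i.e., $v \geq u^\e$. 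The sub-solution case is obtained symmetrically by running the same argument on $u^\e - v$ with reversed inequalities.

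The only real technical issue is that we have the nonlinear diffusion $\Delta \vp(u)$ rather than $\Delta u$. Both obstructions this creates are fully resolved by the nondegeneracy assumption $\vp' \geq C_\vp > 0$: it makes the linearized operator uniformly parabolic with bounded coefficients on every finite time slab, and it allows the Neumann condition $\partial_\nu \vp(u)=0$ to be pulled through $\vp$ and read as $\partial_\nu u = 0$ for each of $u^\e$ and $v$ individually. If $\vp$ were allowed to degenerate (as in the porous-medium-type models mentioned in the introduction), this argument would break down and a genuinely different method would be required.
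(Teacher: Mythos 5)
Your proposal is correct and follows the same route the paper sketches in its one-line proof: form the difference $w := v - u^\e$ and apply a parabolic comparison/maximum-principle argument. You simply spell out the standard details the paper omits — the linearization $\vp(v)-\vp(u^\e)=aw$, $f(v)-f(u^\e)=bw$, the use of $\vp'\geq C_\vp>0$ for uniform parabolicity and to convert the Neumann condition on $\vp(\cdot)$ into one on $w$, and the Gronwall closure — all of which are consistent with the paper's intent.
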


\begin{proof}
{Consider the inequality satisfied for the difference of
a super-solution $v$ and a solution $u^\e$.  Apply the maximum principle to
the function $w : = v - u^\e$ to see that it is positive.}
\end{proof}

\subsection{Solution of the corresponding ordinary differential equation}

  In the first stage of development, we expect that the solution behaves as that of the corresponding ordinary differential equation:

\begin{align}\label{eqn_generation_ODE}
	\begin{cases}
		Y_\tau(\tau, \zeta) = f(Y(\tau,\zeta)) & \tau > 0\\
		Y(0,\zeta) = \zeta & \zeta \in \mathbb{R}.
	\end{cases} 
\end{align}
We deduce the following result from \cite{AHM2008}.

\begin{lem}\label{Lem_Generation_Matthieu}
	Let $\eta \in (0, \eta_0)$ be arbitrary. Then, there exists a positive constant $C_Y 
	= C_Y(\eta)$ such that the following holds:
\begin{enumerate}[label =(\roman*)]
\item There exists a positive constant $\overline{\mu}$ such that for all $\tau > 0$ and all $\zeta \in (-2C_0, 2C_0)$,
\begin{align}\label{lem_gen_1}
	e^{- \overline{\mu} \tau} 
	\leq 
	Y_\zeta(\tau,\zeta) 
	\leq 
	C_Y e^{\mu \tau}.
\end{align}

\item For all $\tau > 0$ and all $\zeta \in (-2C_0, 2C_0)$,
$$
	\left|
	\frac{Y_{\zeta \zeta}(\tau, \zeta)}{Y_\zeta(\tau, \zeta)} 
	\right|
	\leq C_Y (e^{\mu \tau} - 1).
$$

\item There exists a positive constants $\e_0$ such that, for all $\e \in (0, \e_0)$, we have
\begin{enumerate}
\item for all $\zeta \in (-2C_0, 2C_0)$
\begin{align}\label{Lem_Generation_i}
	\alpha_- - \eta
	\leq
	Y(\mu^{-1} |\ln \e|, \zeta)
	\leq
	\alpha_+ + \eta;
\end{align}

\item if $\zeta \geq \alpha + C_Y \e$, then
\begin{align}\label{Lem_Generation_ii}
	Y(\mu^{-1} |\ln \e|, \zeta) \geq \alpha_+ - \eta;
\end{align}

\item if $\zeta \leq \alpha - C_Y \e$, then
\begin{align*}
	Y(\mu^{-1} |\ln \e|, \zeta) \leq \alpha_- + \eta.
\end{align*}
\end{enumerate}

\end{enumerate}

\end{lem}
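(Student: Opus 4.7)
The statement collects three classical estimates for the bistable ODE $Y_\tau=f(Y)$; crucially, $\vp$ does not appear here, so this is a pure ODE lemma about $f$, and the plan is to reduce it to the corresponding lemma of \cite{AHM2008} after checking that our hypotheses \eqref{cond_f_bistable}--\eqref{cond_f_tech} match theirs.

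First, I would set up the representation of $Y_\zeta$. Differentiating \eqref{eqn_generation_ODE} in $\zeta$ gives $(Y_\zeta)_\tau=f'(Y)Y_\zeta$, $Y_\zeta(0,\zeta)=1$, so
\begin{equation*}
Y_\zeta(\tau,\zeta)=\exp\Bigl(\int_0^\tau f'(Y(s,\zeta))\,ds\Bigr).
\end{equation*}
Because $\zeta\in(-2C_0,2C_0)$ and $f$ is bistable with stable equilibria $\alpha_\pm$, the trajectory $Y(\cdot,\zeta)$ is trapped in a bounded interval $J$ depending only on $C_0$; on $J$, $|f'|\le\overline\mu$ for some finite $\overline\mu$, which immediately gives the lower bound $e^{-\overline\mu\tau}\le Y_\zeta$. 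For the upper bound, write $f'(Y)=\mu+\bigl(f'(Y)-f'(\alpha)\bigr)$ and use $|f'(Y)-\mu|\le \|f''\|_{C(J)}|Y-\alpha|$; since $|Y-\alpha|$ decays once $Y$ leaves a neighbourhood of $\alpha$, the contribution of $(f'(Y)-\mu)$ to the integral is uniformly bounded, yielding $Y_\zeta\le C_Y e^{\mu\tau}$.

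For (ii), differentiate once more: $(Y_{\zeta\zeta})_\tau=f''(Y)Y_\zeta^2+f'(Y)Y_{\zeta\zeta}$, so that
\begin{equation*}
\Bigl(\frac{Y_{\zeta\zeta}}{Y_\zeta}\Bigr)_\tau=f''(Y)\,Y_\zeta,\qquad \frac{Y_{\zeta\zeta}}{Y_\zeta}\Big|_{\tau=0}=0.
\end{equation*}
Integrating and using the bound from (i),
\begin{equation*}
\Bigl|\frac{Y_{\zeta\zeta}(\tau,\zeta)}{Y_\zeta(\tau,\zeta)}\Bigr|\le \|f''\|_{C(J)}\int_0^\tau C_Y e^{\mu s}\,ds\le \frac{C_Y\|f''\|_{C(J)}}{\mu}\bigl(e^{\mu\tau}-1\bigr),
\end{equation*}
which is the required inequality after enlarging $C_Y$.

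For (iii), the mechanism is the standard instability at $\alpha$: since $f'(\alpha)=\mu>0$ and $f\in C^2$, for $|Y-\alpha|$ small one has $f(Y)=\mu(Y-\alpha)+O((Y-\alpha)^2)$, so a Gronwall-type comparison with the linearization shows that $|Y(\tau,\zeta)-\alpha|\ge c|\zeta-\alpha|e^{\mu\tau}$ as long as $Y$ remains in a fixed neighbourhood of $\alpha$. Choosing $C_Y$ large, the condition $|\zeta-\alpha|\ge C_Y\e$ forces $Y$ to exit any prescribed neighbourhood of $\alpha$ before time $\tau_\e:=\mu^{-1}|\ln\e|$; once outside, the sign conditions \eqref{cond_f_bistable}--\eqref{cond_f_tech} imply monotone convergence toward the appropriate stable equilibrium $\alpha_\pm$ at a rate depending only on $\eta$, so by possibly enlarging $C_Y$ once more we ensure $Y(\tau_\e,\zeta)\in[\alpha_\pm-\eta,\alpha_\pm+\eta]$. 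Statement (a) then follows because even when $\zeta$ is $o(\e)$-close to $\alpha$, $Y$ cannot exceed $\alpha_++\eta$ nor fall below $\alpha_--\eta$, again from the sign structure of $f$ and the invariance of the interval $[\alpha_--\eta,\alpha_++\eta]$ for the ODE flow.

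The only real obstacle is keeping track of the two-sided role played by $C_Y$: it must be chosen large enough to absorb the $O((Y-\alpha)^2)$ correction in the linearization (guaranteeing that $|\zeta-\alpha|\ge C_Y\e$ yields exit from the neighbourhood before $\tau_\e$) while also being compatible with the constants in (i)--(ii). Since all estimates depend only on $f$ and on $\eta$, a single choice works, and this is precisely the argument carried out in \cite[Section 3]{AHM2008}; as our $f$ satisfies the same bistable hypotheses, the lemma follows.
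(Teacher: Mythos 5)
Your plan is sound for the lower bound in (i), for (ii), and for (iii): it matches the paper's reduction to \cite{AHM2008} and the exponential representation $Y_\zeta=\exp\bigl(\int_0^\tau f'(Y(s,\zeta))\,ds\bigr)$. However, the argument you give for the \emph{upper} bound in (i) has a genuine gap.

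You write $\ln Y_\zeta=\mu\tau+\int_0^\tau\bigl(f'(Y(s,\zeta))-\mu\bigr)\,ds$ and assert that the second integral is ``uniformly bounded'' via the pointwise estimate $|f'(Y)-\mu|\le\|f''\|_{C(J)}|Y-\alpha|$ together with the claim that ``$|Y-\alpha|$ decays once $Y$ leaves a neighbourhood of $\alpha$''. That claim is false for the relevant initial data: for $\zeta\in(\alpha_-,\alpha_+)\setminus\{\alpha\}$ the trajectory moves \emph{away} from the unstable equilibrium $\alpha$ toward $\alpha_\pm$, so $|Y-\alpha|$ increases and stabilises at $|\alpha_\pm-\alpha|>0$; and in every case $\int_0^\tau\|f''\|\,|Y-\alpha|\,ds$ grows linearly in $\tau$. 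In fact $\int_0^\tau(f'(Y)-\mu)\,ds$ is unbounded below. What one needs, and what is true, is that it is bounded \emph{above}, and the mechanism is the sign: once $Y$ settles near $\alpha_\pm$ one has $f'(Y)\approx f'(\alpha_\pm)<0<\mu$, so the integrand is eventually negative. An absolute-value estimate throws exactly this information away. The device that retains it is the change of variables $u=Y(s,\zeta)$, $ds=du/f(u)$, which is the content of Lemma~3.4 and of (3.13) in \cite{AHM2008} that the paper invokes: for $\zeta$ strictly between $\alpha_-$ and $\alpha_+$ one obtains
\begin{equation*}
\ln Y_\zeta(\tau,\zeta)=\mu\tau+\int_\zeta^{Y(\tau,\zeta)}\frac{f'(u)-f'(\alpha)}{f(u)}\,du,
\end{equation*}
with integrand bounded near $\alpha$ (same-order vanishing of $f'(u)-f'(\alpha)$ and of $f(u)$) and upper limit confined to $(\alpha_-,\alpha_+)$, so that $\ln Y_\zeta\le\mu\tau+C$. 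For $\zeta\in(-2C_0,2C_0)\setminus[\alpha_-,\alpha_+]$ --- a range your argument does not address at all --- the analogous identity centred at $\alpha_\pm$ (the paper's \eqref{lem_gen_3}) gives $\ln Y_\zeta\le f'(\alpha_\pm)\tau+C$, whence $Y_\zeta\le C_Y e^{f'(\alpha_\pm)\tau}\le C_Y e^{\mu\tau}$. Without this change-of-variables (sign-sensitive) argument the upper bound in \eqref{lem_gen_1} does not close; once it is repaired, the rest of your proof is fine.
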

\begin{proof}
These results can be found in Lemma 4.7 and Lemma 3.7 of \cite{AHM2008}, except for \eqref{lem_gen_1}. To prove \eqref{lem_gen_1}, 
we follow similar computations as in Lemma 3.2 of \cite{AHM2008}.
Differentiating \eqref{eqn_generation_ODE} by $\zeta$, we obtain 
\begin{align*}
	\begin{cases}
		Y_{\zeta \tau}(\tau, \zeta) = f'(Y(\tau,\zeta))Y_\zeta, & \tau > 0\\
		Y_\zeta(0,\zeta) = 1, & ~
	\end{cases}
\end{align*}
which yields the following equality,
\begin{align}\label{lem_gen_2}
	Y_\zeta(\tau, \zeta)
	=
	\exp
	\left[
		\int_0^\tau f'(Y(s,\zeta))
	\right].
\end{align}
Hence, for $\zeta = \alpha$,
\begin{align*}
	Y_\zeta(\tau, \alpha)
	=
	\exp
	\left[
		\int_0^\tau f'(Y(s,\alpha))
	\right]	
	=
	e^
	{
		 \mu \tau
	},
\end{align*}
where the last equality follows since $Y(\tau,\alpha) = \alpha$. Also, for $\zeta = \alpha_\pm$, by \eqref{cond_f_bistable}, we have
\begin{align*}
	Y_\zeta(\tau, \alpha_\pm)
	\leq 
	e^
	{
		 \mu \tau
	}.
\end{align*}
 For $\zeta \in (\alpha_- + \eta, \alpha_+ - \eta) \setminus \{\alpha\}$, Lemma 3.4 of \cite{AHM2008} guarantees the upper bound of $Y_\zeta$ in \eqref{lem_gen_1}. 
We only need to consider the case that $\zeta \in (-2C_0, 2C_0 ) \setminus (\alpha_- + \eta, \alpha_+ - \eta)$. It follows from \eqref{cond_f_bistable} that we can choose a positive constant $\eta$ and $\overline{\eta}$ such that 
\begin{align}\label{lem_gen_4}
	f'(s) < 0
	~, 
	s \in I,
\end{align}
where $I :=  (\alpha_- - \overline{\eta}, \alpha_- + \eta) \cup (\alpha_+ - \eta, \alpha_+ + \overline{\eta})$. Moreover, \eqref{cond_f_bistable} and \eqref{cond_f_tech} imply 
\begin{align}\label{lem_gen_5}
	Y(\tau,\zeta) 
	\in 
	J,
\end{align} 
for $\zeta 	\in J$ where $J := (\min\{- 2C_0, \alpha_- - \overline{\eta}\}, \alpha_- + \eta) 
	\cup 
	(\alpha_+ - \eta, \max\{ 2C_0, \alpha_+ + \overline{\eta}\})$.
 Thus, \eqref{lem_gen_2}, 
 \eqref{lem_gen_4} and \eqref{lem_gen_5} guarantee the upper bound of \eqref{lem_gen_1} for $\zeta \in I$, which leaves us only the case $\zeta \in (- 2C_0, 2C_0) \setminus I.$ 
 
 We consider now the case $\zeta \in (\alpha_+ + \overline{\eta}, 2 C_0)$; the case of $\zeta \in (-2C_0, \alpha_- - \overline{\eta})$ can be analysed in a similar way. By (3.13) in \cite{AHM2008}, we have
\begin{align}\label{lem_gen_3}
	\ln Y_\zeta(\tau, \zeta)
	=
	f'(\alpha_+) \tau + \int_\zeta^{Y(\tau,\zeta)} \tilde{f}(s) ds
	,~ {\rm \ and \ }
	\tilde{f}(s)
	= \dfrac{f'(s) - f'(\alpha_+)}{f(s)}.
\end{align}
Note that $\tilde{f}(s) \to \dfrac{f''(\alpha_+)}{f'(\alpha_+)}$ as $s \to \alpha_+$, so that
$\tilde{f}$ may be extended as a continuous function. We define
\begin{align*}
	\tilde{F} 
	:=
	\Vert \tilde{f} \Vert_{L^{\infty} (\alpha_+, \max\{ 2C_0, \alpha_+ + \overline{\eta}\})}.
\end{align*}
Since \eqref{cond_f_tech} yields $Y(\tau, \zeta) > \alpha_+$ for $\zeta \in (\alpha_+ + \overline{\eta}, 2 C_0)$, by \eqref{lem_gen_3} we can find a constant $C_Y$ large enough such that 
\begin{align*}
	Y_\zeta(\tau,\zeta)
	\leq 
	C_Y e^{f'(\alpha_+) \tau}
	\leq C_Y e^{\mu \tau}.
\end{align*}
Thus, we obtain the upper bound of \eqref{lem_gen_1}.
\newline
For the lower bound, we first define 
\begin{align*}
	\overline{\mu} := - \min_{s \in I'} f'(s),~ I' = [- 2C_0, 2C_0] \cup [\alpha_-, \alpha_+].
\end{align*}
Note that $\overline{\mu} > 0$ by \eqref{cond_f_bistable}. Thus, by \eqref{lem_gen_2}, we obtain
\begin{align*}
	Y_\zeta(\tau, \zeta) \geq e^{- \overline{\mu} \tau}.
\end{align*}
\end{proof}

\subsection{Construction of sub- and super-solutions}

 We now construct sub- and super-solutions for the proof of Theorem \ref{Thm_Generation}. For simplicity, we first consider the case where 
\begin{align}\label{cond_subsuper_Neumann}
	\frac{\partial u_0}{\partial \nu} = 0 \text{ on } \partial D.
\end{align}
In this case, we define sub- and super- solution as follows:
\begin{equation*}
	w^{\pm}_\e(x,t)
	= Y 
	\left(
		\frac{t}{\e^2},
		u_0(x) 
		\pm 
		\e^2 C_2
		\left( 
			e^{\mu t/\e^2} - 1
		\right)
	\right)\\
		= Y 
	\left(
		\frac{t}{\e^2},
		u_0(x) 
		\pm 
		P(t)
	\right)
\end{equation*}
for some the constant $C_2$. 
In the general case, where (\ref{cond_subsuper_Neumann}) does not necessarily hold, we need to modify $w^{\pm}_\e$ near the boundary $\partial D$. This will be discussed later in the proof of Theorem \ref{Thm_Generation}; see after equation \eqref{eqn_proofofgeneration}.

\begin{lem}\label{Lem_generation_with_homo_Neumann}
Assume (\ref{cond_subsuper_Neumann}). Then, there exist positive constants $\e_0$ and $C_2, \overline{C}_2$ independent of $\e$ such that, for all $\e \in (0,\e_0)$, $w^{\pm}_\e$ satisfies
\begin{align}\label{eqn_Gen_subsuper}
	\begin{cases}
		\mathcal{L} (w^-_\e) < - \overline{C}_2 e^{-\frac{\overline{\mu} t}{\e^2}} < \overline{C}_2 e^{-\frac{\overline{\mu} t}{\e^2}} < \mathcal{L}(w^+_\e)
		&
		\text{ in } \overline{D} \times [0,t^\e]
		\\
		\displaystyle{\frac{\partial w^-_\e}{\partial \nu}
		= \frac{\partial w^+_\e}{\partial \nu}}
		= 0
		&
		\text{ on } \partial D \times [0,t^\e].
	\end{cases}
\end{align}

\end{lem}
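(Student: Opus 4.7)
The plan is to treat $\mathcal{L}(v) := v_t - \Delta \varphi(v) - \e^{-2} f(v)$ as the operator in question and directly compute $\mathcal{L}(w^\pm_\e)$ using the chain rule, then exploit that $Y$ solves the ODE \eqref{eqn_generation_ODE} to cancel the reaction term, and finally bound everything that remains by using the estimates of Lemma \ref{Lem_Generation_Matthieu}. Write $\tau = t/\e^2$ and $\zeta^\pm = u_0(x) \pm P(t)$, so that $w^\pm_\e = Y(\tau,\zeta^\pm)$. Differentiating in time gives $(w^\pm_\e)_t = \e^{-2} Y_\tau \pm Y_\zeta P'(t) = \e^{-2} f(Y) \pm Y_\zeta P'(t)$. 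Since $\nabla w^\pm_\e = Y_\zeta \nabla u_0$ and $\Delta w^\pm_\e = Y_\zeta \Delta u_0 + Y_{\zeta\zeta} |\nabla u_0|^2$, we get
\begin{equation*}
\Delta \varphi(w^\pm_\e) = \varphi''(Y) Y_\zeta^2 |\nabla u_0|^2 + \varphi'(Y)\bigl[Y_\zeta \Delta u_0 + Y_{\zeta\zeta}|\nabla u_0|^2\bigr].
\end{equation*}
The $\e^{-2}f$ terms cancel, leaving the clean identity
\begin{equation*}
\mathcal{L}(w^\pm_\e) = Y_\zeta\Bigl[\pm P'(t) - \varphi'(Y)\Delta u_0 - \varphi''(Y)Y_\zeta |\nabla u_0|^2 - \varphi'(Y)\tfrac{Y_{\zeta\zeta}}{Y_\zeta}|\nabla u_0|^2\Bigr].
\end{equation*}

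Next I would estimate the three error terms inside the bracket. On $[0,t^\e]$, $P(t)$ is of order $\e$, so $|\zeta^\pm| \le C_0 + O(\e) < 2C_0$ for $\e$ small, which places us in the range where Lemma \ref{Lem_Generation_Matthieu} applies. Using that lemma, $Y_\zeta \le C_Y e^{\mu t/\e^2}$ and $|Y_{\zeta\zeta}/Y_\zeta| \le C_Y(e^{\mu t/\e^2}-1)$, combined with the $C_0$, $C_1$ bounds on $u_0$ and on $\varphi',\varphi''$ along the orbit, gives a constant $K = K(C_0,C_1,C_Y)$ such that
\begin{equation*}
\bigl|\varphi'(Y)\Delta u_0 + \varphi''(Y)Y_\zeta |\nabla u_0|^2 + \varphi'(Y)\tfrac{Y_{\zeta\zeta}}{Y_\zeta}|\nabla u_0|^2\bigr| \;\le\; K e^{\mu t/\e^2}.
\end{equation*}
On the other hand, by construction $P'(t) = C_2 \mu\, e^{\mu t/\e^2}$. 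Choosing $C_2$ so large that $C_2 \mu > 2K$ (say), the bracket for $w^+_\e$ is bounded below by $(C_2\mu - K)e^{\mu t/\e^2} \ge \tfrac12 C_2\mu\, e^{\mu t/\e^2}$, and symmetrically the bracket for $w^-_\e$ has the opposite sign with the same magnitude.

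To land the stated bound, I would then use the lower estimate $Y_\zeta \ge e^{-\overline{\mu}t/\e^2}$ from Lemma \ref{Lem_Generation_Matthieu}(i). This yields
\begin{equation*}
\mathcal{L}(w^+_\e) \;\ge\; \tfrac12 C_2\mu\, e^{(\mu - \overline{\mu})t/\e^2} \;\ge\; \tfrac12 C_2\mu\, e^{-\overline{\mu}t/\e^2},
\end{equation*}
and analogously $\mathcal{L}(w^-_\e) \le -\tfrac12 C_2\mu\, e^{-\overline{\mu}t/\e^2}$, so setting $\overline{C}_2 := \tfrac12 C_2 \mu$ gives the desired inequality chain. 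For the boundary condition, since $P(t)$ is $x$-independent, $\nabla \varphi(w^\pm_\e) = \varphi'(Y) Y_\zeta \nabla u_0$, and under assumption \eqref{cond_subsuper_Neumann} we get $\partial_\nu \varphi(w^\pm_\e) = \varphi'(Y) Y_\zeta\, \partial_\nu u_0 = 0$ on $\partial D$.

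The main technical obstacle is the calibration of $C_2$: the term $P'(t)$ must be large enough to beat the spatial-diffusion errors uniformly in $t\in[0,t^\e]$, yet $P(t)$ must remain small enough at $t=t^\e$ so that $\zeta^\pm$ stays in the range $(-2C_0, 2C_0)$ where Lemma \ref{Lem_Generation_Matthieu} is available. The fact that $P(t^\e) = O(\e)$ while $P'(t^\e) = O(1/\e)$ is precisely what makes this balance work. A minor bookkeeping issue is verifying that the coefficient $K$ above really is independent of $\e$, which follows because $Y(\tau,\zeta^\pm)$ stays in the compact range $\{|s-\alpha|\le I\}$ used in the definition of $C_1$.
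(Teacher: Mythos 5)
Your proposal follows essentially the same route as the paper: expand $\mathcal{L}(w^\pm_\e)$ via the chain rule, cancel the $\e^{-2}f$ term using $Y_\tau = f(Y)$, bound the remaining diffusion terms with Lemma~\ref{Lem_Generation_Matthieu}, and then invoke the lower bound $Y_\zeta \ge e^{-\overline{\mu}\tau}$ to produce the final exponential lower bound. The calibration of $C_2$ you identify (large enough to dominate the error terms yet keeping $\zeta^\pm$ inside $(-2C_0,2C_0)$ since $P(t^\e)=O(\e)$) is precisely the balance exploited in the paper's argument.
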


\begin{proof}

We only prove that $w^{+}_\e$ is the desired super-solution; the case for $w^-_\e$ can be treated in a similar way. The assumption (\ref{cond_subsuper_Neumann}) implies 
$$
	\frac{\partial w^\pm_\e}{\partial \nu} = 0 \text{ on } \partial D \times \mathbb{R}^+
$$
	
Define the operator $\mathcal{L}$ by
$$
	\mathcal{L} u = u_t - \Delta \vp(u) - \frac{1}{\e^2} f(u).
$$
Then, direct computation with $\tau = t/\e^2$ gives
\begin{align*}
	\mathcal{L}(w^+_\e) 
	&= \frac{1}{\e^2} Y_\tau 
	+ P'(t) Y_\zeta
	- \left(
	\vp''(w^+_\e) | \nabla u_0|^2 (Y_\zeta)^2
	+ \vp'(w^+_\e) \Delta u_0 Y_\zeta
	+ \vp'(w^+_\e) |\nabla u_0|^2 Y_{\zeta\zeta}
	+ \frac{1}{\e^2} f(Y)
	\right)\\
	&= \frac{1}{\e^2} ( Y_\tau - f(Y))
	+ Y_{\zeta}
	\left(
	P'(t) 
	- \left(
	\vp''(w^+_\e) | \nabla u_0|^2 Y_\zeta
	+ \vp'(w^+_\e) \Delta u_0
	+ \vp'(w^+_\e) |\nabla u_0|^2 \frac{Y_{\zeta\zeta}}{Y_\zeta}
	\right)
	\right).
\end{align*}	
By the definition of $Y$, the first term on the right-hand-side vanishes. By choosing $\e_0$ sufficiently small, for $0 \leq t \leq t_\e$, we have 
$$
	P(t) 
	\leq P(t^\e)
	= \e^2 C_2(e^{\mu t^\e/\e^2} - 1) 	
	= \e^2 C_2(\e^{-1} - 1) < C_0.
$$
Hence, $|u_0 + P(t)| < 2C_0$. Applying Lemma \ref{Lem_Generation_Matthieu}, \eqref{cond_C0} and \eqref{cond_C1} gives 
\begin{align*}
	\mathcal{L} w^+_\e
	&\geq
	Y_\zeta \left(
	C_2 \mu e^{\mu t /\e^2}
	- (
	C_0^2 C_1 C_Y e^{\mu t / \e^2}
	+ C_0 C_1
	+ C_0^2 C_1 C_Y (e^{\mu t / \e^2} - 1))
	\right)\\
	&=
	Y_\zeta \left(
	(C_2 \mu - C_0^2 C_1 C_Y - C_0^2 C_1 C_Y)e^{\mu t / \e^2}
	+ C_0^2 C_1C_Y 
	- C_0 C_1
	\right).
\end{align*}
By \eqref{lem_gen_1},  for $C_2$ large enough, we can find a positive constant $\overline{C}_2$ independent to $\e$ such that 
$$
	\mathcal{L} w^+_\e \geq \overline{C}_2 e^{-\frac{\overline{\mu}  t}{\e^2}}.
$$
Thus, $w^+_\e$ is a super-solution for Problem $(P^\e)$.
\end{proof}

\subsection{Proof of Theorem \ref{Thm_Generation}}
\label{proof_subsec_thm_gen}

We deduce from the comparison principle Lemma \ref{lem_comparison} and the construction of the sub- and super-solutions that 
\begin{align}\label{eqn_proofofgeneration}
	w^-_\e(x,t^\e)
	\leq
	u^\e(x,t^\e)
	\leq
	w^+_\e(x,t^\e)
\end{align}
under the condition (\ref{cond_subsuper_Neumann}).

 If \eqref{cond_subsuper_Neumann} does not hold, one can modify the functions $w^\pm$ as follows:  
from condition (\ref{cond_u0_inout}), there exist positive constants $d_0$ and $\rho$ such that 
(i) the distance function 
$d(x,\partial D) $ is smooth enough on $\{ x \in D : d(x,\partial D) < 2 d_0 \}$ and
(ii) $u_0(x) \geq \alpha + \rho$ if $d(x, \partial D) \leq d_0$. 
Let $\xi$ be a smooth cut-off function defined on $[0,+\infty)$ such that $0 \leq \xi \leq 1, \xi(0) = \xi'(0) = 0$ and $\xi(z) = 1$ for $z \geq d_0$. Define
\begin{align*}
	u_0^+
	&:= \xi(d(x,\partial D)) u_0(x) 
	+\left[
		1 - \xi(d(x, \partial D)) 
	\right]
	\max_{\overline{D}} u_0
	\\
	u_0^-
	&:= \xi(d(x,\partial D)) u_0(x) 
	+\left[
		1 - \xi(d(x, \partial D)) 
	\right]
	(\alpha + \rho).
\end{align*}
Then, $u_0^- \leq u_0 \leq u_0^+$ and $u_0^\pm$ satisfy the homogeneous Neumann boundary condition  \eqref{cond_subsuper_Neumann}. Thus, by using a similar argument as in the proof of Lemma \ref{Lem_generation_with_homo_Neumann}, we may find sub- and super-solutions as follows,
\begin{align*}
	w^{\pm}_\e(x,t)
	= Y 
	\left(
		\frac{t}{\e^2},
		u_0^\pm(x) 
		\pm 
		\e^2 C_2
		\left( 
			e^{\mu t/\e^2} - 1
		\right)
	\right).
\end{align*}

We now show \eqref{Thm_generation_i}, \eqref{Thm_generation_ii} and \eqref{Thm_generation_iii}.  By the definition of $C_0$ in (\ref{cond_C0}), we have
\begin{align*}
	-C_0
	\leq \min_{x \in \overline{D}} u_0(x)
	<
	\alpha + \rho.
\end{align*}
Thus, for $\e_0$ small enough, we have that 
$$
	- 2 C_0
	\leq
	u^\pm_0(x) \pm (C_2 \e - C_2 \e^2)
	\leq
	2 C_0
	~~~
	\text{ for }
	x \in D
$$
holds for any $\e \in (0, \e_0)$.
Thus, the assertion (\ref{Thm_generation_i}) is a direct consequence of (\ref{Lem_Generation_i}) and (\ref{eqn_proofofgeneration}).

For (\ref{Thm_generation_ii}), first we choose $M_0$ large enough so that 
$M_0 \e - C_2 \e + C_2 \e^2 \geq C_Y \e$. Then, for any $x \in D$ such that $u^-_0(x) \geq \alpha + M_0 \e$, we have

$$
	u_0^-(x) -
		\e^2 C_2
		\left( 
			e^{\mu t/\e^2} - 1 \right)\geq u^-_0(x) - (C_2 \e - C_2 \e^2)
	\geq 
	\alpha + M_0 \e - C_2 \e + C_2 \e^2
	\geq 
	\alpha + C_Y \e.
$$
	Therefore, with (\ref{Lem_Generation_ii}) and (\ref{eqn_proofofgeneration}), we see that 

$$
	u^\e(x,t^\e) \geq \alpha_+ - \eta
$$
	
\noindent for any $x \in D$ such that $u^-_0(x) \geq \alpha + M_0 \e$, which implies (\ref{Thm_generation_ii}). 
Note that (\ref{Thm_generation_iii}) can be shown in the same way. This completes the proof of Theorem \ref{Thm_Generation}. \hfill\qed

\section{Propagation of the interface}\label{section_4}

The main idea of the proof of Theorem \ref{Thm_Propagation} is that we proceed by imbrication:
By the comparison principle Lemma \ref{lem_comparison}, we show at the generation time that $u^+(x,0) \geq w^+(x, t^\varepsilon)$ and
$u^-(x,0) \leq w^-(x,t^\varepsilon)$ so that we can pass continuously from the generation
of interface sub- and super-solutions to the propagation of interface sub- and super-solutions.

 To this end, we first introduce a modified signed distance function, and several estimates 
on the functions $U_0$ and $U_1$
useful in the sub- and super-solution construction, before showing Theorem \ref{Thm_Propagation} in Section \ref{proof_thm_prop}.

\subsection{A modified signed distance function}

 We introduce a useful cut off signed distance function $d$ as follows. Recall the signed distance function $\overline{d}$ defined in \eqref{eqn_signed_dist}, and interface $\Gamma_t$ satisfying \eqref{eqn_motioneqn}. Choose $d_0 > 0$ small enough so that the signed distance function $\overline{d}$ is smooth in the set  
$$
\{
(x,t) \in \overline{D} \times [0,T] , | \overline{d}(x,t) | < 3 d_0
\}
$$
and that 
$$
	dist(\Gamma_t, \partial D) \geq 3 d_0
	\text{ for all } t \in [0,T].
$$
Let $h(s)$ be a smooth { non-decreasing} function on $\mathbb{R}$ such that 
$$
	h(s) = 
	\begin{cases}
		s & \text{if}~ |s| \leq d_0\\
		-2d_0 & \text{if}~ s \leq -2d_0\\
		2d_0 & \text{if}~ s \geq 2d_0.
	\end{cases}
$$
We then define the cut-off signed distance function $d$ by 
$$
	d(x,t) = h(\overline{d}(x,t)), ~~~ (x,t) \in \overline{D} \times [0,T].
$$
Note, as $d$ coincides with $\overline{d}$ in the region
$$
	\{ 
	(x,t) \in D \times [0,T] : | d(x,t)| < d_0
	\},
$$
that we have 
\begin{align*}
	d_t 
	= \lambda_0 \Delta d
	~\text{ on }~ \Gamma_t.
\end{align*}
Moreover, $d$ is constant near $\partial D$ and the following properties hold.
\begin{lem}\label{Lem_d_bound}
	There exists a constant $C_d > 0$ such that 
\begin{enumerate}[label = (\roman*)]
	\item 
	$|d_t| + |\nabla d| + |\Delta d| \leq C_d$,
		
	\item 
	$
	\left|
	d_t - \lambda_0 \Delta d
	\right|
	\leq 
	C_d |d|
	$
\end{enumerate}
in $\overline{D} \times [0,T]$.
\end{lem}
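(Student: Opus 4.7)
The plan is to split the domain according to the cut-off structure of $d$. Three facts drive the argument: (a) on the tubular neighbourhood $\{|\overline{d}|<d_0\}$ we have $d=\overline{d}$, and $\overline{d}$ is smooth there by the choice of $d_0$ together with $\Gamma\in C^{4+\nu,(4+\nu)/2}([0,T])$; (b) on $\{|\overline{d}|>2d_0\}$, $d$ is constant, so every spatial and temporal derivative vanishes; (c) on the annular region $\{d_0\leq|\overline{d}|\leq 2d_0\}$, both $h$ and $\overline{d}$ are smooth with bounded derivatives, and $|d|$ is bounded below by $d_0$ since $h$ is non-decreasing with $h(\pm d_0)=\pm d_0$.

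For (i), I would simply apply the chain rule to $d=h(\overline{d})$: $d_t=h'(\overline{d})\overline{d}_t$, $\nabla d=h'(\overline{d})\nabla\overline{d}$, and $\Delta d=h''(\overline{d})|\nabla\overline{d}|^2+h'(\overline{d})\Delta\overline{d}$. On $\{|\overline{d}|\leq 2d_0\}$ the derivatives of $\overline{d}$ are bounded by compactness in time and $|\nabla\overline{d}|=1$; the derivatives of $h$ are bounded globally by construction. Outside this set all three expressions vanish. A single constant dominates both regions.

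For (ii), the key identity is that on $\Gamma_t=\{\overline{d}=0\}$ one has $\overline{d}_t=-V_n$ and $\Delta\overline{d}=(N-1)\kappa$, so the motion law $(IP)$ forces $G(x,t):=\overline{d}_t-\lambda_0\Delta\overline{d}$ to vanish identically on $\Gamma_t$. On $\{|\overline{d}|<d_0\}$, $G$ is $C^1$, and parametrising points in the tubular neighbourhood by $(y,r)\mapsto y+r\,\nu(y)$ with $y\in\Gamma_t$ and $r=\overline{d}$, a one-variable mean value theorem along the normal line gives $|G(x,t)|\leq\|\partial_r G\|_\infty\,|\overline{d}(x,t)|=C|d(x,t)|$, since $d=\overline{d}$ on this set.

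On the complement, (i) provides a uniform bound $|d_t-\lambda_0\Delta d|\leq C$, while $|d|\geq d_0$ on $\{d_0\leq|\overline{d}|\leq 2d_0\}$ and the left-hand side is identically zero on $\{|\overline{d}|>2d_0\}$; hence $|d_t-\lambda_0\Delta d|\leq (C/d_0)|d|$ there. Taking $C_d$ to be the maximum of the constants from the two regions concludes the proof. No step is technically hard; the only item requiring a moment's care is checking that $G$ really vanishes to first order in $\overline{d}$, which follows from the smoothness of the interface $\Gamma$ and the eikonal identity $|\nabla\overline{d}|=1$.
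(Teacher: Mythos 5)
The paper states Lemma~\ref{Lem_d_bound} without proof, treating it as a standard property of the cut-off signed distance function (the hint offered is the identity $d_t=\lambda_0\Delta d$ on $\Gamma_t$, which is exactly the key input); so there is no written argument to compare against. Your proof is correct and supplies the missing details in the standard way.

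A few points worth noting. For (i), the chain-rule decomposition $d_t=h'(\overline d)\,\overline d_t$, $\nabla d=h'(\overline d)\nabla\overline d$, $\Delta d=h''(\overline d)|\nabla\overline d|^2+h'(\overline d)\Delta\overline d$, together with smoothness of $\overline d$ on the compact set $\{|\overline d|\le 2d_0\}\times[0,T]$ (which sits inside $\{|\overline d|<3d_0\}$ where the paper guarantees smoothness) and the fact that $d$ is locally constant outside, gives the uniform bound. For (ii), the decisive step is exactly what you identify: the motion law $V_n=-(N-1)\lambda_0\kappa$ combined with $\overline d_t=-V_n$ and $\Delta\overline d=(N-1)\kappa$ on $\Gamma_t$ forces $G=\overline d_t-\lambda_0\Delta\overline d$ to vanish on $\{\overline d=0\}$, and the mean-value (or Taylor) estimate along normal rays then yields $|G|\le C|\overline d|$ on $\{|\overline d|<d_0\}$; the regularity $\Gamma\in C^{4+\nu,(4+\nu)/2}$ is more than enough for $G$ to be $C^1$ in the normal variable there. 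On the annulus $\{d_0\le|\overline d|\le 2d_0\}$ the monotonicity of $h$ indeed gives $|d|\ge d_0$, so the crude bound from (i) suffices, and outside the annulus both sides vanish. This is the same argument that underlies the corresponding lemmas in Chen and in Alfaro--Hilhorst--Matano, which the paper implicitly refers to.
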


\subsection{Estimates for the functions $U_0, U_1$}

Here, we give estimates for the functions which will be used to construct the sub- and super-solutions. Recall that $U_0$ (cf. \eqref{eqn_AsymptExp_U0}) is a solution of the equation 
\begin{align*}
	(\vp(U_0))_{zz} + f(U_0) = 0.
\end{align*}
	
We have the following lemma.

\begin{lem}\label{Lem_U0_bound}
 There exists constants $\hat{C}_0, \lambda_1 > 0$ such that for all $z\in \mathbb{R}$,
\begin{enumerate}[label = (\roman*)]
	\item
	$
	|U_0| , ~ |U_{0z}| , ~ |U_{0zz}| 
	\leq \hat{C}_0,
	$
	
	\item
	$
	|U_{0z}|, ~ |U_{0zz}|
	\leq \hat{C}_0 \exp(- \lambda_1 |z|).
	$
	
\end{enumerate}
\end{lem}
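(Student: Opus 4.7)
For part (i), the key observation is that $U_0$ is the unique increasing solution with limits $\alpha_\pm$, hence $U_0(z)\in[\alpha_-,\alpha_+]$ and $|U_0|\le\max(|\alpha_-|,|\alpha_+|)$. I would then read $U_{0z}$ off the intrinsic identity \eqref{intrinsic}, namely $\vp'(U_0)U_{0z}=\sqrt{2W(U_0)}$: since $W$ is continuous on the compact interval $[\alpha_-,\alpha_+]$ (vanishing at the endpoints by \eqref{eq:28} and \eqref{cond_fphi_equipotential}) and $\vp'\ge C_\vp>0$, this gives the uniform bound $U_{0z}\le \sqrt{2\max_{[\alpha_-,\alpha_+]}W}\,/\,C_\vp$. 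For $U_{0zz}$, applying the chain rule to $(\vp(U_0))_{zz}=-f(U_0)$ yields
\begin{equation*}
	\vp'(U_0)\,U_{0zz}=-f(U_0)-\vp''(U_0)\,U_{0z}^2,
\end{equation*}
whose right-hand side is now controlled by the bounds just obtained together with the smoothness of $\vp$ and $f$ on $[\alpha_-,\alpha_+]$.

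For part (ii), the exponential decay reduces to showing that $\alpha_+-U_0(z)$ decays exponentially as $z\to+\infty$ (and symmetrically $U_0(z)-\alpha_-$ as $z\to-\infty$). Because $W(\alpha_+)=W'(\alpha_+)=0$ with $W''(\alpha_+)=-f'(\alpha_+)\vp'(\alpha_+)>0$, a Taylor expansion produces a quadratic lower bound $W(u)\ge \tfrac12\bigl(-f'(\alpha_+)\vp'(\alpha_+)\bigr)(1-o(1))(\alpha_+-u)^2$ on a one-sided neighbourhood of $\alpha_+$. Choosing $z_+$ large enough that $U_0(z)$ lies in this neighbourhood for all $z\ge z_+$, and using that $\vp'(U_0)$ is then close to $\vp'(\alpha_+)$, the identity \eqref{intrinsic} becomes the differential inequality
\begin{equation*}
	-\frac{d}{dz}\bigl(\alpha_+-U_0(z)\bigr) = U_{0z}(z)\;\ge\;\lambda_+\bigl(\alpha_+-U_0(z)\bigr)
\end{equation*}
for any fixed $\lambda_+<\sqrt{-f'(\alpha_+)/\vp'(\alpha_+)}$. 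A Gronwall integration gives $\alpha_+-U_0(z)\le C\,e^{-\lambda_+(z-z_+)}$, which propagates through \eqref{intrinsic} to $|U_{0z}(z)|\le C'e^{-\lambda_+ z}$, and through the identity for $U_{0zz}$ displayed in part (i) (whose right-hand side now becomes a sum of exponentially small terms, since $f(U_0)=f(U_0)-f(\alpha_+)=O(\alpha_+-U_0)$) to $|U_{0zz}(z)|\le C''e^{-\lambda_+ z}$.

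A symmetric analysis at $-\infty$ produces an analogous rate $\lambda_->0$ below $\sqrt{-f'(\alpha_-)/\vp'(\alpha_-)}$. Setting $\lambda_1:=\min(\lambda_+,\lambda_-)$ and absorbing the behaviour on the compact middle interval $[z_-,z_+]$ into a single prefactor $\hat C_0$ chosen large enough to also dominate the $L^\infty$ bounds from part (i) delivers both statements. The only delicate point is ensuring that the Taylor remainder in the lower bound for $W$ is small enough to keep the effective rate in the differential inequality strictly positive; this merely forces $\lambda_1$ to be chosen strictly below the linearised rates $\sqrt{-f'(\alpha_\pm)/\vp'(\alpha_\pm)}$ and is not a serious obstacle.
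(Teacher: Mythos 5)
Your proposal is correct, and you arrive at the stated estimates, but by a genuinely different route than the paper. The paper's proof is essentially a reduction argument: it sets $V_0 = \vp(U_0)$, observes that $V_0$ solves the \emph{linear}-diffusion standing-wave problem $V_{0zz} + g(V_0) = 0$ of \eqref{eqn_AsymptExp_V0}, invokes Lemma 2.1 of \cite{AHM2008} to get the $L^\infty$ and exponential-decay bounds on $V_0, V_{0z}, V_{0zz}$, and then transfers these to $U_0 = \vp^{-1}(V_0)$ using \eqref{cond_phi'_bounded} and the $C^4$ smoothness of $\vp$. You instead work directly with $U_0$: the $L^\infty$ bound on $U_{0z}$ comes from the first integral \eqref{intrinsic}, the bound on $U_{0zz}$ from differentiating the profile ODE, and the exponential decay from a Taylor lower bound on $W$ near $\alpha_\pm$ fed into \eqref{intrinsic} to produce a differential inequality plus Gronwall. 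Your approach is self-contained and exposes where the rate constant $\lambda_1$ actually comes from (any $\lambda_1 < \min(\sqrt{-f'(\alpha_-)/\vp'(\alpha_-)},\,\sqrt{-f'(\alpha_+)/\vp'(\alpha_+)})$ works), at the cost of being longer; the paper's is shorter but hides the quantitative content inside the cited lemma and the $V_0 \leftrightarrow U_0$ transfer. Both are sound. One small point you should make explicit if you were to write this out in full: the passage from the decay of $\alpha_+ - U_0$ to the decay of $U_{0zz}$ uses both $f(\alpha_+)=0$ (so $f(U_0)=O(\alpha_+-U_0)$, as you note) and the already-established exponential decay of $U_{0z}$ (so $\vp''(U_0)U_{0z}^2$ decays at twice the rate), after which dividing by $\vp'(U_0)\ge C_\vp$ closes the estimate.
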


\begin{proof}
Recall that $V_0 = \vp(U_0)$ satisfies the equation (\ref{eqn_AsymptExp_V0}) with $\vp \in C^4(\R)$. Lemma 2.1 of \cite{AHM2008} implies that there exist some positive constants $\overline{C}_0$ and $\lambda_1$ such that, for all $z\in \mathbb{R}$,
\begin{align*}
	&|V_0| ,~ |V_{0z}| ,~ |V_{0zz}| 
	\leq \overline{C}_0;
	\\
	&|V_{0z}| ,~ |V_{0zz}|
	\leq \overline{C}_0 \exp(- \lambda_1 |z|),
\end{align*}
and therefore similar bounds for $U_0$.
\end{proof}

In terms of the cut-off signed distance 
function $d=d(x,t)$, for each $(x,t) \in \overline{D}\times [0,T]$,
we define $U_1(x,t,\cdot) : \R \rightarrow \R$ }
as the solution of the following equation:
\begin{align}\label{eqn_U1_bar}
	\begin{cases}
		(\vp'(U_0) U_1)_{zz} + f'(U_0)U_1 
		= (\lambda_0 U_{0z} - (\vp(U_0))_z) \Delta d\\
		U_1(x,t,0) = 0, ~~~
		\vp'(U_0) U_1 \in L^\infty(\mathbb{R}).
	\end{cases}
\end{align}
Existence of the solution $U_1$ can be shown in the same way as that for
$\overline{U_1}$ in \eqref{eqn_AsymptExp_U1}. Finally, we give the following estimates for  $U_1=U_1(x,t,z)$.

\begin{lem}\label{Lem_U1_bound}
 There exists a constant $\hat{C}_1, \lambda_1 > 0$ such that for all $z \in \mathbb{R}$ 
\begin{enumerate}[label = (\roman*)]
	\item
	$
	|U_1| ,~ |{U_1}_z| ,~ |{U_1}_{zz}| ,~ |\nabla {U_1}_z| ,~ |\nabla {U_1}| ,~ |\Delta{U_1}| ,~ |U_{1t}| \leq \hat{C}_1,
	$
	\item
	$
	|{U_1}_z| ,~ |{U_1}_{zz}|,~ |\nabla {U_1}_z| \leq \hat{C}_1 \exp(- \lambda_1 |z|).
	$
\end{enumerate}
{
Here, the operators $\nabla$ and $\Delta$ act on the variable $x$.}
\end{lem}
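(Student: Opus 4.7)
The plan is to exploit the fact that the forcing term on the right-hand side of \eqref{eqn_U1_bar} factorises as $\Delta d(x,t)\cdot h(z)$, where $h(z):=\lambda_0 U_{0z}(z)-(\vp(U_0))_z(z)$ depends only on $z$ and is exponentially decaying by Lemma \ref{Lem_U0_bound}. Since \eqref{eqn_U1_bar} is a linear ODE in $z$ whose coefficients depend only on $z$, by uniqueness we may write
\begin{equation*}
	U_1(x,t,z)=\Delta d(x,t)\,\widetilde U_1(z),
\end{equation*}
where $\widetilde U_1$ is the unique solution of $(\vp'(U_0)\widetilde U_1)_{zz}+f'(U_0)\widetilde U_1=h(z)$ with $\widetilde U_1(0)=0$ and $\vp'(U_0)\widetilde U_1\in L^\infty(\R)$.

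First I would transform the ODE via $\widetilde V_1:=\vp'(U_0)\widetilde U_1$, which reduces it, exactly as in the formal derivation, to $\widetilde V_{1zz}+g'(V_0)\widetilde V_1=h$. To apply Lemma 2.2 of \cite{AHM2008}, I need to verify the Fredholm solvability condition $\int_\R h(z)\,V_{0z}(z)\,dz=0$. Writing this out,
\begin{equation*}
	\int_\R\!\bigl(\lambda_0 U_{0z}-V_{0z}\bigr)V_{0z}\,dz
	=\lambda_0\!\int_\R\!\vp'(U_0)U_{0z}^2\,dz-\!\int_\R\!\bigl(\vp'(U_0)U_{0z}\bigr)^2 dz,
\end{equation*}
which vanishes by the very definition \eqref{eqn_lambda0} of $\lambda_0$. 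Thus Lemma 2.2 of \cite{AHM2008} gives existence, uniqueness, boundedness of $\widetilde V_1$, and exponential decay of $\widetilde V_{1z}$ and $\widetilde V_{1zz}$; the decay is inherited from $h$, which is exponentially decaying by Lemma \ref{Lem_U0_bound}(ii). The condition $\vp'\geq C_\vp>0$ from \eqref{cond_phi'_bounded} together with the uniform bounds on $\vp',\vp''$ on the range of $U_0$ (absorbed in $C_1$) then transfer these bounds to $\widetilde U_1,\widetilde U_{1z},\widetilde U_{1zz}$, yielding estimates of the desired form (i) and (ii) for the $z$-derivatives of $\widetilde U_1$.

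Next, using the factorisation $U_1=\Delta d\cdot\widetilde U_1$, I read off all the remaining estimates:
\begin{equation*}
	\nabla U_1=\nabla(\Delta d)\,\widetilde U_1,\quad \Delta U_1=\Delta(\Delta d)\,\widetilde U_1,\quad U_{1t}=(\Delta d)_t\,\widetilde U_1,\quad \nabla U_{1z}=\nabla(\Delta d)\,\widetilde U_{1z}.
\end{equation*}
The boundedness of $\widetilde U_1$ together with uniform bounds on $\nabla(\Delta d)$, $\Delta(\Delta d)$, $(\Delta d)_t$ on $\overline{D}\times[0,T]$ then give the bounds in (i); and since $\nabla(\Delta d)$ is uniformly bounded while $\widetilde U_{1z}$ decays exponentially in $|z|$, the estimate in (ii) for $\nabla U_{1z}$ follows.

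The main technical obstacle is the higher-derivative control of the cut-off distance $d(x,t)$: Lemma \ref{Lem_d_bound} only supplies bounds on $|d_t|$, $|\nabla d|$ and $|\Delta d|$, whereas here I need uniform bounds on $\nabla(\Delta d)$, $\Delta(\Delta d)$ and $(\Delta d)_t$. These must be obtained from the $C^{4+\nu,(4+\nu)/2}$ regularity of $\Gamma=\bigcup_t \Gamma_t\times\{t\}$ (stated after \eqref{eqn_motioneqn}), which propagates to $\overline{d}$ on the tubular neighbourhood $\{|\overline d|<3d_0\}$ and, via the smooth cut-off $h$, to $d$ on all of $\overline{D}\times[0,T]$; outside that neighbourhood $d$ is constant, so all its derivatives vanish. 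Once this regularity of $d$ is recorded, the proof is essentially bookkeeping.
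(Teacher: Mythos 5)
Your proposal is correct, and the core route --- pass to $V_1=\vp'(U_0)U_1$, reduce to $V_{1zz}+g'(V_0)V_1 = h\,\Delta d$, invoke Lemmas 2.2 and 2.3 of \cite{AHM2008}, and transfer estimates back through the smoothness of $\vp$ --- is the same as the paper's. The genuine simplification you add is the explicit factorisation $U_1(x,t,z)=\Delta d(x,t)\,\widetilde U_1(z)$, justified by linearity and uniqueness (the only bounded solution of the homogeneous equation is a multiple of the translation mode $U_{0z}$, which does not vanish at $z=0$, so $U_1(x,t,0)=0$ pins down the solution). The paper instead relies on the parametric form of \cite{AHM2008} Lemma~2.2 to obtain boundedness of $\nabla V_1$, $\Delta V_1$, treating $\Delta d$ as a smooth coefficient; your factorisation replaces that step with elementary differentiation of a product, so the $x$- and $t$-derivative bounds in (i) and the decay in (ii) are read off directly from derivatives of $\Delta d$ times $\widetilde U_1$ or $\widetilde U_{1z}$. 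You also explicitly verify the Fredholm solvability condition $\int_\R h\,V_{0z}\,dz=0$ from the definition \eqref{eqn_lambda0} of $\lambda_0$, which the paper merely delegates to the earlier construction of $\overline{U_1}$, and you correctly flag that Lemma~\ref{Lem_d_bound} alone is not enough here: uniform control of $\nabla(\Delta d)$, $\Delta(\Delta d)$, $(\Delta d)_t$ must come from the $C^{4+\nu,(4+\nu)/2}$ regularity of $\Gamma$ propagated to $\overline d$ on the tubular neighbourhood and through the smooth cut-off --- a point the paper leaves implicit.
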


\begin{proof}
Define $V_1(z) := \vp'(U_0(z)) {U}_1(z)$. As in  (\ref{eqn_AsymptExp_U1}), we obtain an equation for $V_1$:
\begin{align}\label{eqn_AsymptExp_V1}
	\begin{cases}
		V_{1zz} + g'(V_0)V_1 
		= 
		\Big[
		\lambda_0 \displaystyle{\frac{V_{0z}}{\vp'(\vp^{-1} (V_0) )} }
		-  V_{0z}
		\Big]
		\Delta d
		\\
		V_1(x,t,0) = 0, ~~~
		V_1 \in L^\infty(\mathbb{R}).
	\end{cases}
\end{align}
Applying Lemmas 2.2 and  2.3 of \cite{AHM2008} to (\ref{eqn_AsymptExp_V1}) implies the boundedness of $V_1, V_{1z}, V_{1zz}$. Moreover, since $d$ is smooth in $\overline{D} \times [0,T]$, we can apply Lemma 2.2 of \cite{AHM2008} to obtain the boundedness of $\nabla V_1, \Delta V_1$. The desired estimates for the function $U_1$ now follows via the smoothness of $\varphi$ as in the proof of Lemma \ref{Lem_U0_bound}.
\end{proof}

\subsection{Construction of sub- and super-solutions}

We construct candidates sub- and super-solutions as follows:  Given $\e > 0$, define  
\begin{align}
\label{star0}
	u^\pm(x,t)
	= U_0
	\left(
		\frac{d(x,t) \pm \e p(t)}{\e}
	\right)
	+ \e U_1 
	\left(x,t,
		\frac{d(x,t) \pm \e p(t)}{\e}
	\right)
	\pm q(t)
\end{align}
where 
\begin{align*}
	&	p(t) = - e^{- \beta t/ \e^2} + e^{Lt} + K,\\
	&q(t) = \sigma
	\left(
	\beta e^{- \beta t / \e^2} + \e^2 L e^{Lt}
	\right),
\end{align*}
in terms of positive constants $\e, \beta, \sigma, L, K$. Next, we give specific conditions for these constants which will be used to show that indeed $u^\pm$ are sub- and super-solutions.
We assume that the positive constant $\e_0$ obeys
\begin{align}\label{eqn_cond_elc}
	\e_0^2 L e^{LT} \leq 1, ~~~ 
	\e_0\hat{C}_1 \leq \frac{1}{2}.
\end{align}
	
We first give a result on the boundedness of  $f'(U_0(z)) + (\vp'(U_0(z))_{zz}$.

\begin{lem}\label{Lem_f'+phi'_bound}
	There exists $b > 0$ such that $f'(U_0(z)) + (\vp'(U_0))_{zz} < 0$ on $\{ z : U_0(z) \in [\alpha_-,~\alpha_- + b] \cup [\alpha_+ - b ,~\alpha_+]\}$.
\end{lem}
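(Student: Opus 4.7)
The plan is to show that $f'(U_0(z)) + (\varphi'(U_0))_{zz}$ tends to $f'(\alpha_\pm) < 0$ as $z \to \pm\infty$, and then translate this back to a statement about $U_0$-values through the monotonicity of $U_0$.

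First I would compute the second derivative of $\varphi'(U_0)$ via the chain rule:
\begin{equation*}
(\varphi'(U_0))_{zz}
= \varphi'''(U_0)\,(U_{0z})^2 + \varphi''(U_0)\,U_{0zz}.
\end{equation*}
Since $\varphi \in C^4(\mathbb{R})$ by \eqref{cond_phi'_bounded} and $U_0$ is bounded by $\hat{C}_0$ (Lemma \ref{Lem_U0_bound}(i)), the factors $\varphi''(U_0)$ and $\varphi'''(U_0)$ are uniformly bounded in $z$. By Lemma \ref{Lem_U0_bound}(ii), both $U_{0z}$ and $U_{0zz}$ decay exponentially as $|z| \to \infty$. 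Hence $(\varphi'(U_0))_{zz} \to 0$ as $z \to \pm\infty$.

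Combining with $U_0(z) \to \alpha_\pm$ and the continuity of $f'$, we obtain
\begin{equation*}
\lim_{z \to \pm\infty}\bigl[ f'(U_0(z)) + (\varphi'(U_0))_{zz}\bigr]
= f'(\alpha_\pm) < 0,
\end{equation*}
where the negativity comes from the bistability condition \eqref{cond_f_bistable}. Therefore there exists $R > 0$ such that $f'(U_0(z)) + (\varphi'(U_0))_{zz} < 0$ for all $|z| \ge R$.

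Finally, since $U_0$ is strictly increasing with $U_0(\pm\infty) = \alpha_\pm$, I would choose
\begin{equation*}
b := \min\bigl\{\, U_0(-R) - \alpha_-,\ \alpha_+ - U_0(R)\,\bigr\} > 0,
\end{equation*}
so that $U_0(z) \in [\alpha_-, \alpha_- + b]$ forces $z \le -R$, and $U_0(z) \in [\alpha_+ - b, \alpha_+]$ forces $z \ge R$. On these sets the desired inequality holds. There is no substantive obstacle here: the argument is a direct asymptotic analysis, with the only observation of note being that the $C^4$ regularity of $\varphi$ together with the exponential decay estimates of Lemma \ref{Lem_U0_bound} are exactly what is needed to kill the $(\varphi'(U_0))_{zz}$ term in the limit.
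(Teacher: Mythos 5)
Your proof is correct and follows essentially the same route as the paper's: both compute $(\varphi'(U_0))_{zz} = \varphi'''(U_0)(U_{0z})^2 + \varphi''(U_0)U_{0zz}$ via the chain rule, invoke the exponential decay of $U_{0z}$ and $U_{0zz}$ from Lemma~\ref{Lem_U0_bound} together with the boundedness of $\varphi'',\varphi'''$ to make this term small, and use $f'(\alpha_\pm)<0$ from the bistability condition to dominate. The only superficial difference is bookkeeping: you first locate a threshold $R$ in the $z$-variable and then translate back to a $b$ in $U_0$-value using the monotonicity of $U_0$, whereas the paper chooses the threshold $b$ directly in $U_0$-value space; these are equivalent since $U_0$ is a strictly increasing bijection onto $(\alpha_-,\alpha_+)$.
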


\begin{proof}
We can choose $b_1, \mathcal{F} > 0$ such that 
\begin{align*}
	f'(U_0(z)) < - \mathcal{F}
\end{align*}
on $\{ z : U_0(z) \in [\alpha_-,~\alpha_- + b_1] \cup [\alpha_+ - b_1 ,~\alpha_+]\}$.

 Note that $(\vp'(U_0))_{zz} = \vp'''(U_0) U^2_{0z} + \vp''(U_0) U_{0zz}$. From Lemma \ref{Lem_U0_bound}, we can choose $b_2 > 0$ small enough so that 
\begin{align*}
	| (\vp'(U_0))_z | < \mathcal{F},~~~| (\vp'(U_0))_{zz} | < \mathcal{F}
\end{align*}
on $\{ z : U_0(z) \in [\alpha_-,~\alpha_- + b_2] \cup [\alpha_+ - b_2 ,~\alpha_+]\}$. Define $b := \min \{b_1, b_2 \}$. Then, we have
\begin{align*}
	f'(U_0(z)) + (\vp'(U_0))_{zz}
	<\mathcal{F} - \mathcal{F}
	= 0.
\end{align*}
\end{proof}

 Fix $b > 0$ which satisfies the result of Lemma \ref{Lem_f'+phi'_bound}. Denote $ J_1 := \{ z : U_0(z) \in [\alpha_-,~\alpha_- + b] \cup [\alpha_+ - b ,~\alpha_+]\}, J_2 = \{ z : U_0(z) \in [\alpha_- + b,~\alpha_+ - b]\}$. Let 
\begin{align}\label{cond_beta}
	\beta 
	:= - \sup
	\left\{
		\frac{f'(U_0(z)) + (\vp'(U_0(z)))_{zz}}{3} : z \in J_1
	\right\}.
\end{align}
The following result plays an important role in verifying sub- and super-solution properties.

\begin{lem}\label{Lem_E3_bound}
	There exists a constant $\sigma_0$ small enough such that for every $0 < \sigma < \sigma_0$, we have 
	$$
		U_{0z} - \sigma (f'(U_0) + (\vp'(U_0))_{zz}) \geq 3 \sigma \beta.
	$$
\end{lem}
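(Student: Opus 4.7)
The plan is to split $\mathbb{R}$ into the two regions $J_1$ and $J_2$ introduced just before the lemma, and to obtain the inequality on each piece separately.

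On $J_1$, the definition \eqref{cond_beta} of $\beta$ directly gives $f'(U_0(z)) + (\vp'(U_0(z)))_{zz} \leq -3\beta$, while $U_{0z} \geq 0$ since $U_0$ is increasing. Hence
\[
U_{0z} - \sigma\bigl(f'(U_0) + (\vp'(U_0))_{zz}\bigr) \geq 3\sigma\beta
\]
for every $\sigma > 0$; no smallness requirement is needed on this piece.

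On $J_2$, the plan is to exploit compactness. Because $U_0$ is continuous and strictly increasing from $\alpha_-$ at $-\infty$ to $\alpha_+$ at $+\infty$, the preimage $J_2 = U_0^{-1}([\alpha_- + b, \alpha_+ - b])$ is a compact interval. On this compact set $U_{0z}$ is continuous and strictly positive, so it attains a positive minimum $m := \min_{J_2} U_{0z} > 0$; meanwhile $f'(U_0) + (\vp'(U_0))_{zz}$ is bounded from above by some $M \geq 0$. Then
\[
U_{0z} - \sigma\bigl(f'(U_0) + (\vp'(U_0))_{zz}\bigr) \geq m - \sigma M \geq 3\sigma\beta
\]
as soon as $\sigma \leq \sigma_0 := m/(M + 3\beta)$. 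Combining the two pieces yields the lemma.

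The only mildly delicate point is the strict positivity of $m$, i.e.\ that $U_{0z}$ stays away from $0$ on $J_2$. This is immediate from the first-order identity \eqref{intrinsic}, namely $\vp'(U_0) U_{0z} = \sqrt{2 W(U_0)}$, combined with $\vp' \geq C_\vp > 0$ and the fact that $W(U_0) > 0$ on $(\alpha_-, \alpha_+)$; strict monotonicity of $U_0$, needed to make $J_2$ a genuine bounded interval, follows from the same identity.
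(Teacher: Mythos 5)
Your proof is correct and follows essentially the same strategy as the paper: split $\mathbb{R}$ into $J_1$ and $J_2$, use the definition \eqref{cond_beta} of $\beta$ together with $U_{0z}\ge 0$ on $J_1$, and use compactness of $J_2$ together with a uniform lower bound on $U_{0z}$ and an upper bound on $f'(U_0)+(\vp'(U_0))_{zz}$ to absorb the second term for $\sigma$ small. Your additional justification of $\min_{J_2}U_{0z}>0$ via the identity \eqref{intrinsic} is a welcome clarification that the paper leaves implicit, and the explicit threshold $\sigma_0 = m/(M+3\beta)$ is a slightly sharper formulation of the paper's ``$\sigma$ small enough.''
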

\begin{proof}

To show the assertion, it is sufficient to show that there exists $\sigma_0$ such that, for all $0 < \sigma < \sigma_0$,
\begin{align}\label{lem_E3_1}
	\frac{U_{0z}}{\sigma} 
	- \left( f'(U_0) + (\vp'(U_0))_{zz} \right)
	\geq 3 \beta.
\end{align}
We prove the result on each of the sets $J_1, J_2$.

On the set $J_1$,
note that  $U_{0z} > 0$ on $\mathbb{R}$. If $z \in J_1$, for any $\sigma > 0$ we have
$$
	\frac{U_{0z}}{\sigma} 
	- \left( f'(U_0) + (\vp'(U_0))_{zz} \right)
	> - \sup_{z \in J_1} ( f'(U_0) + (\vp'(U_0))_{zz} )
	= 3  \beta.
$$	

On the set $J_2$, which is compact
 in $\mathbb{R}$, there exists positive constants $c_1, c_2$ such that
\begin{align*}
	U_{0z} \geq c_1
	,~~
	| f'(U_0) + (\vp'(U_0))_{zz} | \leq c_2.
\end{align*} 
 Therefore, we have
\begin{align*}
	\frac{U_{0z}}{\sigma} 
	- \left( f'(U_0) + (\vp'(U_0))_{zz}  \right)
	\geq 
	\dfrac{c_1}{\sigma} - c_2
	\rightarrow
	\infty
	~\text{as}~
	\sigma \downarrow 0,
\end{align*}
implying \eqref{lem_E3_1} on  $J_2$ for $\sigma$ small enough.
\end{proof}

Before we give the rigorous proof that $u^\pm$ are sub- and super-solutions, we first give detailed computations needed in the  sequel. Recall \eqref{star0}.  First, note, with $U_0$ and $U_1$ corresponding to $u^+$, that 
\begin{align}
	\vp(u^+) 
	&= \vp(U_0) 
	+ (\e {U_1} + q) \vp'(U_0)
	+ (\e {U_1} + q)^2 
	\int_0^1 (1 - s) \vp''( U_0 + ( \e {U_1} + q )s ) ds\nonumber\\
	f(u^+)
	&= f(U_0)
	+ (\e {U_1} + q) f'(U_0)
	+ \frac{(\e {U_1} + q)^2 }{2} f''(\theta(x,t)),
	\label{star1}
\end{align}
where $\theta$ is a function satisfying $\theta(x,t) \in \left(U_0, U_0 + \e {U_1} + q(t)\right)$. Straightforward computations yield
\begin{align}
	(u^+)_t
	&= U_{0z} 
	\left(
		\frac{d_t + \e p_t}{\e}
	\right)
	+ \e {U_1}_t
	+ {U_1}_z ( d_t + \e p_t )
	+ q_t
	\nonumber\\ 
	\Delta \vp(u^+)
	&= \nabla \cdot
	\left(
	( \vp(U_0) )_z \frac{\nabla d}{\e}
	+ {U_1}_z \vp'(U_0) \nabla d
	+ \e \nabla {U_1} \vp'(U_0)
	+ (\e {U_1} + q)(\vp'(U_0))_z \frac{\nabla d}{\e}
	+ \nabla R
	\right)
	\nonumber \\ 
	&= ( \vp(U_0) )_{zz} \frac{|\nabla d|^2}{\e^2}
	+ (\vp(U_0))_z \frac{\Delta d}{\e}
	\nonumber \\ 
	&+ ({U_1}_z \vp'(U_0))_z \frac{| \nabla d|^2}{\e}
	+ {U_1}_z \vp'(U_0) \Delta d
	+ 2 \nabla {U_1}_z \vp'(U_0) \cdot \nabla d
	+ \nabla {U_1} (\vp'(U_0))_z \cdot \nabla d
	+ \e \Delta {U_1} \vp'(U_0)
	\nonumber \\ 
	&+({U_1} \vp'(U_0)_z)_z \frac{|\nabla d|^2}{\e}
	+ q (\vp'(U_0))_{zz} \frac{|\nabla d|^2}{\e^2}
	+ \nabla {U_1} (\vp'(U_0))_z \cdot \nabla d
	\nonumber\\	
	&+ (\e {U_1} + q) (\vp'(U_0))_z \frac{ \Delta d }{\e}
	+ \Delta R
	\label{star2}
\end{align}
where $R(x,t) = (\e {U_1} + q)^2 \int_0^1 (1 - s) \vp''( U_0 + ( \e U_1 + q )s ) ds$. Define $r(x,t) = \int_0^1 (1 - s) \vp''( U_0 + ( \e {U_1} + q )s ) ds$. Then, we have 
\begin{align}
	\Delta R(x,t)
	&= \nabla \cdot \nabla 
	\Big{[}
	\Big{(}
	(\e {U_1})^2 + 2 \e q {U_1} + q^2
	\Big{)}r
	\Big{]}
	\nonumber\\	
	&= \nabla \cdot 
	\Big{[}
	\Big{(}
	2  \e {U_1} 
	\left(
{U_{1}}_z \nabla d + \e \nabla {U_1} 
	\right)
	+ 2 q
	\left(
	{U_{1}}_z \nabla d + \e \nabla {U_1} 
	\right)
	\Big{]}
	 r(x,t)
	+
	\Big{(}
	(\e{U_1})^2 + 2 \e q {U_1} + q^2
	\Big{)}
	\nabla r(x,t)
	\Big{]}
	\nonumber\\	
	&=  
	\left[
	2\left(
{U_1}_z \nabla d + \e \nabla {U_1} 
	\right)^2
	+  2 \e {U_1} 
	\left(
	U_{1zz} \frac{| \nabla d |^2 }{\e} 
	+ {U_1}_z \Delta d
	+ 2\nabla {U_1}_z \cdot \nabla d
	+ \e \Delta {U_1} 
	\right)
	\right] r(x,t)
	\nonumber\\	
	& + 2q \left(
	U_{1zz} \frac{| \nabla d |^2 }{\e} 
	+ {U_1}_z \Delta d
	+ 2\nabla {U_1}_z \cdot \nabla d
	+ \e \Delta {U_1} 
	\right) 
	r(x,t)
	\nonumber\\	
	& + 2 
	\Big{[}
	2  \e {U_1} 
	\left(
	{U_{1}}_z \nabla d + \e \nabla {U_1} 
	\right)
	+ 2 q
	\left(
	{U_{1}}_z \nabla d + \e \nabla {U_1} 
	\right)
	\Big{]}
	\nabla r(x,t)
	\nonumber\\ 
	& +
	\Big{(}
	(\e {U_1})^2 + 2 \e q {U_1} + q^2
	\Big{)}
	\Delta r(x,t)
	\label{star3}
\end{align}
where
\begin{align*}
	\nabla r(x,t) 
	&= \int_0^1 (1 - s) \vp'''( U_0 + ( \e {U_1} + q) s )
	\left(
	\left(
	U_{0} + \e  U_{1} s
	\right)_z
	\frac{\nabla d}{\e}
	+ \e \nabla {U_1} s	
	\right)
	ds \\
	\Delta r(x,t)
	&= \int_0^1 (1 - s) \vp'''( U_0 + ( \e {U_1} + q) s )
	\left(
	(U_0 + \e {U_1} s)_{z} \frac{ \Delta d }{\e}
	\right.
	\\
	&\left.
	+
	(U_0 + \e {U_1} s)_{zz} \frac{ | \nabla d |^2 }{\e^2}
	+
	( 2 \nabla {U_1}_z \cdot \nabla d
	+ \e \Delta {U_1})s
	\right)
	ds
	\\
	&+ \int_0^1 (1 - s) \vp^{(4)}( U_0 + ( \e {U_1} + q) s )
	\left(
	(U_{0} + \e  {U_{1}} s)_z
	\frac{\nabla d}{\e}
	+ \e \nabla {U_1} s
	\right)^2
	ds.
\end{align*}
Define $l(x,t), r_i(x,t)$ for $i = 1,2,3$ as follows:
\begin{align*}
	l(x,t) 
	&= 
	U_{1zz} \frac{| \nabla d |^2 }{\e} 
	+ {U_1}_z \Delta d
	+ 2\nabla {U_1}_z \cdot \nabla d
	+ \e \Delta {U_1} 
	\\
	r_1(x,t)
	&= \left[
	2\left(
	{U_1}_z \nabla d + \e \nabla {U_1} 
	\right)^2
	+  2 \e {U_1} 
	l(x,t)
	\right] r(x,t)
	+ 4 \e {U_1} 
	\left(
	{U_{1}}_z \nabla d + \e \nabla {U_1} 
	\right) \nabla r(x,t)
	+ ( \e {U_1} )^2 \Delta r(x,t)
	\\
	r_2(x,t)
	&= 2 q l(x,t) r(x,t)	
	+4 q
	\left(
{U_{1}}_z \nabla d + \e \nabla {U_1} 
	\right)
	\nabla r(x,t)
	+ 2 \e q {U_1} \Delta r(x,t)
	\\
	r_3(x,t)
	&= q^2 \Delta r(x,t).
\end{align*}
Thus, 
\begin{align}
\label{star4}
\Delta R=r_1+r_2+r_3.
\end{align}

We have the following properties for $r_i$.
\begin{lem}\label{Lem_remiander_bound}
 There exists $C_r > 0$ independent of $\e$ such that 
\begin{eqnarray} \label{ineq_r}
	|r_1| \leq C_r, ~~~
	|r_2| \leq \frac{q}{\e} C_r, ~~~
	|r_3| \leq \frac{q^2}{\e^2} C_r.
\end{eqnarray}
\end{lem}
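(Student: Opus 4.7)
The plan is to estimate each piece of $r_1, r_2, r_3$ by inserting the pointwise bounds of Lemmas \ref{Lem_d_bound}, \ref{Lem_U0_bound}, and \ref{Lem_U1_bound}, and then track powers of $\e$ carefully. I expect the bookkeeping to produce exactly the scalings asserted: each term appearing in $r_1$ will be $O(1)$; each in $r_2$ will be $O(q/\e)$; and $r_3$ will be $O(q^2/\e^2)$. All bounds will be uniform in $(x,t)$ and in $\e\in(0,\e_0)$.

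First I would control the integrands defining $r$, $\nabla r$, $\Delta r$, and the auxiliary function $l$. Since $U_0$ and $U_1$ are uniformly bounded by the previous two lemmas, and $q(t)$ is bounded by $\sigma(\beta+1)$ under the condition \eqref{eqn_cond_elc}, the argument $U_0+(\e U_1+q)s$ of $\vp''$ stays in a fixed compact interval for all $s\in[0,1]$. Since $\vp\in C^4$, this yields uniform bounds on $\vp^{(k)}$, $k=2,3,4$, along this curve. Reading off the explicit formulas then gives $|r|\leq C$; $|\nabla r|\leq C/\e$ because of the $\nabla d/\e$ factor inside the integrand; and $|\Delta r|\leq C/\e^2$ because of the $(U_0+\e U_1 s)_{zz}|\nabla d|^2/\e^2$ term and the squared factor in the $\vp^{(4)}$-line. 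Similarly $|l(x,t)|\leq C/\e$, since the $U_{1zz}|\nabla d|^2/\e$ term dominates the other, $O(1)$, contributions to $l$.

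Next I would bound each $r_i$ term by term. For $r_1$, the square $2(U_{1z}\nabla d+\e\nabla U_1)^2$ is $O(1)$, the bracket $2\e U_1 l$ contributes $\e\cdot O(1/\e)=O(1)$, the middle term $4\e U_1(U_{1z}\nabla d+\e\nabla U_1)\nabla r$ contributes $\e\cdot O(1/\e)=O(1)$, and the last term $(\e U_1)^2\Delta r$ contributes $\e^2\cdot O(1/\e^2)=O(1)$. For $r_2$ the prefactor $q$ replaces one factor of $\e$: $2qlr=O(q/\e)$, $4q(U_{1z}\nabla d+\e\nabla U_1)\nabla r=O(q/\e)$, and $2\e q U_1\Delta r=\e q\cdot O(1/\e^2)=O(q/\e)$. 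Finally $r_3=q^2\Delta r$ is directly $O(q^2/\e^2)$. Setting $C_r$ equal to the maximum of the constants produced in these estimates yields \eqref{ineq_r}.

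The main difficulty is nothing conceptual, only the bookkeeping: ensuring that each singular factor $1/\e$ or $1/\e^2$ appearing inside $l$, $\nabla r$, or $\Delta r$ is always compensated by a matching prefactor $\e U_1$, $q$, or $q^2$. Once one observes the invariant that the total power of small parameters in every term of $r_1$ is zero, in every term of $r_2$ carries one factor $q/\e$, and in $r_3$ carries $q^2/\e^2$, the verification reduces to the finite check above and the constants $C_r$ are manifestly $\e$-independent.
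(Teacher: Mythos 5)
Your proposal is correct and follows essentially the same route as the paper: establish that $U_0+(\e U_1+q)s$ lies in a fixed compact set so that $\vp^{(k)}$, $k=2,3,4$, are uniformly bounded there, deduce $|r|\leq C$, $|\nabla r|\leq C/\e$, $|\Delta r|\leq C/\e^2$, $|l|\leq C/\e$, and then read off the scalings of $r_1$, $r_2$, $r_3$. The only difference is that you spell out the final term-by-term bookkeeping that the paper compresses into ``combining these estimates yields \eqref{ineq_r}.''
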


\begin{proof}
 Note that, by Lemmas \ref{Lem_U0_bound},  \ref{Lem_U1_bound} and (\ref{eqn_cond_elc}) the term $U_a := U_0 + (\e {U_1} + q)s$ is uniformly bounded.  Hence, the terms $\vp''(U_a), \vp'''(U_a), \vp^{(4)}(U_a)$ are uniformly bounded, and in particular $r$ is bounded. By similar reasoning for $\nabla r$ and $\Delta r$, it follows that there exists some positive constants $c_\nabla, c_\Delta$ such that 
$$
	| \nabla r | \leq \frac{c_\nabla}{\e}, ~~~
	| \Delta r | \leq \frac{c_\Delta}{\e^2}.
$$
 
\noindent   Moreover, by Lemmas \ref{Lem_U0_bound}, \ref{Lem_U1_bound} there exists a positive constant $c_l$ such that 
$$
	|l(x,t)| 
	\leq \frac{c_l}{\e}.
$$
Combining these estimates yields \eqref{ineq_r}.

\end{proof}

\noindent Let $\sigma$ a fixed constant satisfying 
\begin{align}\label{cond_sigma}
	0 < \sigma \leq \min \{ \sigma_0,\sigma_1,\sigma_2 \},
\end{align}
where $\sigma_0$ is the constant defined in Lemma \ref{Lem_E3_bound}, and $\sigma_1$ and $\sigma_2$ are given by
\begin{align}\label{cond_sigma2}
	\sigma_1 = \frac{1}{2(\beta + 1)}, ~~~
	\sigma_2 = \frac{\beta}{( F + C_r) (\beta + 1)}, ~~~ F = ||f''||_{L^\infty(\alpha_- -1, \alpha_+ + 1)}.
\end{align}
Note that, since $\sigma < \sigma_1$ and \eqref{eqn_cond_elc}, we have
\begin{align*}
	\alpha_- - 1 
	\leq
	|u^\pm|
	\leq
	\alpha_+ + 1.
\end{align*}

\begin{lem}\label{Lem_Prop_subsuper}
	Let  $\beta$ be given by (\ref{cond_beta}) and let $\sigma$ satisfy (\ref{cond_sigma}). Then, there exists $\e_0 > 0$ and a positive constant $C_p$, which does not depend on $\e$, such that 
\begin{align}\label{eqn_Prop_subsuper}
	\begin{cases}
		\mathcal{L} (u^-) 
		< 
		- C_p
		< 
		C_p
		< 
		\mathcal{L}(u^+)
		&
		\text{ in } \overline{D} \times [0,T]
		\\
		\displaystyle{\frac{\partial u^-}{\partial \nu}
		= \frac{\partial u^+}{\partial \nu}}
		= 0
		&
		\text{ on } \partial D \times [0,T]
	\end{cases}
\end{align}
for every $\e \in (0, \e_0)$.
\end{lem}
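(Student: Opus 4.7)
The plan is to verify the inequality for $u^+$ directly; the case of $u^-$ is entirely symmetric under $p \mapsto -p$, $q \mapsto -q$. I substitute \eqref{star1}--\eqref{star4} into $\mathcal{L}(u^+) = (u^+)_t - \Delta\vp(u^+) - \e^{-2} f(u^+)$ and organize the resulting (lengthy) expression into four groups $E_1, E_2, E_3, E_4$ according to their scaling in $\e$, bound each separately, and then sum.

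Group $E_1$ collects the $\e^{-2}$ terms $-(\vp(U_0))_{zz}|\nabla d|^2/\e^2 - f(U_0)/\e^2$. On $\{|d|<d_0\}$ the cut-off satisfies $|\nabla d|^2 = 1$ and \eqref{eqn_AsymptExp_U0} makes $E_1$ vanish identically; outside, $\nabla d \equiv 0$ while $|z| = |d \pm \e p|/\e \geq d_0/(2\e)$ renders $|f(U_0)|$ exponentially small by Lemma \ref{Lem_U0_bound}, so $|E_1| \leq C$. Group $E_2$ gathers the $\e^{-1}$ terms involving $U_1$, the curvature factor $(\vp(U_0))_z \Delta d/\e$, and $d_t\,U_{0z}/\e$ from $(u^+)_t$. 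Invoking the defining equation \eqref{eqn_U1_bar} of $U_1$ to match $(\vp'(U_0)U_1)_{zz}|\nabla d|^2/\e + f'(U_0)U_1/\e$ against $(\lambda_0 U_{0z} - (\vp(U_0))_z)\Delta d/\e$, there remains
$$E_2 = \frac{U_{0z}}{\e}\bigl(d_t - \lambda_0\Delta d\bigr)|\nabla d|^2 + \tilde E_2,$$
where $\tilde E_2$ comprises only the bounded contributions from the $x$-derivatives $\nabla U_1, \Delta U_1, \nabla U_{1z}$ (controlled by Lemma \ref{Lem_U1_bound}). By Lemma \ref{Lem_d_bound}(ii), $|d_t - \lambda_0\Delta d| \leq C_d|d|$, and by Lemma \ref{Lem_U0_bound}(ii), $|U_{0z}| \leq \hat C_0 e^{-\lambda_1|z|}$, so the product $|d|\,e^{-\lambda_1|d|/\e}/\e$ is bounded uniformly in $\e$; hence $|E_2| \leq C$.

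The decisive group is
$$E_3 = p_t\,U_{0z} - \frac{q}{\e^2}\bigl(f'(U_0) + (\vp'(U_0))_{zz}|\nabla d|^2\bigr) + q_t,$$
assembled from $p_t U_{0z}$ in $(u^+)_t$, $-q(\vp'(U_0))_{zz}|\nabla d|^2/\e^2$ in $\Delta\vp(u^+)$, $-qf'(U_0)/\e^2$ from the Taylor expansion of $f$, and $q_t$. The ansatz is tailored so that $q/\e^2 = \sigma p_t$; on $\{|d|<d_0\}$ (where $|\nabla d|=1$) Lemma \ref{Lem_E3_bound} yields $E_3 \geq 3\sigma\beta p_t + q_t$, and outside this set $\nabla d = 0$ while $U_0 \in J_1$, so $-f'(U_0)/3 \geq \beta$ by \eqref{cond_beta} and the same lower bound holds. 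Group $E_4$ consists of the Taylor remainder $-(\e U_1 + q)^2 f''(\theta)/(2\e^2)$ together with $r_1+r_2+r_3$ from \eqref{star4}; Lemma \ref{Lem_remiander_bound} combined with the choice \eqref{cond_sigma2} of $\sigma_2$ yields $|E_4| \leq C + \sigma\beta p_t$. Summing,
$$\mathcal{L}(u^+) \geq E_1+E_2+E_3+E_4 \geq 2\sigma\beta p_t + q_t - C' \geq \sigma\beta L e^{Lt} - C',$$
so choosing $L$ (and correspondingly $K$) large and $\e_0$ small produces the desired uniform lower bound $C_p > 0$. The Neumann condition $\partial u^\pm/\partial\nu = 0$ holds because $d$ is constant in a neighborhood of $\partial D$, forcing $\nabla d \equiv 0$ there and making the $x$-dependence of $U_1$ irrelevant in the normal derivative.

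I expect the principal obstacle to be the bookkeeping in $E_2$: one has to verify that every term generated by differentiating $U_1\bigl(x,t,(d\pm\e p)/\e\bigr)$ twice, in particular the cross term $2\nabla U_{1z}\cdot\nabla d$ and the $\e\Delta U_1$ contribution in $\Delta\vp(u^+)$, remains uniformly $O(1)$, and that the $\e^{-1}$-singular residual $d_t U_{0z}/\e$ is absorbed in all regimes by the decaying factor $|d|\,e^{-\lambda_1|d|/\e}/\e$ coming from Lemma \ref{Lem_d_bound}(ii) together with Lemma \ref{Lem_U0_bound}(ii).
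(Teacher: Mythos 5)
Your decomposition into four groups $E_1,\dots,E_4$ tracks exactly the same scaling structure as the paper's finer split into $E_1,\dots,E_6$, relies on the same lemmas (Lemma \ref{Lem_d_bound}, \ref{Lem_U0_bound}, \ref{Lem_U1_bound}, \ref{Lem_E3_bound}, \ref{Lem_remiander_bound}), and uses the identical absorption mechanism via the ansatz $q=\sigma\e^2 p_t$. So this is essentially the paper's proof with coarser bookkeeping, and it works.

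One step should be tightened. In the $E_3$ estimate you keep the factor $(\vp'(U_0))_{zz}|\nabla d|^2$ and, in the regime $\nabla d=0$, invoke ``$-f'(U_0)/3\geq\beta$ by \eqref{cond_beta}''. But \eqref{cond_beta} only asserts $f'(U_0)+(\vp'(U_0))_{zz}\leq -3\beta$ on $J_1$, which does not give $f'(U_0)\leq -3\beta$ unless $(\vp'(U_0))_{zz}\geq 0$, and $(\vp'(U_0))_{zz}=\vp'''(U_0)U_{0z}^2+\vp''(U_0)U_{0zz}$ has no definite sign. The paper avoids this by putting the \emph{entire} $-q(\vp'(U_0))_{zz}/\e^2$ term into $E_3$ (no $|\nabla d|^2$ factor) so that Lemma \ref{Lem_E3_bound} applies verbatim for all $z$, and shunting the leftover $\frac{|\nabla d|^2-1}{\e^2}\,q(\vp'(U_0))_{zz}$ into $E_1$, where it is exponentially small because $|\nabla d|^2\neq 1$ forces $|d|\geq d_0$ and hence $|z|\geq d_0/(2\e)$. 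Your version can be repaired the same way, or simply by noting that in the regime $|\nabla d|<1$ one has $|(\vp'(U_0))_{zz}|\leq\hat C_0 e^{-\lambda_1 d_0/(2\e)}$, so the missing piece $-\sigma(1-|\nabla d|^2)(\vp'(U_0))_{zz}$ costs at most, say, $\sigma\beta$ for $\e$ small, still leaving $E_3\geq 2\sigma\beta p_t+q_t$. Relatedly, ``outside, $\nabla d\equiv 0$'' is not quite right: the cut-off $h$ interpolates, so $|\nabla d|\in(0,1)$ on $d_0<|\overline d|<2d_0$; the correct dichotomy is $|\nabla d|^2=1$ versus $|\nabla d|^2\neq 1$ (hence $|d|\geq d_0$ and exponential decay in $z$). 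With these two adjustments your argument matches the paper's.
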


\begin{proof}
In the following, we only show that $u^+$ is a super solution; one can show that $u^-$ is a sub-solution in a similar way.

Combining the computations above in \eqref{star1}, \eqref{star2}, \eqref{star3} and \eqref{star4}, we obtain
\begin{align*}
	\mathcal{L}u^+
	&= (u^+)_t - \Delta (\vp(u^+)) - \frac{1}{\e^2} f(u^+)
	\\	
	&= { E_1+E_2+E_3+E_4+E_5+E_6,}	
\end{align*}
{
where }
\begin{align*}
	E_1
	&= 
	- \frac{1}{\e^2} 
	\left(
		(\vp(U_0))_{zz} | \nabla d |^2 + f(U_0)
	\right)
	- \frac{| \nabla d |^2 - 1}{\e^2} q(\vp'(U_0))_{zz}	 
	- \frac{| \nabla d |^2 - 1}{\e} ({U_1} \vp'(U_0))_{zz}
	\\
	E_2
	&= 
	\frac{1}{\e} U_{0z} d_t
	- \frac{1}{\e}
	\left(
		(\vp(U_0))_z \Delta d
		+ ({U_1}_z \vp'(U_0))_z 
		+ ({U_1}\vp'(U_0)_z)_z
		+{U_1} f'(U_0)
	\right)
	\\
	E_3
	&= 
	[ U_{0z} p_t + q_t ] 
	- \frac{1}{\e^2}
	\left[
		q f'(U_0) 
		+ q (\vp'(U_0))_{zz} 
		+ \frac{q^2}{2} f''(\theta)
	\right]
	- r_3(x,t)
	\\
	E_4
	&= 
	\e {U_1}_z p_t 
	- \frac{q}{\e} 
	\Big{[} 
		(\vp'(U_0))_z \Delta d + {U_1}  f''(\theta) 
	\Big{]}
	- r_2(x,t)
	\\
	E_5
	&= 
	\e {U_1}_t
	- \e \Delta {U_1} \vp'(U_0)
	\\
	E_6
	&= 
	{U_1}_z d_t 
	- 2 \nabla {U_1}_z \vp'(U_0) \cdot \nabla d
	- 2 \nabla {U_1} (\vp'(U_0))_z \cdot \nabla d
	- ( {U_1} \vp'(U_0))_z \Delta d
	- r_1(x,t)
	- \frac{({U_1})^2}{2} f''(\theta).
\end{align*}

\vskip .1cm
{\it Estimate of the term $E_1$.}
Using (\ref{eqn_AsymptExp_U0}) we write $E_1$ in the form
$$
	E_1 
	= -\frac{| \nabla d |^2 - 1}{\e^2}
	\big(
	(\vp(U_0))_{zz} + q(\vp'(U_0))_{zz}
	\big)
	-\frac{| \nabla d |^2 - 1}{\e}
	({U_1}\vp'(U_0))_{zz}.
$$
We only consider the term 
$
	e_1 
	:= \displaystyle{\frac{| \nabla d |^2 - 1}{\e}}
	({U_1}\vp'(U_0))_{zz}
$
; the other terms can be bounded similarly. In the region where $|d| \leq d_0$, we have $| \nabla d | = 1$ so that $e_1 = 0$. If, however $|\nabla d| \neq 1$, we have
$$
	\frac{|({U_1}\vp'(U_0))_{zz}|}{\e}
	\leq \frac{\hat{C}_1}{\e} e^{- \lambda_1
	\left| \frac{d}{\e} + p(t) 
	\right|
	}
	\leq \frac{\hat{C}_1}{\e} e^{- \lambda_1
	\left[ \frac{d_0}{\e} - p(t)
	\right]}
	\leq \frac{\hat{C}_1}{\e} e^{- \lambda_1 
	\left[ \frac{d_0}{\e} - (1 + e^{LT} + K)
	\right]}.
$$
Choosing $\e_0$ small enough such that 
$$
	\frac{d_0}{2\e_0} 
	- 
	\Big(
		1 + e^{LT} + K
	\Big)
	\geq 0,
$$
we deduce 
$$
	\frac{|({U_1} \vp'(U_0))_{zz}|}{\e}
	\leq \frac{\hat{C}_1}{\e} e^{- \lambda_1 \frac{d_0}{2 \e}} 
	\rightarrow 0 
	\text{ as }
	\e \downarrow 0.
$$
Thus, $\tfrac{1}{\e} |({U_1} \vp'(U_0))_{zz}|$ is uniformly bounded, so that there exists $\hat{C}_2$ independent of $\e, L $ such that 
$$
	| e_1 | \leq \hat{C}_2.
$$
Finally, as a consequence, we deduce that there exists $\tilde{C}_1$ independent of $\e, L $ such that 
\begin{align}\label{eqn_E1_bound}
	| E_1 | \leq \tilde{C}_1.
\end{align}

\vskip .1cm
{\it Estimate of the term $E_2$.}
Using (\ref{eqn_U1_bar}), we write $E_2$ in the form
$$
	E_2 
	= \frac{1}{\e} U_{0z} d_t
	- \frac{1}{\e} \lambda_0 U_{0z} \Delta d
	= \frac{U_{0z}}{\e} (d_t - \lambda_0 \Delta d).
$$
Applying Lemma \ref{Lem_d_bound}, \ref{Lem_U0_bound} and  \ref{Lem_U1_bound} gives 
$$
	|E_2| 
	\leq C_d \hat{C}_0 \frac{|d|}{\e} e^{- \lambda _1
	\left|
	\frac{d}{\e} + p
	\right|
	} 
	\leq C_d \hat{C}_0
	\max_{\xi \in \mathbb{R}} | \xi | e^{-\lambda_1 |\xi + p|}.
$$
Note that $\max_{\xi \in \mathbb{R}} | \xi | e^{-\lambda_1 |\xi + p|} \leq |p| + \frac{1}{\lambda_1}$ (cf.\ \cite{Danielle2018}). Thus, there exists $\tilde{C}_2$ such that 
\begin{align}\label{eqn_E2_bound}
	|E_2| \leq \tilde{C}_2(1 + e^{LT}).
\end{align}

\vskip .1cm
{\it Estimate of the term $E_3$.}
Substituting $p_t = \dfrac{q}{\e^2 \sigma}$ and then replacing $q$ by its explicit form (cf.\ \eqref{star0}) gives
\begin{align*}
	E_3
	&= \frac{q}{\e^2\sigma}
	\left[
		U_{0z} - \sigma ( f'(U_0) + (\vp'(U_0) )_{zz} ) - \sigma q
		\left(
		\frac{1}{2}f''(\theta) + \frac{\e^2}{q^2} r_3
		\right)
	\right]
	+ q_t
	\\	
	&= \frac{1}{\e^2}
	\left(
		\beta e^{- \frac{\beta t }{\e^2}} + \e^2 L e^{Lt}
	\right)
	\left[
		U_{0z} - \sigma ( f'(U_0) + (\vp'(U_0) )_{zz} ) - \sigma^2 (\beta e^{- \frac{\beta t}{\e^2}} + L \e^2 e^{Lt})
		\left(
		\frac{1}{2}f''(\theta) + \frac{\e^2}{q^2} r_3
		\right)
	\right]
	\\	
	& - \frac{1}{\e^2} \sigma \beta^2 e^{ - \frac{\beta t}{\e^2}} 
	+ \e^2 \sigma L^2 e^{Lt}
	\\	
	&= \frac{1}{\e^2} \beta e^{- \frac{\beta t}{\e^2}}(I - \sigma\beta)
	+ L e^{Lt} [I + \e^2 \sigma L]
\end{align*}
where
$$
	I 
	:= U_{0z} - \sigma ( f'(U_0) + (\vp'(U_0) )_{zz} ) - \sigma^2 (\beta e^{- \frac{\beta t}{\e^2}} + L \e^2 e^{Lt})
	\left(
	\frac{1}{2}f''(\theta) + \frac{\e^2}{q^2} r_3 
	\right).
$$
Applying Lemma \ref{Lem_E3_bound}, using \eqref{eqn_cond_elc} and \eqref{cond_sigma}, yields
\begin{eqnarray*}
	I &\geq & 3 \sigma \beta 
	- \sigma \sigma_2  \left(\beta  + L \e^2 e^{Lt} \right)
	\left(
	|f''(\theta)| + \frac{\e^2}{q^2} |r_3|
	\right)\\
	 &\geq &  3 \sigma \beta 
	- \sigma \sigma_2  \left(\beta +1 \right)
	\left(
	|f''(\theta)| + \frac{\e^2}{q^2} |r_3|
	\right)\\
	&\geq& 2 \sigma \beta,
\end{eqnarray*}
where the last inequality follows from \eqref{cond_sigma2}. This implies that 
\begin{align}\label{eqn_E3_bound}
	E_3 
	\geq \frac{\sigma \beta^2}{\e^2} e^{- \frac{\beta t }{\e^2}}
	+ 2 \sigma \beta L e^{Lt}.
\end{align}

\vskip .1cm
{\it Estimate of the term $E_4$.}
Substituting again $p_t = \dfrac{q}{\e^2 \sigma}$, with $q$ in its explicit form \eqref{star0} gives
\begin{align*}
	E_4 
	&= \frac{q}{\e \sigma}
	\left( {U_1}_z
	- \sigma((\vp'(U_0))_z \Delta d +U_1 f''(\theta))
	- \sigma\frac{\e}{q} r_2
	\right)\\
	&= \frac{1}{\e}
	\left(
		\beta e^{-\frac{ \beta t }{\e^2}} + \e^2 L e^{Lt}
	\right)
	\left({U_1}_z
		- \sigma((\vp'(U_0))_z \Delta d + {U_1} f''(\theta))
		- \sigma\frac{\e}{q} r_2
	\right).
\end{align*}
Applying Lemma \ref{Lem_d_bound}, \ref{Lem_U0_bound}, \ref{Lem_U1_bound} and \ref{Lem_remiander_bound} gives the uniform boundedness of the last factor in parenthesis. Thus, there exists a constant $\tilde{C}_4$ such that
\begin{align}\label{eqn_E4_bound}
	|E_4| 
	\leq \tilde{C}_4 
	\frac{1}{\e}
	\left(
		\beta e^{-\frac{\beta t }{\e^2}} + \e^2 L e^{Lt}
	\right).
\end{align}

\vskip .1cm
{\it Estimate of the terms $E_5$ and $E_6$.}
Applying Lemma \ref{Lem_d_bound}, \ref{Lem_U0_bound} and \ref{Lem_U1_bound}, it follows that there exists $\tilde{C}_5$ such that 
\begin{align}\label{eqn_E5_bound}
	|E_5| + |E_6|
	\leq \tilde{C}_5.
\end{align}

\vskip .1cm
{\it Combination of the above estimates.}
Collecting the estimates (\ref{eqn_E1_bound}),(\ref{eqn_E2_bound}),(\ref{eqn_E3_bound}),(\ref{eqn_E4_bound}),(\ref{eqn_E5_bound}), we obtain 
\begin{align*}
	\mathcal{L}(u^+)
	&\geq
	\left[
		\frac{\sigma \beta^2}{\e^2}
		- \tilde{C}_4 \frac{\beta}{\e}
	\right]e^{- \frac{\beta t }{\e^2}}
	+
	\left[
		2 \sigma \beta L
		- \e \tilde{C}_4 L
		-\tilde{C}_2
	\right]e^{Lt}
	- \tilde{C}_1 - \tilde{C}_2 - \tilde{C}_5
	\\
	& \geq 
	\left[
		\frac{\sigma \beta^2}{\e^2}
		- \tilde{C}_4 \frac{\beta}{\e}
	\right]e^{- \frac{\beta t }{\e^2}}
	+
	\left[
		\frac{2 \sigma \beta L}{3}
		- \e \tilde{C}_4 L
	\right]e^{Lt}
	\\
	&+
	\left[
		\frac{2 \sigma \beta L}{3}
		-\tilde{C}_2
	\right]e^{Lt}
	+
	\left[
		\frac{2 \sigma \beta L}{3}
		-\tilde{C}_6
	\right]
\end{align*}
where $\tilde{C}_6 = \tilde{C}_1 + \tilde{C}_2 + \tilde{C}_5$. Choose $\e_0$ small enough and $L$ large enough so that
\begin{align*}
	\sigma \beta 
	> 
	3 \tilde{C}_4 \e_0
	,~
	\sigma \beta L
	> 3 \max \{ \tilde{C}_2, \tilde{C}_6 \}.
\end{align*}
Then, we deduce that there exists a positive constant $C_p$, independent of $\e$, such that $\mathcal{L}(u^+) \geq C_p$. 
\end{proof}

\subsection{Proof of Theorem \ref{Thm_Propagation}}
\label{proof_thm_prop}

The proof of Theorem \ref{Thm_Propagation} is divided in two steps: (i) For large enough $K>0$, we prove that $u^-(x,t) \leq u^\e(x,t + t^\e) \leq u^+(x,t)$ for $x \in \overline{D}, t \in [0,T - t^\e]$ and (ii) we employ (i) to show the desired result.

\vskip .1cm
{\it Step 1.}  Fix $\sigma, \beta$ as in (\ref{cond_beta}), (\ref{cond_sigma}). 
 Without loss of generality, we may assume that 
\begin{align*}
	0 < \eta < \min \left\{ \eta_0, \sigma\beta  \right\}.
\end{align*} 
Theorem \ref{Thm_Generation} implies the existence of constants $\e_0$ and $M_0$ such that (\ref{Thm_generation_i})-(\ref{Thm_generation_iii}) are satisfied. 
 Conditions (\ref{cond_gamma0_normal}) and (\ref{cond_u0_inout}) imply that there exists a positive constant $M_1$ such that 
\begin{align*}
	&\text{if } d(x,0) \leq - M_1 \e, ~~ \text{ then } u_0(x) \leq \alpha - M_0 \e,
	\\
	&\text{if } d(x,0) \geq  M_1 \e, ~~ \text{ then } u_0(x) \geq \alpha + M_0 \e.
\end{align*}
Hence, we deduce, by applying (\ref{Thm_generation_i}), (\ref{Thm_generation_iii}), that
\begin{align*}
	u^\e(x,t^\e) 
	\leq H^+(x)
	:=
	\begin{cases}
		\alpha_+ + \frac{\eta}{4}
		&
		~~
		\text{ if }
		d(x,0) \geq - M_1 \e
		\\
		\alpha_- + \frac{\eta}{4}
		&
		~~
		\text{ if }
		d(x,0) < - M_1 \e.
	\end{cases}
\end{align*}
Also, by applying (\ref{Thm_generation_i}), (\ref{Thm_generation_ii}),
\begin{align*}
	u^\e(x,t^\e) 
	\geq H^-(x)
	:=
	\begin{cases}
		\alpha_+ - \frac{\eta}{4}
		&
		~~
		\text{ if }
		d(x,0) > M_1 \e
		\\
		\alpha_- - \frac{\eta}{4}
		&
		~~
		\text{ if }
		d(x,0) \leq M_1 \e.
	\end{cases}
\end{align*} 
 
Next,  we fix a sufficient large constant $K$ such that 
\begin{align*}
	U_0(M_1 - K) \leq \alpha_- + \frac{\eta}{4}
	~~~\text{and}
	~~~
	U_0(- M_1 + K) \geq \alpha_+ - \frac{\eta}{4}.
\end{align*}
For such a constant $K$, Lemma \ref{Lem_Prop_subsuper} implies the existence of coefficients $\e_0$ and $L$ such that the inequalities in (\ref{eqn_Prop_subsuper}) holds. 
We claim that
\begin{align}\label{thm_prop_proof1}
	u^-(x,0) \leq H^-(x)
	\leq
	H^+(x)
	\leq
	u^+(x,0).
\end{align}
We only prove the last inequality since the first inequality can be proved similarly. By Lemma \ref{Lem_U1_bound}, we have 
$| {U_1} | \leq \hat{C}_1$. Thus, we can choose $\e_0$ small enough so that, for $\e\in (0,\e_0)$, we have $ \e \hat{C}_1 \leq \dfrac{\sigma \beta}{4}$.  Then, noting \eqref{star0},
\begin{align*}
	u^+(x,0) 
	&\geq 
	U_0
	\left(
		\frac{d(x,0) + \e p(0)}{\e}
	\right)
	-
	\e \hat{C}_1 + \sigma \beta + \e^2 \sigma L 
	\\
	&>
	U_0
	\left(
		\frac{d(x,0)}{\e} + K
	\right)
	+ \frac{3}{4} \eta.
\end{align*}
In the set $\{ x \in D : d(x,0) \geq - M_1\e \}$, the inequalities above, and the fact that $U_0$ is an increasing function imply
\begin{align*}
	u^+(x,0) 
	>
	U_0(- M_1 + K) + \frac{3}{4} \eta
	\geq 
	\alpha_+ + \frac{\eta}{2}
	> H^+(x).
\end{align*}
Moreover, since $U_0 \geq \alpha_-$ in the set $\{ x \in D : d(x,0) < - M_1\e \}$, we have
\begin{align*}
	u^+(x,0)
	> \alpha_- + \frac{3}{4} \eta
	> H^+(x).
\end{align*}
Thus, we proved the first inequality in \eqref{thm_prop_proof1} above.

The inequalities \eqref{thm_prop_proof1} and Lemma \ref{Lem_Prop_subsuper} {now} permit to apply the comparison principle Lemma \ref{lem_comparison}, so that we have 
\begin{align}\label{compprinciple}
	u^-(x,t) \leq u^\e(x,t + t^\e) \leq u^+(x,t)
	~~ \text{ for } ~~	
	x \in \overline{D}, t \in [0,T - t^\e].
\end{align}

\vskip .1cm
{\it Step 2.} Choose $C > 0$ so that 
\begin{align*}
	U_0(C - e^{LT} - K)  \geq \alpha_+ -  \frac{\eta}{2}
	~~ \text{and} ~~
	U_0( - C + e^{LT} + K) \leq \alpha_- + \frac{\eta}{2}.
\end{align*}
Then, we deduce from \eqref{compprinciple}, noting \eqref{star0}, that 
\begin{align*}
	&\text{if}~
	d(x,t) \geq  \e C,
	~\text{then }
	u^\e(x,t + t^\e)
	\geq \alpha_+ - \eta 
	\\
	&\text{if}~
	d(x,t) \leq - \e C,
	~\text{then }
	u^\e(x,t + t^\e)
	\leq \alpha_- + \eta
\end{align*}
and since $\alpha_\pm \pm \eta$ are respectively sub- and super-solutions of $(P^\e)$, we conclude that 
\begin{align*}
	u^\e(x,t + t^\e) \in [\alpha_- - \eta, \alpha_+ + \eta]
\end{align*}
for all $(x,t) \in D \times [0, T - t^\e], \e \in (0,\e_0)$. \hfill \qed

\begin{rmk}\label{rmk_thm13}
	These sub and super solutions guarantee that $u^\e \simeq \alpha_+$(respectively, $u^\e \simeq \alpha_-$) for $d(x,t) \geq c$(respectively, $d(x,t) \leq -c$) with $t > \rho t^\e, \rho > 1$ and $\e > 0$ small enough. In fact, by the definition of $q(t)$, we expect
\begin{align*}
	\e U_1 \pm q(t)
	= \mathcal{O}(\e)
\end{align*}
for $t > (\rho - 1) t^\e$. Also, by Lemma \ref{Lem_U0_bound}, we expect
\begin{align*}
	0 < U_0(z) - \alpha_- < \tilde{c} \e
	~\text{for}~ 
	z > \dfrac{c}{\e}
	,~~
	0 <  \alpha_+ - U_0(z) < \tilde{c} \e
	~\text{for}~ 
	z < - \dfrac{c}{\e}.
\end{align*}
 These estimates yield that there exists a positive constant $c'$ such that 
\begin{align*}
	|u^\e(x,t) - \alpha_+| \leq c'\e
	~\text{for}~
	d(x,t) > c
	,~~
	|u^\e(x,t) - \alpha_-| \leq c'\e
	~\text{for}~
	d(x,t) < - c
\end{align*}
for $t > \rho t^\e.$

\end{rmk}

\section{Proof of Theorem \ref{thm_asymvali}}\label{section_5}

We now introduce the concept of an eternal solution. A solution of an evolution equation is called \textit{eternal} if it is defined for all positive and negative times. In our problem, we study the nonlinear diffusion problem 
\begin{align}\label{eqn_entire}
	w_\tau 
	= \Delta \vp(w) + f(w),
	~~ ((z', \zn), \tau) \in \R^N \times \R,
\end{align}
{
where $z'\in \R^{N-1}$ and $z^{(N)}\in \R$.}
In order to prove Theorem \ref{thm_asymvali}, we first present two lemmas.

\begin{lem}\label{lem_locregularity}
	Let $S$ be a domain of $\mathbb{R}^N \times \R$ and let
	$u$ be a bounded function on $S$ satisfying 
\begin{align}  \label{eq:NAC}
	u_t = \Delta \vp(u) + f(u), ~~ (x,t) \in S,  
\end{align}
where $\vp, f$ satisfy conditions \eqref{cond_f_bistable}, \eqref{cond_phi'_bounded}. 
Then, for any smooth bounded subset $S' \subset S$ separated from $\partial S$ by a positive constant $\tilde{d}$ we have
\begin{align}  \label{eq:C21}
	\Vert u \Vert_{C^{2 + \theta, 1 + \theta/2}(\overline{S'})}
	\leq C',
\end{align}
for any positive constants $0 < \theta < 1$ and $C'= C'(\|u\|_{L^\infty(S)})$ which depends on 
$\|u\|_{L^\infty(S)}$, $\fa, f$,  $\tilde{d}, \theta$ and the size of $S'$, where
\begin{align*}
	\Vert u \Vert_{C^{k + \theta, k' + \theta'}(\overline{S'})}
	&=
	\Vert u \Vert_{C^{k,k'}(\overline{S'})} 
	+ \sum_{i,j = 1}^N 
	\sup_{(x,t),(y,t) \in S', x \neq y}
	\left\{
		\dfrac{|D^k_x u(x,t) - D^k_x u(y,t)|}{|x - y|^\theta}
	\right\}
	\\
	&
	+ 
	\sup_{(x,t),(x,t') \in S', t \neq t'}
	\left\{
		\dfrac{|D^{k'}_tu(x,t) - D^{k'}_tu(x,t')|}{|t - t'|^{\theta'}}
	\right\}
\end{align*}
where $k, k'$ are non-negative integers and $0 < \theta, \theta' < 1$.
\end{lem}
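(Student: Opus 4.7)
The plan is to exploit the uniform parabolicity of \eqref{eq:NAC}, which is ensured by the assumption $\varphi' \geq C_\varphi > 0$ from \eqref{cond_phi'_bounded} together with the boundedness of $u$. In divergence form the equation reads
\[ u_t = \nabla \cdot \bigl(\varphi'(u)\nabla u\bigr) + f(u), \]
and since $\varphi \in C^4$ and $u$ is bounded on $S$, the coefficient $\varphi'(u)$ takes values in a compact subinterval $[C_\varphi, C']$ of $(0,\infty)$, with $C'$ depending only on $\varphi$ and $\|u\|_{L^\infty(S)}$, while $f(u)$ is uniformly bounded. First I would fix a smooth intermediate domain $S' \Subset S'' \Subset S$ with $\mathrm{dist}(S'',\partial S) \geq \tilde d/2$, so that all subsequent interior estimates on $S'$ depend only on the quantities listed in the lemma.

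Next, since the equation is uniformly parabolic in divergence form with bounded measurable coefficients, I would apply the classical De Giorgi--Nash--Moser theorem for parabolic equations to obtain an interior H\"older estimate
\[ \|u\|_{C^{\theta_0,\theta_0/2}(\overline{S''})} \leq C_1 \]
for some $\theta_0 \in (0,1)$ and some $C_1$ depending only on $\|u\|_{L^\infty(S)}$, $\varphi$, $f$, $\tilde d$, and the size of $S''$. Having this, I would rewrite \eqref{eq:NAC} in non-divergence form
\[ u_t - \varphi'(u)\,\Delta u = \varphi''(u)|\nabla u|^2 + f(u), \]
whose principal coefficient $\varphi'(u)$ is now H\"older in $(x,t)$ through its dependence on $u$.

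Finally, I would carry out a standard bootstrap on a cylinder intermediate between $S'$ and $S''$: local $W^{2,p}$ estimates followed by parabolic Sobolev embedding upgrade $\nabla u$ to $C^{\theta}$, after which interior Schauder estimates for linear parabolic equations with H\"older coefficients yield $u \in C^{2+\theta,1+\theta/2}(\overline{S'})$ with the desired bound \eqref{eq:C21}, for any $0 < \theta < 1$ (iterating the Schauder step once more if needed, using $\varphi \in C^4$ and $f \in C^2$). The main obstacle is the initial H\"older estimate: proving continuity of a bounded solution of a quasilinear parabolic equation with only $L^\infty$ coefficients is precisely the content of the De Giorgi--Nash--Moser machinery, and it is available here only because $\varphi' \geq C_\varphi > 0$ enforces uniform parabolicity, so that the degeneracy typical of porous-medium type diffusions is excluded. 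The subsequent Schauder bootstrap is routine given the smoothness of $\varphi$ and $f$.
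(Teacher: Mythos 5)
Your overall strategy (uniform parabolicity from $\varphi'\ge C_\varphi>0$, a low-regularity starting estimate, then a Schauder bootstrap) is the right shape, and your reduction of the problem to interior estimates on nested cylinders is the same framework the paper uses. However, there is a genuine gap in the bootstrap step as you have written it.

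After De~Giorgi--Nash--Moser you only know that $u$ (hence $\varphi'(u)$) is H\"older continuous; you do not yet have any control on $\nabla u$. When you then rewrite the equation in non-divergence form $u_t - \varphi'(u)\Delta u = \varphi''(u)|\nabla u|^2 + f(u)$ and invoke local $W^{2,p}$ estimates, the argument is circular: the right-hand side contains $|\nabla u|^2$, so you need $\nabla u \in L^{2p}_{\mathrm{loc}}$ to make the Calder\'on--Zygmund estimate applicable, but at that point you have nothing beyond H\"older continuity of $u$. The step from ``$u$ H\"older'' to ``$\nabla u$ bounded (or H\"older)'' is precisely the nontrivial part of quasilinear interior regularity and cannot be skipped; you would need to insert either an interior gradient bound (Bernstein-type, or the quasilinear gradient estimate of Ladyzhenskaya--Solonnikov--Uraltseva) or a divergence-form $C^{1,\alpha}$ estimate with H\"older coefficients \emph{before} passing to non-divergence form.

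The paper sidesteps this obstruction by the substitution $v=\varphi(u)$, which is what your proposal is missing. Under this change, $\Delta\varphi(u)$ is replaced by $\Delta v$ and the troublesome $\varphi''(u)|\nabla u|^2$ term disappears entirely: the equation becomes $v_t = \varphi'(\varphi^{-1}(v))\bigl[\Delta v + g(v)\bigr]$ with $g=f\circ\varphi^{-1}$, whose right-hand side is bounded by the $L^\infty$ bound on $u$ alone. The paper then applies the LSU gradient estimate (Theorem 3.1, Ch.~V of \cite{LOVA1988}) directly to this equation to get $|\nabla v|\le c_1$, which makes the coefficient $\varphi'(\varphi^{-1}(v))$ Lipschitz; Krylov's $W^{2,1}_p$ estimate, parabolic Sobolev embedding, and LSU Schauder (Theorem 10.1) then close the bootstrap cleanly. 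Your proof would work if you either adopted this change of variables or replaced the $W^{2,p}$ step by an explicit gradient estimate; as stated it does not go through.
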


\begin{proof}

Since $S'$ is separated from $\partial S$ by a positive distance, we can find  subsets $S_1, S_2 $ such that $S' \subset S_2 \subset S_1 \subset S$ and such that $\partial S, \partial S', \partial S_i$ are separated by a positive distance less than $\tilde{d}$.
By condition \eqref{cond_phi'_bounded} the regularity of $u(x,t)$ is the same as the regularity of $v(x,t) = \vp(u(x,t))$. Note that by \eqref{eq:NAC} $v$ satisfies
\begin{align*}
	v_t
	=
	\vp'(\vp^{-1}(v))  [ \Delta v +g(v) ]
	~,
	g(s) = f(\vp^{-1}(s))
\end{align*}
on $S$. 
By Theorem 3.1 p.\ 437-438 of \cite{LOVA1988}, there exists a positive constant $c_1$ such that
\begin{align*}
	| \nabla v |
	\leq c_1
	\text{ in }
	S_1
\end{align*}
where $c_1$ depends only on $N, \vp, ||u||_{L^\infty(S)}$ and the distance between $S$ and $S_1$. This, together with Theorem 5, p 122 of \cite{Krylov2008}, imply that 
\begin{align*}
	\Vert v \Vert_{W^{2,1}_p(S_2)}
	\leq 
	c_2(\Vert v \Vert_{L^p(S_1)} + \Vert \vp'(\vp^{-1}(v)) g(v) \Vert_{L^p(S_1)})
\end{align*}
for any $p > {N + 2}$ where $c_2$ is a constant that depends on $c_1, p, N, \vp$. With this, by fixing $p$ large enough, the Sobolev embedding theorem in chapter 2, section 3 of \cite{LOVA1988} yields 
\begin{align*}
	\Vert v \Vert_{C^{1 + \theta, (1 + \theta)/2}(S_2)}
	\leq c_3 \Vert v \Vert_{W^{2,1}_p(S_2)}
\end{align*}
where $0 < \theta < 1 - \frac{N + 2}{p}$ and $c_3$ depends on $c_2$ and $p$. This implies that $\vp'(\vp^{-1}(v)), g(v)$ are bounded uniformly in $C^{1 + \theta, (1 + \theta)/2}(S_2)$. Therefore, by Theorem 10.1 p 351-352 of \cite{LOVA1988} we obtain 
\begin{align*}
	\Vert v \Vert_{C^{2 + \theta, 1 + \theta/2}(S')}
	\leq c_4 \Vert v \Vert_{C^{1 + \theta, (1 + \theta)/2}(S_2)}
\end{align*}
where $c_4$ depends on $c_2, f$ and $\vp$.
\end{proof}

\begin{rmk}\label{rmk_locreg}
 Lemma \ref{lem_locregularity} implies uniform $C^{2,1}$ boundedness of the entire solution $w$ in the whole space.  This can be derived as follows: Let
\begin{align*}
	S_{(a,b)} = \{(x,t) \in \R^N \times \R, |x - a|^2 + (t - b)^2 \leq 2 \},~
	S'_{(a,b)} = \{(x,t) \in \R^N \times \R, |x - a|^2 + (t - b)^2 \leq 1 \}
\end{align*} 
where ${(a,b)} \in \R^N \times \R$. Then, Lemma \ref{lem_locregularity} implies uniform $C^{2,1}$ boundedness of $w$ within $S'_{(a,b)}$ where the upper bound is fixed by \eqref{eq:C21}. Since this upper bound is independent to the choice of $(a,b)$, we have uniform $C^{2,1}$ bound of $w$ in the whole space.
\end{rmk}

Next, we present a result inspired by a similar one in \cite{BH2007}.

\begin{lem}\label{lem_entire}

 Let $w((z',\zn),\tau)$ be a bounded eternal solution of \eqref{eqn_entire} satisfying 
\begin{align}\label{lem_entire_1}
	\liminf_{\zn \rightarrow - \infty} \inf_{z' \in \R^{N - 1}, \tau \in \R} w((z',\zn),\tau) = \alpha_-
	,~~
	\limsup_{\zn \rightarrow \infty} \sup_{z' \in \R^{N - 1}, \tau \in \R} w((z',\zn),\tau) = \alpha_+,
\end{align}
where $z' = (z^{(1)}, z^{(2)}, \cdots z^{(N - 1)})$. Then, there exists a constant $z^* \in \R$  such that 
\begin{align*}
	w((z',\zn),\tau) = U_0(\zn - z^*).
\end{align*}

\end{lem}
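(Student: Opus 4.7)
The idea is a sliding/rearrangement argument in the spirit of Berestycki--Hamel~\cite{BH2007}, using the uniform local $C^{2,1}$ estimate of Lemma~\ref{lem_locregularity} and Remark~\ref{rmk_locreg} as the compactness engine that lets one pass to limits of translates. As a first step I would confine the solution to $\a_-\le w\le \a_+$ on $\R^N\times\R$. Indeed, if $M:=\sup w>\a_+$, pick $(x_n,\tau_n)$ with $w(x_n,\tau_n)\to M$ and form the translates $\tilde w_n(x,\tau):=w(x+x_n,\tau+\tau_n)$; by Remark~\ref{rmk_locreg} they are uniformly $C^{2,1}$-bounded, so a subsequence converges in $C^{2,1}_{\mathrm{loc}}$ to an eternal solution $\tilde w$ with $\tilde w(0,0)=M=\sup\tilde w$. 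At this interior maximum $\nabla\tilde w=0$, $\Delta\tilde w\le 0$, $\tilde w_\tau=0$, and since $\vp'>0$ the equation forces $f(M)\ge 0$, contradicting \eqref{cond_f_tech}; the bound $w\ge\a_-$ is symmetric.

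\textbf{Sliding and reduction to a profile.} Next I would upgrade \eqref{lem_entire_1} to uniform limits $\lim_{\zn\to\pm\infty}w((z',\zn),\tau)=\a_\pm$ uniformly in $(z',\tau)$; this is the crux and is treated below. Granted that, set $v:=\vp(w)$, which satisfies $v_\tau=\vp'(\vp^{-1}(v))\bigl[\Delta v+g(v)\bigr]$ with $g=f\circ\vp^{-1}$, a uniformly nondegenerate parabolic equation by \eqref{cond_phi'_bounded}. For $\lambda>0$ and $w^\lambda(x,\tau):=w(x+\lambda e_N,\tau)$, the difference $v^\lambda-v$ solves a linear parabolic inequality with bounded coefficients. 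For $\lambda$ so large that the uniform asymptotics force $w^\lambda-w>0$ outside a slab $\{|\zn|\le R\}$, a standard sweeping argument gives $w^\lambda\ge w$ globally; the strong maximum principle then permits decreasing $\lambda\downarrow 0^+$, yielding $\partial_{\zn}w\ge 0$. Running the same sliding in each of the horizontal directions $\pm e_1,\dots,\pm e_{N-1}$ and in $\pm\tau$ — each such translate is again an eternal solution satisfying \eqref{lem_entire_1} — forces $w$ to be independent of $z'$ and of $\tau$. Hence $w((z',\zn),\tau)=\Phi(\zn)$ with $\Phi$ nondecreasing.

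\textbf{Identification.} The reduced profile $\Phi$ then satisfies $(\vp(\Phi))''+f(\Phi)=0$ on $\R$ with $\Phi(\mp\infty)=\a_\mp$, and the uniqueness of the increasing standing wave established around \eqref{eqn_AsymptExp_U0}--\eqref{eqn_AsymptExp_V0} (which uses the balance \eqref{cond_fphi_equipotential}) gives $\Phi(\zn)=U_0(\zn-z^*)$ for some $z^*\in\R$, proving the lemma.

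\textbf{Main obstacle.} The critical technical point is promoting \eqref{lem_entire_1} to uniform asymptotics, since a priori only one-sided infima/suprema are controlled. I would argue by contradiction: if $\sup_{z',\tau}w((z',\zn_n),\tau)\ge\a_-+\delta$ along some $\zn_n\to-\infty$, translate and extract an eternal solution $\tilde w\in[\a_-,\a_+]$ with $\inf\tilde w=\a_-$ (by the $\liminf$ hypothesis transported through the translations) yet $\tilde w\ge\a_-+\delta$ somewhere. Viewing $\tilde w-\a_-$ as a nonnegative (sub/super)solution of the linearization of the equation at $\a_-$, the strong maximum principle combined with the stability $f'(\a_-)<0$ yields a positive lower bound on $\tilde w-\a_-$, contradicting $\inf\tilde w=\a_-$. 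The nondegeneracy \eqref{cond_phi'_bounded} is precisely what makes all the relevant linearized equations uniformly parabolic, so the classical Berestycki--Hamel machinery ports to the present nonlinear-diffusion setting.
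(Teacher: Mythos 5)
Your overall strategy — confine $w$ to $[\alpha_-,\alpha_+]$, slide in $\zn$ to obtain monotonicity and reduction to a one-variable profile, then identify that profile with $U_0$ — is the same Berestycki--Hamel-style sliding argument the paper uses, and your confinement step and final identification are correct. However, there are two genuine gaps in what you sketch.

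First, the ``main obstacle'' argument does not work as written. You claim that, after extracting a translated limit $\tilde w\ge\a_-$ with $\tilde w(0,0)\ge\a_-+\de$ and $\inf\tilde w=\a_-$, the strong maximum principle together with $f'(\a_-)<0$ yields a \emph{positive} lower bound on $\tilde w-\a_-$, contradicting $\inf\tilde w=\a_-$. This is false: a nonnegative solution of the bistable equation that is strictly positive somewhere can perfectly well have infimum zero without attaining it — $U_0(\zn+c)-\a_-$ is precisely such a function (positive everywhere, infimum zero), and it solves the same equation. The strong maximum principle gives a contradiction only if the infimum $\a_-$ were \emph{attained} at an interior point; with $f'(\a_-)<0$ the zeroth-order coefficient of the linearization is negative, which pushes solutions \emph{towards} $\a_-$ rather than away from it, so no uniform positive floor follows. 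Moreover, it is not clear that $\inf\tilde w=\a_-$ is ``transported through the translations'': the hypothesis \eqref{lem_entire_1} controls $\inf_{z',\tau}w$ only along a subsequence of $\zn\to-\infty$, and the points achieving the near-infimum may escape under the translation you take (which is anchored at near-supremum points). For what it's worth, the paper also glosses over passing from \eqref{lem_entire_1} to the uniform two-sided bounds \eqref{lem_entire_2}; in the application of the lemma (after Remark~\ref{rmk_thm13}) the two-sided control is in fact directly available, so the lemma's hypothesis as written is weaker than what is used. Your recognition of this as the crux is a good observation, but your proposed fix has a real hole.

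Second, ``running the same sliding in each of the horizontal directions $\pm e_1,\dots,\pm e_{N-1}$ and in $\pm\tau$'' cannot work as a sliding argument, because in those directions there is no asymptotic decay of $w$ to anchor the large-shift comparison. The paper's device (see the definition of $w^s$ with the built-in shift $(\rho,\tilde\tau)$) is to slide in $\zn$ only, but with an arbitrary fixed horizontal/time offset $(\rho,\tilde\tau)$ carried along; the conclusion $w((z'+\rho,\zn),\tau+\tilde\tau)\ge w((z',\zn),\tau)$ then holds for \emph{all} $\rho,\tilde\tau$, and since these can have either sign one gets equality, hence independence of $z'$ and $\tau$. Your sketch does not capture this and, as stated, does not reduce $w$ to a function of $\zn$ alone.
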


\begin{proof}
We prove the lemma in two steps. First we show $w$ is an increasing function with respect to the $\zn$ variable. Then, we prove that $w$ only depends on $\zn$, which means that  there exists a function $\psi : \R \rightarrow (\alpha_-, \alpha_+)$ such that 
\begin{align*}
	w((z',\zn),\tau) = \psi(\zn), \ \  ((z',\zn), \tau) \in \R^N \times \R.
\end{align*}
From the increasing property with respect to $\zn$, this allows us to identify $\psi$ as the unique standing wave solution $U_0$ of the problem \eqref{eqn_AsymptExp_U0} up to a translation factor $z^*$.

 We deduce from \eqref{lem_entire_1} that there exist $A > 0$ and $\eta \in (0, \eta_0)$ such that 
\begin{align}\label{lem_entire_2}
	\begin{cases}
	\alpha_+ - \eta
	\leq
	w((z',\zn),\tau) 
	\leq  
	\alpha_+ + \eta
	,
	&~~ \zn \geq A
	\\
	\alpha_- - \eta
	\leq
	w((z',\zn),\tau) \leq \ \alpha_- + \eta
	,
	&~~ \zn \leq - A
	\end{cases}
\end{align}
where $\eta_0$ is defined in \eqref{cond_mu_eta0}.

Let $\tilde{\tau} \in \R, \rho \in \R^{N - 1}$ be arbitrary. Define 
\begin{align*}
	w^s((z',\zn),\tau) := w((z' + \rho, \zn + s), \tau + \tilde{\tau})
\end{align*}
where $s \in \R$. Fix $\chi \geq 2 A$ and define
\begin{align}\label{lem_entire_9}
	b^* 
	:= 
	\inf
	\left\{
		b > 0:  \vp(w^\chi) + b \geq \vp(w) ~ \text{in}~ \R^N \times \R
	\right\}.
\end{align}
We will prove that $b^* = 0$, which will imply that $w^\chi \geq w$ in $\R^N \times \R$ since $\vp$ is a strictly increasing function. To see this, we assume that this does not hold, that is $b^* > 0$. 
Note, by \eqref{lem_entire_1} and \eqref{lem_entire_2} we have
\begin{align}\label{lem_entire_3}
	w^\chi \geq \alpha_+ - \eta > \alpha_- + \eta \geq w
	~\text{if}~
	\zn = -A,~
	\lim_{\zn \rightarrow \pm \infty}  
	\vp(w^\chi) - \vp(w) \to 0.
\end{align}

Let $E = \{ (x,t) \in \R^N \times \R, \vp(\wi) - \vp(\wic) > 0\}$. Define a function $Z$ on $E$ as follows
\begin{align*}
	Z((z', \zn), \tau) 
	&:= e^{-C_Z \tau}[\vp(\wi) - \vp(\wic)]((z',\zn),\tau)
	,
	\\
	C_Z
	&:=
	\max
	\left(
	\sup_{x \in {E}}
	\dfrac{[\vp'(\wi) - \vp'(\wic)] \Delta \vp(\wi) 
	+ [\vp'(\wi)f(\wi) - \vp'(\wic)f(\wic)]}{\vp(\wi) - \vp(\wic)}
	,
	0
	\right) \geq 0.
\end{align*}
Note that $C_Z$ is bounded, since $\wic$ is bounded uniformly in $C^{2,1}(E)$ by Remark \ref{rmk_locreg}
 and in view of \eqref{cond_f_bistable} and \eqref{cond_phi'_bounded} we have
\begin{align}\label{lem_entire_10}
	\lim_{x \to y}
	\dfrac{\vp'(x) - \vp'(y)}{\vp(x) - \vp(y)}
	= \dfrac{\vp''(y)}{\vp'(y)}
	< \infty,
	\lim_{x \to y}
	\dfrac{\vp'(x)f(x) - \vp'(y)f(y)}{\vp(x) - \vp(y)}
	=
	\dfrac{(\vp'f)'(y)}{\vp'(y)}
	< \infty.
\end{align}
Direct computations give
\begin{align*}
	Z_\tau - \vp'(\wic) \Delta Z
	&=
	e^{-C_Z\tau}\vp'(\wi)[\Delta \vp(\wi) + f(\wi)]
	-
	e^{-C_Z\tau}\vp'(\wic)[\Delta \vp(\wic) + f(\wic)]
	\\
	&- C_Z Z
	- e^{-C_Z\tau}\vp'(\wic) [\Delta \vp(\wi) - \Delta \vp(\wic)]
	\\
	&=
	\Big(
	[\vp'(\wi) - \vp'(\wic)]\Delta \vp(\wi)
	+[\vp'(\wi)f(\wi) - \vp'(\wic)f(\wic)]
	\Big)
	e^{-C_z\tau}
	- C_Z Z
	\\
	&\leq
	C_Z Z - C_Z Z = 0
\end{align*}
in $E$.
Then, the maximum principle \cite{PW1984} Theorem 5 p.173 yields that the maximum of $Z$ is located at the boundary of $E$. By the definition of $E$, $Z = 0$ on the boundary of $E$ which implies $Z \leq 0$ in $E$. This contradicts the definition of $E$. Thus, we conclude that $b^* = 0$.

Next, we prove that $w \leq w^\chi$ for any $\chi > 0$ (see \eqref{lem_entire_7} below). For this purpose, define 
\begin{align}\label{lem_entire_6}
	\chi^* := \inf\left \{ \chi \in \R, w^{\tilde{\chi}} \geq w ~ \text{for all }~ \tilde{\chi} \geq \chi \right \}.
\end{align}
Then, our goal can be obtained by proving that $\chi^* \leq 0$. By the previous argument, we already know that $\chi^* \leq 2A$. 
Since $w((z', - \infty), \tau) = \alpha_-$, it follows from \eqref{lem_entire_1} that $\chi^* > -\infty$, since otherwise we would have 
\begin{align*}
	\alpha_-
	=
	w^{-\infty}((z',\zn),\tau)
	\geq
	w,
\end{align*}
leading to a contradiction since $w((z', + \infty), \tau) = \alpha_+ > \alpha_-$. Thus, we conclude $- \infty < \chi^* \leq 2A$. 

Assume that $\chi^* > 0$, and define $E' := \{((z',\zn), \tau) \in \R^N \times \R; ~ |\zn| \leq A  \}$. If $\inf_{E'} (w^{\chi^*} - w) > 0$, then there exists $\delta_0 \in (0, \chi^*)$ such that $w \leq w^{\chi^* - \delta}$ in $E'$ for all $\delta \in (0, \delta_0)$. Since $w \leq w^{\chi^* - \delta}$ on $\partial E'$, we deduce from a similar argument as above that $w \leq w^{\chi^* - \delta}$ in $\{((z',\zn), \tau) \in \R^N \times \R; ~ |\zn| \geq A \}$. This contradicts the definition of $\chi^*$ in \eqref{lem_entire_6} so that $\inf_{E'} (w^{\chi^*} - w) = 0$. Thus, we must have a sequence $(({z}'_n, {z}_n), {t}_n)$ and $\tilde{z}_\infty \in [-A, A]$ such that 
\begin{align*}
	w(({z}'_n, {z}_n), {t}_n) 
	- w^{\chi^*}(({z}'_n, {z}_n), {t}_n) \rightarrow 0
	,~~
	{z}_n \rightarrow {z}_\infty 
	~\text{as}~ 
	n \rightarrow\infty.
\end{align*}
Define ${w}_n((z',\zn),\tau) := w((z' + {z}'_n, \zn), \tau + {t}_n)$. Since $w_n$ is bounded uniformly in $C^{2 + \theta, 1 + \theta/2}(\R^N \times \R)$ by Lemma \ref{lem_locregularity}, $w_n$ converges in $C^{2,1}_{loc}$ to a solution  $\twi$ of \eqref{eqn_entire}. Define $\tilde{Z}$ by
\begin{align*}
	\tilde{Z}((z',\zn),\tau) 
	:=
	[\vp(\twic) - \vp(\twi)]((z',\zn),\tau).
\end{align*}
Since $\vp$ is strictly increasing, by \eqref{lem_entire_6} we have
\begin{align}\label{lem_entire_11}
\begin{cases}
	\tilde{Z}((z',\zn),\tau)
	\geq 0
	~ \text{in}
	~ \R^N \times \R
	\\
	\tilde{Z}((0, {z}_\infty), 0) 
	= \lim_{n\rightarrow\infty} 
	[{\vp({w}^{\chi^*}_n)}
	-
	{\vp({w}_n)}]
	((0,{z}_n),0)
	=
	\lim_{n \rightarrow \infty} 
	[{\vp(w^{\chi^*})}
	-{\vp(w)}]
	(({z}'_n, {z}_n), {t}_n)
	= 0.
\end{cases}
\end{align}
Then, direct computation gives
\begin{align*}
	\tilde{Z}_\tau - \vp'(\twic) \Delta \tilde{Z}
	&=
	\vp'(\twic)[\Delta \vp(\twic) + f(\twic)]
	-
	\vp'(\twi)[\Delta \vp(\twi) + f(\twi)]
	\\
	&
	- \vp'(\twic) [\Delta \vp(\twic) - \Delta \vp(\twi)]
	\\
	&=
	[\vp'(\twic) - \vp'(\twi)]\Delta \vp(\twi)
	+[\vp'(\twic)f(\twic) - \vp'(\twi)f(\twi)],
\end{align*}
If $\tilde{Z} = 0$ we obtain $\tilde{Z}_\tau - \vp'(\twic) \Delta \tilde{Z} = 0$. If $\tilde{Z} > 0$, we obtain 
\begin{align*}
	\tilde{Z}_\tau - \vp'(\twic) \Delta \tilde{Z}
	&=
	\left(
	\dfrac{[\vp'(\twic) - \vp'(\twi)]\Delta \vp(\twi)+[\vp'(\twic)f(\twic) - \vp'(\twi)f(\twi)]}{\vp(\twic) - \vp(\twi)} 
	\right)\tilde{Z}	
	\\
	&\geq
	- C \tilde{Z},
\end{align*}
for some positive constant $C$, where the last inequality follows from \eqref{lem_entire_10} and the fact that $\Delta\vp(\twi)$ is uniformly bounded in the whole space.
Since by \eqref{lem_entire_11} $\tilde{Z}$ attains a non-positive minimum at $((0,{z}_\infty),0)$, we deduce from the maximum principle applied on the domain $\R^N \times (-\infty, 0]$ that  $\tilde{Z} = 0$  for all $(z',\zn) \in \R^N,~\tau \leq 0$.  Hence, $\tilde{Z} \equiv 0$ in $\R^N \times \R$. This implies that
\begin{align*}
	\twi((0,0),0) 
	= 
	\twi((\rho, \chi^*), \tilde{\tau})
	= 
	\twi((2\rho, 2\chi^*), 2\tilde{\tau})
	=
	\cdots
	=
	\twi((k\rho, k \chi^*), k \tilde{\tau})
\end{align*}
 for all $k \in \mathbb{Z}$, contradicting the fact that $\twi((k\rho, k \chi^*), k \tilde{\tau}) \rightarrow \alpha_+$ as $k \rightarrow  \infty$ and $\twi((k\rho, k \chi^*), k \tilde{\tau}) \rightarrow \alpha_-$ as $k \rightarrow - \infty$.

Thus, we have $\chi^* \leq 0$, and therefore
\begin{align}\label{lem_entire_7}
	w((z',\zn),\tau) \leq 
	w^0((z',\zn),\tau)
	= w((z' + \rho, \zn), \tau + \tilde{\tau})
\end{align}
holds for any $\rho \in \R^{N - 1}, \tilde{\tau} \in \R$.

 We now show that $w$ only depends on $\zn$. Suppose $w$ depends on $z'$ and $\tau$. Then, there exist $z'_1, z'_2 \in \R^{N - 1}, \zn \in \R$ and $t'_1, t'_2 \in \R$ such that 
\begin{align}\label{lem_entire_8}
	w((z'_1, \zn), t_1') < w((z'_2, \zn), t'_2).
\end{align}
 Then, by letting $z' = z_2',~\rho =  z_1' - z_2'$ and $\tau = t_2',~ \tilde{\tau} = t_1' - t_2'$ in the inequality \eqref{lem_entire_7}, we deduce
\begin{align*}
	w((z_2', \zn) ,t_2') \leq w((z_1', \zn), t_1'),
\end{align*}
 contradicting \eqref{lem_entire_8}. This implies that $w$ only depends on $\zn$, namely $w((z',\zn),\tau) = \psi(\zn)$. 
  Finally, from the definition of $\chi^*$, we have that $\psi$ is increasing. 
\end{proof}

{\bf Proof of Theorem \ref{thm_asymvali}.}
We first prove $(i)$. Recall that $d(x,t)$ is the 
{cut-off}
signed distance function to the interface $\Gamma_t$ moving according to equation \eqref{eqn_motioneqn}, and $d^\e(x,t)$ is the signed distance function corresponding to the interface
\begin{align*}
	\Gamma_t^\e := \{ x \in D, ~ u^\e(x,t) = \alpha \}.
\end{align*}

Let $T_1$ be an arbitrary constant such that $\frac{T}{2} < T_1 < T$. Assume by contradiction that \eqref{thm_asymvali_i} does not hold. Then, there exist $\eta > 0$ and sequences $\e_k \downarrow 0,~ t_k \in [\rho t^{\e_k}, T],~ x_k \in D$ such that $\alpha_+ - \eta > \alpha > \alpha_- + \eta$ and 
\begin{align}\label{eqn_asymproof_1}
	\left|
		u^{\e_k}(x_k, t_k) 
		- U_0
		\left(
			\dfrac{d^{\e_k}(x_k, t_k)}{\e_k}
		\right)
	\right|
	\geq \eta .
\end{align}
For the inequality \eqref{eqn_asymproof_1} to hold, by Theorem \ref{Thm_Propagation} and $U_0(\pm \infty) = \alpha_\pm$, we need
\begin{align*}
	d^{\e_k}(x_k, t_k) = \mathcal{O}(\e_k).
\end{align*}
With these observations, and also by Theorem \ref{Thm_Propagation}, there exists a positive constant $\tilde{C}$ such that 
\begin{align}\label{eqn_asymproof_2}
	| d(x_k,t_k) | \leq \tilde{C} \e_k
\end{align}
for $\e_k$ small enough.

 If $x_k \in \Gamma^{\e_k}_{t_k}$, then the left-hand side of \eqref{eqn_asymproof_1} vanishes, which contradicts this inequality. Since the sign can either be positive or negative, by extracting a subsequence if necessary we may assume that
\begin{align}\label{eqn_asymproof_7}
	u^{\e_k}(x_k,t_k) - \alpha > 0~~ \text{for all}~~ k \in \mathbb{N},
\end{align}
which is equivalent to 
\begin{align*}
	d^{\e_k}(x_k,t_k) > 0 ~~ \text{for all}~~ k \in \mathbb{N}.
\end{align*}

By \eqref{eqn_asymproof_2}, each $x_k$ has a unique orthogonal projection $p_k := p(x_k,t_k) \in \Gamma_{t_k}$. Let $y_k$ be a point on $\Gamma^{\e_k}_{t_k}$ that has the smallest distance from $x_k$, and therefore $u^{\e_k}(y_k,t_k) = \alpha$. Moreover, we have
\begin{align}\label{eqn_asymproof_3}
	u^{\e_k}(x, t_k) > \alpha
	~~ \text{if}
	~~ \Vert x - x_k \Vert < \Vert y_k - x_k \Vert.
\end{align}

We now rescale $u^{\e_k}$ around $(p_k,t_k)$.  Define 
\begin{align}\label{eqn_asymproof_10}
	w^k(z, \tau) 
	:=
	u^{\e_k}(p_k + \e_k \mathcal{R}_kz, t_k + \e^2_k \tau),
\end{align}
where $\mathcal{R}_k$ is a orthogonal matrix in $SO(N,\R)$ that rotates the $\zn$ axis, namely the vector $(0, \cdots ,0, 1) \in \R^N$  onto the unit normal vector to $\Gamma_{t_k}$ at $p_k \in \Gamma_{t_k}$, say $\dfrac{x_k - p_k}{\Vert x_k - p_k \Vert}$. To prove our result, we use Theorem \ref{Thm_Propagation} which gives information about $u^{\e_k}$ for $t_k + \e^2_k \tau \geq t^{\e_k}$.
 Then, since $\Gamma_t$ is separated from $\partial D$ by some positive distance, $w^k$ is well-defined at least on the box 
\begin{align*}
	B_k
	:=
	\left\{
		(z, \tau) \in \R^N \times \R
		:
		|z| \leq \dfrac{c}{\e_k}
		,~~
		- (\rho - 1) \dfrac{{|\ln \e_k|}}{\mu}
		\leq \tau
		\leq \dfrac{T - T_1}{\e_k^2}
	\right\},
\end{align*}
for some $c > 0$. We remark  that $B_k \subset B_{k + 1}, k \in \mathbb{N}$ and $\lim_{k \rightarrow \infty} B_k = \R^N \times \R$.  Writing $\mathcal{R}_k = (r_{ij})_{1 \leq i,j \leq N}$, we remark that $\mathcal{R}_k^{-1} = \mathcal{R}_k^T$, which implies that
\begin{align}\label{eqn_asymproof_9}
	\sum_{i = 1}^N r_{ \ell i }^2 = 1
	,~  
	\sum_{i = 1, j \neq m}^N r_{j i} r_{\ell i} = 0.
\end{align}
Since
\begin{align*}
	\partial_{z_i}^2 \vp(w^k)
	=
	\e_k^2 \sum_{j = 1}^N  \sum_{\ell = 1}^N r_{j i} r_{\ell i} \partial_{x_\ell x_j} \vp(u^{\e_k}),
\end{align*}
we have
\begin{align*}
	\Delta \vp(w^k)
	&=
	\e_k^2 \sum_{i = 1}^N \partial_{z_i}^2 \vp(w^k)
	\\
	&=
	\e_k^2 \sum_{i = 1}^N  \sum_{\ell = 1}^N r_{\ell i}^2 \partial_{x_\ell}^2 \vp(u^{\e_k})
	+
	\e_k^2 \sum_{i = 1}^N \sum_{j,\ell = 1, j \neq \ell}^N  r_{j i} r_{\ell i} \partial_{x_\ell x_j} \vp(u^{\e_k})
	\\
	&
	=\e_k^2 \Delta \vp(u^{\e_k}).
\end{align*}
Thus, we obtain
\begin{align*}
	w^k_\tau = \Delta \vp(w^k) + f(w^k)
	~~ \text{in}
	~~ B_k.
\end{align*}
From the propagation result in Theorem \ref{Thm_Propagation} and the fact that the rotation matrix $\mathcal{R}_k$ maps the $\zn$ axis to the unit normal vector of $\Gamma_t$ at $p_k$, there exists a constant $C > 0$ such that 
\begin{align}\label{eqn_asymproof_4}
	\zn \geq C \Rightarrow w^k(z,\tau) \geq \alpha_+ - \eta > \alpha
	,~~
	\zn \leq -C \Rightarrow w^k(z,\tau) \leq \alpha_- + \eta < \alpha
\end{align}
as long as $(z,\tau) \in B_k$.

It follows from the first line of \eqref{thm_propagation_1} that $\alpha_- - \eta_0 \leq w^k \leq \alpha_+ + \eta_0$ for $k$ large enough. Then, by Lemma \ref{lem_locregularity} we can find a subsequence of $(w^k)$ converging to some $w \in C^{2,1}(\R^N \times \R)$ which satisfies 
\begin{align*}
	w_\tau 
	= \Delta \vp(w) + f(w)
	~~ on
	~~ \R^N \times \R.
\end{align*}
From Remark \ref{rmk_thm13} we can deduce \eqref{lem_entire_1}.  Then, by Lemma \ref{lem_entire}, there exists $z^* \in \R$ such that 
\begin{align}\label{eqn_asymproof_5}
	w(z, \tau)
	=
	U_0(\zn - z^*).
\end{align}

Define sequences of points $\{ z_k \}, \{ \tilde{z}_{k} \}$ by
\begin{align}\label{eqn_asymproof_11}
	z_k := 
	\dfrac{1}{\e_k} \mathcal{R}^{-1}_k(x_k - p_k), ~~
	\tilde{z}_{k} := 
	\dfrac{1}{\e_k} \mathcal{R}^{-1}_k(y_k - p_k).
\end{align}
From \eqref{eqn_asymproof_2} and Theorem \ref{Thm_Propagation}, we have 
\begin{align*}
	&|d(x_k,t_k)|
	=
	\Vert x_k - p_k \Vert 
	= \mathcal{O}(\e_k)
	,~~\\
	&
	\Vert y_k - p_k \Vert 
	\leq 
	\Vert y_k - x_k \Vert 
	+ \Vert x_k - p_k \Vert 
	=
	|d^{\e_k}(x_k,t_k)|
	+
	|d(x_k,t_k)|
	= \mathcal{O}(\e_k)
\end{align*}
(see Figure \ref{fig1}), which implies that the sequences $z_k$ and $\tilde{z}_k$ are bounded. 
Thus, there exist subsequences of $\{ z_k \}, \{ \tilde{z}_k \}$ and $z_\infty, \tilde{z}_\infty \in \R^N$ such that
\begin{align*}
	z_{k_n} \rightarrow z_\infty
	,~~
	\tilde{z}_{k_n} \rightarrow \tilde{z}_\infty
	,~~
	\text{as}~~ k \rightarrow \infty.
\end{align*}

\begin{figure}[h]
\centering
\begin{subfigure}[b]{0.4\textwidth}
	\includegraphics[scale = 0.4]{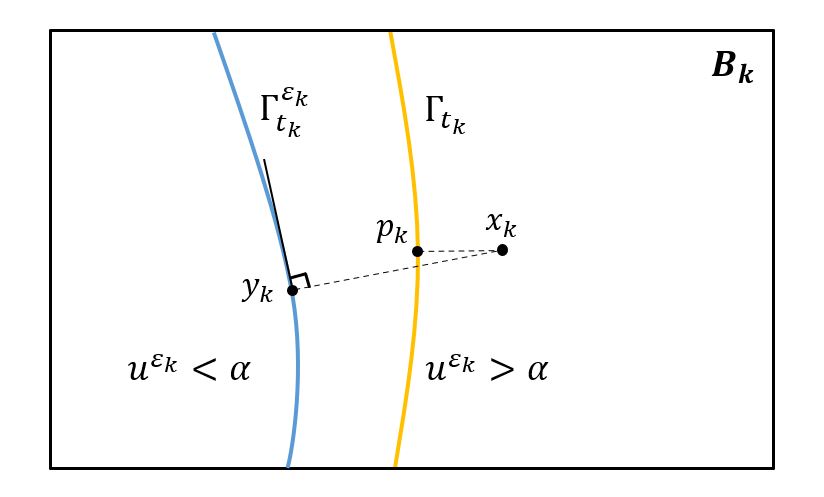}
	\caption{Points $x_k, y_k, p_k$ and interfaces $\Gamma_{t_k}, \Gamma_{t_k}^{\e_k}$ inside the box $B_k$.}
\end{subfigure}
\begin{subfigure}[b]{0.4\textwidth}
	\includegraphics[scale = 0.4]{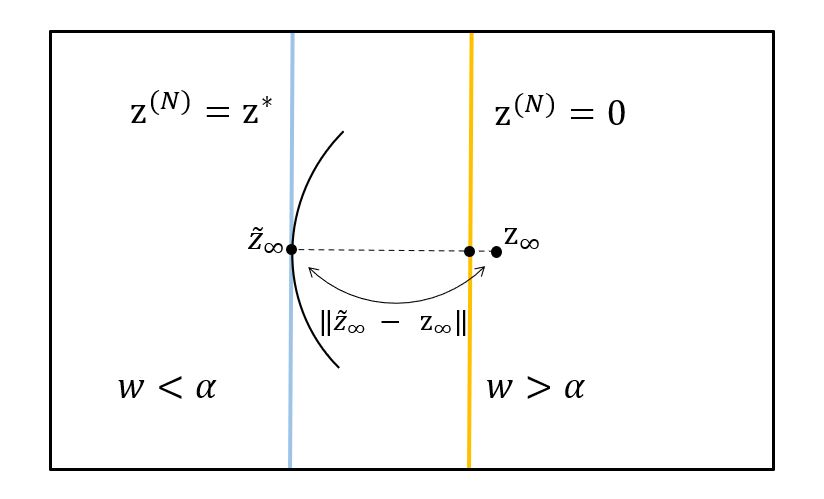}
	\caption{Points $z_\infty$ and $\tilde{z}_\infty$ and hyperplanes $\zn = z^*, \zn = \zn_\infty$.}
\end{subfigure}
\caption{In (a) the distance between $\Gamma_{t_k}$ and $\Gamma_{t_k}^{\e_k}$ is of $\mathcal{O}(\e_k)$. In (b), since we rescale space by $\e^{-1}$, the distance between two hyperplanes is of $\mathcal{O}(1)$.}\label{fig1}
\end{figure}

Since the normal vector to $\Gamma_{t_k}$ at $p_k$ is equal to $x_k - p_k$, and the mapping $\mathcal{R}_k^{-1}$ sends the unit normal vector to $\Gamma_{t_k}$ at $p_k$ to the vector $(0, \cdots 0, 1) \in \R^N$, we conclude $z_\infty$ must lie on the $\zn$ axis so that we can write
\begin{align*}
	z_\infty = (0, \cdots, 0, \zn_\infty).
\end{align*}
Since, by \eqref{eqn_asymproof_7},
\begin{align*}
	w(z_\infty, 0) 
	=
	\lim_{k_n \rightarrow \infty}
	w^{k_n}(z_{k_n}, 0)
	=
	\lim_{k_n \rightarrow \infty}
	u^{\e_{k_n}}(x_{k_n}, t_{k_n})
	\geq \alpha,
\end{align*}
we deduce from \eqref{eqn_asymproof_5} and the fact that $U_0$ is an increasing function that
\begin{align*}
	w(z_\infty, 0)
	= U_0(\zn_{\infty} - z^*) \geq \alpha
	\Rightarrow
	\zn_{\infty} \geq z^*.
\end{align*}

From the definition of $y_{k_n}$ and \eqref{eqn_asymproof_10}, we have 
\begin{align}\label{eqn_asymproof_6}
	w(\tilde{z}_\infty, 0) 
	= \lim_{k \rightarrow \infty}  w^{k_n}(\tilde{z}_{k_n}, 0)
	= \lim_{k \rightarrow \infty} u^{\e_{k_n}}(y_{k_n}, t_{k_n})
	= \alpha.
\end{align}

Next, we show that
\begin{align}\label{eqn_asymproof_8}
	w(z,0) \geq \alpha~ \text{if} ~
	\Vert z - z_\infty \Vert \leq \Vert \tilde{z}_\infty - z_\infty \Vert.
\end{align}
Choose $z \in \R^N$ satisfying $\Vert z - z_\infty \Vert \leq \Vert \tilde{z}_\infty - z_\infty \Vert$ and a sequence $a_{k_n} \in \R^+$ such that $a_{k_n} \rightarrow \Vert z - z_\infty \Vert$ and $\e_{k_n} a_{k_n} \leq \Vert x_{k_n} - y_{k_n} \Vert $ as $k \rightarrow \infty$. Then, we define sequences $n_{k_n}$ and $b_{k_n}$ by
\begin{align*}
	n_{k_n} = \dfrac{z - z_{k_n}}{\Vert z - z_{k_n} \Vert}
	,~
	b_{k_n} = a_{k_n} n_{k_n} + z_{k_n}.
\end{align*}
Note that $b_{k_n} \rightarrow z$ as $k \rightarrow \infty$. Then, by \eqref{eqn_asymproof_11}, we obtain
\begin{align*}
	w(z,0) 
	&= \lim_{k_n \rightarrow \infty} w^{k_n}(b_{k_n}, 0)=
	\lim_{k_n \rightarrow \infty} 
	u^{\e_{k_n}} ( p_{k_n} + \e_{k_n} \mathcal{R}_{k_n}(a_{k_n}  k_n + z_{k_n}),t_{k_n})
	\\
	&	
	= \lim_{k _n\rightarrow \infty} u^{\e_{k_n}}(\e_{k_n} a_{k_n} \mathcal{R}_{k_n} n_{k_n} + x_{k_n}, t_{k_n})
	\geq \alpha,
\end{align*}
where the last inequality holds by \eqref{eqn_asymproof_3}.

Note that \eqref{eqn_asymproof_5} implies $\{ w = \alpha \} = \{(z,\tau) \in \R^N \times \R, \zn = z^* \}$.
Thus, we have either $z_\infty = \tilde{z}_\infty$ or, in view of \eqref{eqn_asymproof_5} , \eqref{eqn_asymproof_6} and \eqref{eqn_asymproof_8}, that the ball of radius $|| \tilde{z}_\infty - z_\infty ||$ centered at $z_\infty$ is tangent to the hyperplane $\zn = z^*$ at $\tilde{z}_\infty$. Hence, 
$\tilde{z}_\infty$ is a point on $\zn$ axis. With this observation and \eqref{eqn_asymproof_5}, we have
\begin{align*}
	\tilde{z}_\infty
	= (0, \cdots, 0, z^*).
\end{align*}
This last property implies
\begin{align}\label{eqn_asymproof_12}
	\dfrac{d^{\e_{k_n}}(x_{k_n},t_{k_n})}{\e_{k_n}} 
	= \dfrac{\Vert x_{k_n} - y_{k_n} \Vert}{\e_{k_n}}
	= \Vert \mathcal{R}_{k_n} \left( z_{k_n} - \tilde{z}_{k_n} \right) \Vert
	= \Vert z_{k_n} - \tilde{z}_{k_n} \Vert
	\rightarrow 
	\Vert  z_\infty - \tilde{z}_\infty \Vert
	= \zn_\infty - z^*.
\end{align}
We have therefore reached a contradiction since, by \eqref{eqn_asymproof_2}, \eqref{eqn_asymproof_5} and \eqref{eqn_asymproof_12},
\begin{align*}
	0 
	&=
	\vert w(z_\infty, 0) - U_0 (\zn_\infty - z^*) \vert
	\\
	&= 
	\left\vert \lim_{k_n \rightarrow \infty}
	\left[
	 w^{k_n}(z_{k_n}, 0) 
	- U_0 
	\left(
		\dfrac{d^{\e_{k_n}}(x_{k_n},t_{k_n})}{\e_{k_n}}
	\right)
	\right]
	\right\vert
	\\
	&= 
	\left\vert \lim_{k_n \rightarrow \infty}
	\left[
	 u^{\e_{k_n}}(x_{k_n},t_{k_n}) 
	- U_0 
	\left(
		\dfrac{d^{\e_{k_n}}(x_{k_n},t_{k_n})}{\e_{k_n}}
	\right)
	\right]
	\right\vert,
\end{align*} 
contradicting \eqref{eqn_asymproof_1}.

For the proof of $(ii)$, we use the same method as in \cite{MH2012}. \hfill \qed

\section*{Appendix: Mobility and surface tension}

Mobility is defined as a linear response of the speed of traveling wave to the 
external force.  More precisely, motivated by (4.1) and (4.2) in \cite{Spohn1993}, let us consider
the nonlinear Allen-Cahn equation with external force $\delta$ on $\R$
for small enough $|\delta|$:
\begin{equation}  \label{eq:AC-delta}
u_t = \varphi(u)_{zz} +f(u)+\delta, \quad z\in \R,
\end{equation}
and corresponding traveling wave solution $U=U_\delta(z)$ with speed $c(\delta)$:
\begin{align}  \label{eq:TW-delta}
	& \vp(U_\delta)_{zz} +c(\delta) U_{\delta z}+f(U_\delta)+\delta=0, ~~ z\in \R,  \\
	& U_\delta(\pm\infty)= \alpha_{\pm, \delta},
\notag
\end{align}
where $\alpha_{\pm,\delta}$ are two stable solutions of $f(u)+\delta=0$.
Then, we define the mobility by 
$$
\mu_{AC}:=-\frac{c'(0)}{ \a_+- \a_-},
$$
with a normalization factor $\a_+-\a_-$ as in \cite{Spohn1993}; compare (4.6) and (4.7) in 
\cite{Spohn1993} noting that the boundary conditions at $\pm\infty$ are switched so that
we have a negative sign for $\mu_{AC}$.

To derive a formula for $\mu_{AC}$, we multiply $\fa(U_\de)_z$ to 
\eqref{eq:TW-delta} and integrate it over $\R$ to obtain
\begin{align}  \label{eq:94}
c(\de) \int_\R U_{\de z} \fa(U_\de)_z dz + \de (\fa(\a_+)-\fa(\a_-)) =O(\de^2),
\end{align}
by noting that
\begin{align*}
& \int_\R \fa(U_{\de})_{zz} \fa(U_\de)_z dz = \frac12 \int_\R 
\big\{\big( \fa(U_{\de})_{z} \big)^2 \big\}_z dz =0, \\
& \int_\R \fa(U_{\de})_{z} dz = \fa(\a_{+,\de})-\fa(\a_{-,\de}) = \fa(\a_+)-\fa(\a_-)+O(\de), \\
&  \int_\R f(U_{\de}) \fa(U_\de)_z dz =  \int_\R f(U_{\de}) \fa'(U_\de) U_{\de z} dz 
= - \int_{\a_{-,\de}}^{\a_{+,\de}} W'(u)du = O(\de^2). 
\end{align*}
The last line follows by the change of variable $u=U_\de(z)$, $W'(u) = -f(u)\fa'(u)$
(recall  \eqref{eq:28}),
$\int_{\a_{-}}^{\a_{+}} W'(u)du=0$ and $W'(\a_\pm)=0$, $W'\in C^1$.
However, since one can at least formally expect 
$U_\de = U_0+O(\de)$ (recall \eqref{eqn_AsymptExp_U0} for $U_0$), by \eqref{intrinsic},
\begin{align*}
\int_\R U_{\de z} \fa(U_\de)_z dz
& = \int_\R U_{0 z} \fa(U_0)_z dz + O(\de) \\
& = \int_\R U_{0 z} \sqrt{2W(U_0(z))} dz + O(\de) \\
& = \int_{\a_-}^{\a_+} \sqrt{2W(u)} du + O(\de),
\end{align*}
by the change of variable $u=U_0(z)$ again.  This combined with \eqref{eq:94} leads to
\begin{align*}
c'(0) = - \frac{\fa(\a_+)-\fa(\a_-)}
{\int_{\a_-}^{\a_+} \sqrt{2W(u)} du}.
\end{align*}
Thus, the mobility is given by the formula
\begin{align}  \label{mobility}
\mu_{AC} =  \frac{ \fa_\pm^*}{\int_{\a_-}^{\a_+} \sqrt{2W(u)} du}
= \frac{ \fa_\pm^*}{\int_\R \fa'(U_0) U_{0z}^2(z)dz},
\end{align}
where
$$
\fa_\pm^* = \frac{\fa(\a_+)-\fa(\a_-)}{\a_+ - \a_-}.
$$

On the other hand, surface tension is defined as a gap of the energy
of the microscopic transition surface from $\a_-$ to $\a_+$ in normal
direction and that of the constant profile $\a_-$ or $\a_+$.  More precisely,
define the energy of a profile $u= \{u(z)\}_{z\in \R}$ by
$$
\mathcal{E}(u) = \int_\R \Big\{ \frac12\big(\fa(u)_z\big)^2 +W(u)\Big\} dz.
$$
Recall that the potential $W$ is defined by \eqref{eq:28}, and $W\ge 0$
and $W(\a_\pm)=0$ hold.  In particular, $W$ is normalized as $\min_{u\in \R}W(u)=0$
so that $\min_{u =u(\cdot)}\mathcal{E}(u)=0$.
Then, the surface tension is defined as
$$
\si_{AC} := \frac1{\fa_\pm^*} \min_{u: u(\pm \infty)=\a_\pm} \mathcal{E}(u),
$$
by normalizing the energy by $\fa_\pm^*$.  We observe 
 that $\mathcal{E}$ is defined through $\fa$.

Note that the nonlinear Allen-Cahn equation, that is \eqref{eq:AC-delta}
with $\de=0$, is a distorted gradient flow associated with $\mathcal{E}(u)$:
$$
u_t = - \frac{\de \mathcal{E}(u)}{\de\fa(u)}, \quad z \in \R,
$$
where the right hand side is defined as a functional derivative of
$\mathcal{E}(u)$ in $\fa(u)$, which is given by
$$
\frac{\de \mathcal{E}(u)}{\de\fa(u)} = - \fa(u)_{zz} -f(u(z)).
$$
Indeed, to see the second term $-f(u(z))$, setting $v=\fa(u)$, one can rewrite
$W(u) = W(\fa^{-1}(v))$ as a function of $v$ so that
\begin{align*}
\big(W(\fa^{-1}(v))\big)' & = W'(\fa^{-1}(v)) \big(\fa^{-1}(v)\big)'  \\
& = - f(\fa^{-1}(v)) \fa'(\fa^{-1}(v)) \frac1{\fa'(\fa^{-1}(v))}  \\
& = - f(\fa^{-1}(v)) = -f(u).
\end{align*}

We call the flow ``distorted'', since the functional derivative is taken in $\fa(u)$ 
and not in $u$.  One can rephrase this in terms of the change of
variables $v(z) = \fa(u(z))$.  Indeed, we have $\mathcal{E}(u)=
\widetilde{\mathcal{E}}(v)$ under this change, where
$$
\widetilde{\mathcal{E}}(v) = \int_\R \Big\{ \frac12 v_z^2 + W(\fa^{-1}(v))\Big\}dz,
$$
and
$$
\frac{\de \widetilde{\mathcal{E}}}{\de v}
= -v_{zz} - f(\fa^{-1}(v)) = -v_{zz} -g(v).
$$
Therefore, in the variable $v(z)$, the nonlinear Allen-Cahn equation can be rewritten as
\begin{align*}
v_t = \fa'(u) u_t = - \fa'(\fa^{-1}(v)) \cdot \frac{\de \widetilde{\mathcal{E}}}{\de v}
= \fa'(\fa^{-1}(v)) \big\{ v_{zz} +g(v)\}.
\end{align*}
This type of distorted equation for $v$ is sometimes called Onsager 
equation; see \cite{Mi}.

Now we come back to the computation of the surface tension $\si_{AC}$.
In fact, it is given by
\begin{align}  \label{eq:ST}
\si_{AC}= \frac1{\fa_\pm^*}  \int_\R V_{0z}^2 dz = 
\frac1{\fa_\pm^*}
\int_{\a_-}^{\a_+} \fa'(u) \sqrt{2W(u)} du,
\end{align}
where $V_0=\fa(U_0)$ and satisfies \eqref{eqn_AsymptExp_V0}.
Indeed, the second equality follows from \eqref{eq:29}.  To see the first equality,
by definition,
\begin{align*}
\si_{AC} =  \frac1{\fa_\pm^*} \min_{u: u(\pm \infty)=\a_\pm} \mathcal{E}(u)
=  \frac1{\fa_\pm^*}
\min_{v: v(\pm \infty)= \fa(\a_\pm)} \widetilde{\mathcal{E}}(v)
\end{align*}
and the minimizers of $\widetilde{\mathcal{E}}$ under the condition
$v(\pm \infty)= \fa(\a_\pm)$ are given by $V_0$ and its spatial shifts.  Thus,
$$
\si_{AC} =  \frac1{\fa_\pm^*} \widetilde{\mathcal{E}}(V_0) 
= \frac1{\fa_\pm^*} \int_\R \Big\{\frac12 V_{0z}^2 + W(\fa^{-1}(V_0)) \Big\} dz.
$$
However, since $V_{0z}= \sqrt{2W(U_0(z))}$ by \eqref{intrinsic},
we have $\int_\R W(\fa^{-1}(V_0)) dz = \int_\R \frac12 V_{0z}^2 dz$.
In particular, this implies the first equality of \eqref{eq:ST}.

By \eqref{second lambda_0} combined with \eqref{mobility} and
\eqref{eq:ST}, we see that $\la_0= \mu_{AC}\si_{AC}$.

\begin{rmk}
The linear case  $\fa(u) = \frak{K} u$ is discussed by Spohn \cite{Spohn1993},
in which $\frak{K}$ is denoted by $\kappa$.  In this case, since $\fa'=\frak{K}$
and $\fa_\pm^*=\frak{K}$, by \eqref{mobility} and \eqref{eq:ST}, we have
$\mu_{AC} = \big[\int_\R U_{0z}^2 dz\big]^{-1}$ and $\si_{AC} = \frak{K}\int_\R U_{0z}^2 dz$.
These formulas coincide with (4.7) and (4.8) in \cite{Spohn1993} by noting that
$U_0$ is the same as $w$ in \cite{Spohn1993} in the linear case except that
the direction is switched due to the choice of the boundary conditions.
\end{rmk}

\section*{Acknowledgement}
The authors are grateful to the professors Henri Berestycki and Francois Hamel for a useful discussion. 
P. El Kettani, D. Hilhorst and H.J. Park thank IRN ReaDiNet as well as the French-Korean project STAR.
 T. Funaki was supported in part by JSPS KAKENHI, Grant-in-Aid for Scientific Researches (A) 18H03672 and (S) 16H06338, and thanks also IRN ReaDiNet.
S. Sethuraman was supported by grant ARO W911NF-181-0311, a Simons Foundation Sabbatical grant, and by a JSPS Fellowship, and thanks Waseda U. for the kind hospitality during a sabbatical visit.

\end{document}